\documentclass[draft,reqno]{amsproc}
\usepackage{amsmath}
\usepackage{amsfonts}
\usepackage{mathrsfs}
\usepackage[dvips]{graphicx}
\usepackage{amssymb}
\usepackage{amsbsy}
\usepackage{amsthm}
\usepackage{graphics}
\usepackage{color}
\usepackage{textcomp}
\usepackage{verbatim}

\usepackage{tikz}
\usetikzlibrary{arrows.meta,calc}
\makeatletter
\@namedef{subjclassname@2010}{%
\textup{2010} Mathematics Subject
Classification} \makeatother

\DeclareMathOperator{\supp}{supp}

\DeclareMathOperator{\dfi}{\mathscr{D}}

\numberwithin{equation}{section}

\newtheorem{theorem}{Theorem}[section]
\newtheorem*{theorem*}{Theorem}
\newtheorem{corollary}[theorem]{Corollary}
\newtheorem{prop}[theorem]{Proposition}

\newtheorem*{prob*}{Problem}

\newtheorem{lemma}[theorem]{Lemma}

\newtheorem*{manualtheorem}{Theorem~$2.2'$}

\theoremstyle{remark}
\newtheorem{rem}[theorem]{Remark}
\newtheorem{df}[theorem]{Definition}

\newcommand*{\ascr}{\mathscr{A}}

\newcommand*{\hh}{\mathcal{H}}
\newcommand*{\is}[2]{\langle#1,#2\rangle}
\newcommand*{\kk}{\mathcal{K}}
\newcommand*{\lcal}{\mathcal{L}}
\newcommand*{\lambdab}{\boldsymbol{\lambda}}
\newcommand*{\natu}{\mathbb{N}}
\newcommand*{\mcal}{\mathcal{M}}

\newcommand*{\nbold}{\boldsymbol{N}}
\newcommand*{\tbold}{\boldsymbol{T}}
\newcommand*{\zbold}{\boldsymbol{z}}

\newcommand*{\nul}{\mathscr{N}}
\newcommand*{\comp}{\mathbb{C}}
\newcommand*{\borel}{\mathfrak{B}}

\newcommand*{\cbb}{\comp}

\newcommand*{\D}{\mathrm{d\hspace{.1ex}}}

\theoremstyle{definition}
\newtheorem{exa}[theorem]{Example}

\newcommand*{\Le}{\leqslant}
\newcommand*{\Ge}{\geqslant}
\newcommand*{\ran}{\mathscr{R}}

\newcommand*{\ogr}{\boldsymbol{B}}
\newcommand*{\rbb}{\mathbb{R}}
\newcommand*{\zbb}{\mathbb{Z}}
\newcommand*{\tbb}{\mathbb{T}}

   \begin{document}

   \title[Hyperrigidity II:
$R$-dilations, ideals and decompositions]
{Hyperrigidity II: $R$-dilations, ideals and
decompositions}

   \author[P. Pietrzycki and J. Stochel]{Pawe{\l} Pietrzycki and  Jan Stochel}

   \subjclass[2020]{Primary 46L07,
47A20, 47B15; Secondary 46L05}

   \keywords{hyperrigidity, rigidity at
zero, $R$-dilations, ideals, orthogonal
decompositions}

   \address{Wydzia{\l} Matematyki i Informatyki, Uniwersytet
Jagiello\'{n}ski, ul. {\L}ojasiewicza 6,
PL-30348 Krak\'{o}w}

   \email{pawel.pietrzycki@im.uj.edu.pl}

   \address{Wydzia{\l} Matematyki i Informatyki, Uniwersytet
Jagiello\'{n}ski, ul. {\L}ojasiewicza 6,
PL-30348 Krak\'{o}w}

   \email{jan.stochel@im.uj.edu.pl}

   \thanks{The research of both
authors was supported by the National
Science Center (NCN) Grant OPUS No.\
DEC-2024/55/B/ST1/00280.}

   \begin{abstract}
We investigate the hyperrigidity of subsets of unital
$C^*$-algebras annihilated by states (or, more
generally, by completely positive maps). This is
closely related to the concept of rigidity at $0$
introduced by G. Salomon, who studied hyperrigid
subsets of Cuntz and Cuntz-Krieger algebras. The
absence of the unit in a hyperrigid set allows for the
existence of $R$-dilations with non-isometric $R$. The
existence of such an $R$-dilation forces the state
annihilating the hyperrigid set to be a character.
Using a dilation-theoretic approach, we provide
multiple equivalent criteria for hyperrigidity
involving intertwining relations for representations,
valid in both commutative and noncommutative settings.
We develop structural models for such dilations via
orthogonal decompositions into two or three
components, determined by defect operators and
generalized joint eigenspaces associated with
underlying representations.
   \end{abstract}

   \maketitle

   \section{Introduction}
Motivated both by the fundamental role of the
classical Choquet boundary in classical approximation
theory and by the importance of approximation in the
contemporary theory of operator algebras, Arveson
introduced hyperrigidity as a form of approximation
that captures many important operator-algebraic
phenomena. Below, we write $\natu$ for the set of all
positive integers. $\ogr(\hh)$ denotes the
$C^*$-algebra of all bounded linear operators on a
Hilbert space $\hh$, and $I=I_{\hh}$ stands for the
identity operator on $\hh$. Throughout, all Hilbert
spaces are assumed to be complex. By a representation
we mean a unital $*$-homomorphism into some
$\ogr(\hh)$.
   \begin{df} \label{dyfnh}
A finite or countably infinite set $G$
of generators of a unital $C^*$-algebra
$\ascr$ is said to be
\textit{hyperrigid} if for any faithful
representation $\pi\colon \ascr\to
\ogr(\hh)$ on a separable Hilbert space
$\hh$ and for any sequence
$\varPhi_k\colon \ogr(\hh)\to
\ogr(\hh)$ ($k\in \natu$) of unital
completely positive (UCP) maps,
   \begin{align*}
\lim_{k\to\infty}\|\varPhi_k(\pi(g))-\pi(g)\|=0
\; \forall g\in G \implies
\lim_{k\to\infty}\|\varPhi_k(\pi(a))-\pi(a)\|=0
\; \forall a\in \ascr.
   \end{align*}
   \end{df}
Arveson conjectured that hyperrigidity
is equivalent to the noncommutative
Choquet boundary of $G$ being as large
as possible, meaning that every
irreducible representation of $C^*(G)$
is a boundary representation for $G$.
This statement, now known as {\em
Arveson's hyperrigidity conjecture}
(see \cite[Conjecture~4.3]{Arv11}), has
been verified for certain classes of
$C^*$-algebras
\cite{Arv11,Kle14,Brown16,CH18,Sal19,Ka-Sh19,Sh20,P-S-24,Sch24}.
In full generality, however, the
conjecture fails: a counterexample was
recently provided by B. Bilich and A.
Dor-On \cite{Bi-Dor24} (see also
\cite{Bi24}; for a finite-dimensional
example see \cite{Sch25}).
Nevertheless, the conjecture remains
open for commutative $C^*$-algebras. In
recent years, it has attracted
considerable attention across various
areas of operator algebras and operator
theory
\cite{KS15,Kle14,Clo18,Clo18b,CH18,Har19,Cl-Ti21,DK21,CH-Th24,Thom24,P-S25}.

In \cite[Corollary~3.4]{Arv11}, Arveson proved that
for every integer $n \in \natu$, the set of $n$
isometries $G_n = \{V_1, \ldots, V_n\}$ generating the
Cuntz algebra $\mathcal{O}_n$ is hyperrigid. In
contrast, Muhly and Solel showed in \cite{MuhSol98}
that the infinite set of isometries $G_\infty = \{V_1,
V_2, \ldots\}$ generating the Cuntz algebra
$\mathcal{O}_\infty$ is not hyperrigid. Salomon proved
that for every $n\in \natu\cup \{\infty\}$, there
exists a state on $\mathcal{O}_n$ vanishing on $G_n$
(see \cite[Example~3.4]{Sal19}). He also provided
additional examples of hyperrigid sets annihilated by
states in the context of Cuntz-Krieger algebras (see
\cite[Section~5]{Sal19}). This line of research led
Salomon to introduce the concept of rigidity at $0$.
According to \cite[Definition~3.1]{Sal19}, a set $G$
generating a $C^*$-algebra $\ascr$ is said to be {\em
rigid at zero} in $\ascr$ if, for every sequence
$\{\psi_n\}_{n=1}^{\infty}$ of contractive positive
maps $\psi_n\colon \ascr \to \cbb$,
   \begin{align*}
\lim_{n\to\infty} \psi_n(g)=0 \;\; \forall g\in G \;\;
\implies \;\; \lim_{n\to\infty} \psi_n(a)=0 \;\;
\forall a\in \ascr.
   \end{align*}
If $\ascr$ is separable, then $G$ is rigid at $0$ if
and only if the states on $\ascr$ do not annihilate
$G$ (\cite[Theorem~3.3]{Sal19}). In a recent paper
\cite{De-Ka-Pa25}, such a set $G$ is called a
separating set. The distinction between these notions
consists in the fact that in \cite{De-Ka-Pa25} $G$ is
not required to contain the unit of $\ascr$ when
$\ascr$ is unital. One of our main results,
Theorem~\ref{soorh}, establishes a dichotomy that
holds for any hyperrigid set $G$ in a unital
$C^*$-algebra $\ascr$. It asserts that whenever
$\pi\colon \ascr \to \ogr(\hh)$ and $\rho\colon \ascr
\to \ogr(\kk)$ are representations, and $R \in
\ogr(\hh,\kk)$ is a non-isometric contraction such
that $\pi(g) = R^*\rho(g)R$ for all $g \in G$, then
the intertwining relation $R\pi=\rho R$ (i.e.,
$R\pi(a)=\rho(a)R$ for all $a\in \ascr$) holds if $G$
is annihilated by a state on $\ascr$, and it fails if
$G$ is not annihilated by any state on $\ascr$.
Exactly the same dichotomy is valid when the
intertwining relation $R\pi=\rho R$ is replaced by the
condition that $J=\big\{a\in\ascr :
\pi(a)=R^*\rho(a)R\big\}$ is an ideal. The first
alternative of the dichotomy is denoted $(\alpha)$,
and the second $(\beta)$. Alternative $(\beta)$ has
been studied in depth in \cite[Theorem~3.3]{Sal19} and
further developed in \cite{De-Ka-Pa25} (see, e.g.,
Theorem~3.19 therein). The present paper focuses on
alternative $(\alpha)$ of this dichotomy, allowing for
the more general situation in which the hyperrigid set
$G$ is annihilated by a nonzero completely positive
map (see Proposition~\ref{sweq}).

In a recent paper \cite{P-S-24}, we investigated, in
the context of a unital commutative $C^*$-algebra
$\ascr$ generated by a single element $t \in \ascr$,
which sets of the form
   \begin{align*}
G =\big\{t^{*m} t^n\colon (m,n) \in
\varXi\big\}, \quad \varXi \subseteq
\zbb_+^2,
   \end{align*}
are hyperrigid in $\ascr$, where, $\zbb_+$ denotes the
set of all nonnegative integers. We showed that, under
mild constraints on $\varXi$, the assumption that $G$
generates the $C^*$-algebra $\ascr$ implies that $G$
is hyperrigid (see \cite[Theorem~2.3]{P-S-24}). As a
consequence, we obtained two criteria for the
hyperrigidity of $G$: one depending on the geometry of
the spectrum of the generating element $t$, and the
other independent of it (see
\cite[Theorem~2.4]{P-S-24}). In particular, this
implies that for any character $\chi$ of a singly
generated commutative unital $C^*$-algebra $\ascr$,
there exists a finite hyperrigid set $G \subseteq
\ascr$ with $G \subseteq \ker \chi$ (see
Proposition~\ref{chyr}).

Taken together, these results provide motivation for
investigating the hyperrigidity of sets $G$ from the
perspective of states that annihilate them.
   \section{Main results}
We begin by recalling the following dilation-based
characterizations of the hyperrigidity of subsets of a
unital $C^*$-algebra. The equivalence
(i)$\Leftrightarrow$(ii) follows from the unique
extension property (see \cite[Theorem~2.1]{Arv11}) and
the Stinespring dilation theorem (see
\cite[Theorem~1]{Sti55}), upon observing that the
Hilbert space $\kk$ in conditions (ii) and (iii) may
be taken to be separable (see the proof of
Theorem~\ref{ogylu}). In turn, the equivalence
(ii)$\Leftrightarrow$(iii) is an immediate consequence
of \cite[Lemma~3.2]{P-S-roots23}. Throughout the
paper, $\ogr(\hh,\kk)$ denotes the Banach space of all
bounded linear operators from a Hilbert space $\hh$
into a Hilbert space $\kk$.
   \begin{theorem}   \label{delt}
   Let $G$ be a finite or countably infinite set of
generators of a unital $C^*$-algebra $\ascr$. Then the
following conditions are equivalent{\em :}
   \begin{enumerate}
   \item[(i)] $G$ is hyperrigid,
   \item[(ii)]
for all separable Hilbert spaces $\hh$ and $\kk$, all
representations $\pi\colon \ascr \to \ogr(\hh)$ and
$\rho\colon \ascr \to \ogr(\kk)$, and every isometry
$V\in \ogr(\hh,\kk)$,
   \begin{align*}
\pi(g) = V^*\rho(g)V \;\: \forall g\in G \implies
\pi(a) = V^*\rho(a)V \;\: \forall a\in \ascr,
   \end{align*}
   \item[(iii)]
for all separable Hilbert spaces $\hh$
and $\kk$, all representations
$\pi\colon \ascr \to \ogr(\hh)$ and
$\rho\colon \ascr \to \ogr(\kk)$, and
every isometry $V\in \ogr(\hh,\kk)$,
   \begin{align} \label{wvzq}
\pi(g) = V^*\rho(g)V \;\: \forall g\in
G \implies V\pi (a) = \rho(a) V \;\:
\forall a\in \ascr.
   \end{align}
   \end{enumerate}
Moreover, if $G$ is hyperrigid, then $\pi(a) =
V^*\rho(a)V$ for all $a\in \ascr$.
   \end{theorem}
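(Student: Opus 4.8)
The plan is to prove Theorem~\ref{delt} by establishing the chain (i)$\Rightarrow$(ii)$\Rightarrow$(iii)$\Rightarrow$(i), together with the final assertion, following the two references indicated in the surrounding text. For (i)$\Rightarrow$(ii), suppose $G$ is hyperrigid and let $\pi,\rho,V$ be as in (ii) with $\pi(g)=V^*\rho(g)V$ for all $g\in G$. Define $\varPhi\colon\ogr(\hh)\to\ogr(\hh)$ by $\varPhi(X)=V^*\rho(\pi^{-1}(X))V$ — this uses faithfulness of $\pi$, which we may assume by replacing $\pi$ with a faithful representation on a separable Hilbert space and amplifying $\rho$ and $V$ accordingly (the amplification argument is routine and is where one checks that $\kk$ may be taken separable, as referenced in the proof of Theorem~\ref{ogylu}). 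The map $\varPhi$ is UCP since $\rho$ is a $*$-homomorphism, $V$ is an isometry, and $X\mapsto V^*XV$ is UCP. By hypothesis $\varPhi(\pi(g))=\pi(g)$ for all $g\in G$, so taking the constant sequence $\varPhi_k=\varPhi$ in Definition~\ref{dyfnh} forces $\varPhi(\pi(a))=\pi(a)$, i.e.\ $\pi(a)=V^*\rho(a)V$ for all $a\in\ascr$. This simultaneously yields the ``Moreover'' clause.

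For (ii)$\Rightarrow$(iii), assume (ii) and let $\pi,\rho,V$ satisfy $\pi(g)=V^*\rho(g)V$ for all $g\in G$; by (ii) we get $\pi(a)=V^*\rho(a)V$ for all $a\in\ascr$. Now invoke \cite[Lemma~3.2]{P-S-roots23}: the identity $\pi(a^*a)=V^*\rho(a^*a)V$ combined with $\pi(a)=V^*\rho(a)V$ is precisely the situation in which the Schwarz-type inequality $V^*\rho(a)^*\rho(a)V\Ge(V^*\rho(a)V)^*(V^*\rho(a)V)$ becomes an equality, which forces $\rho(a)V=V V^*\rho(a)V=V\pi(a)$; thus $V\pi(a)=\rho(a)V$ for all $a\in\ascr$. (Concretely: $\|\rho(a)Vh-V\pi(a)h\|^2=\is{\rho(a^*a)Vh}{Vh}-\is{\pi(a)h}{\pi(a)h}-\ldots=0$ after expanding and using the two operator identities, since $\pi(a^*a)=\pi(a)^*\pi(a)$ as $\pi$ is a $*$-homomorphism.) For (iii)$\Rightarrow$(i), we argue via the unique extension property as in \cite[Theorem~2.1]{Arv11}: given a faithful representation $\pi$ and UCP maps $\varPhi_k$ with $\varPhi_k(\pi(g))\to\pi(g)$, it suffices to show any weak-$*$ cluster point $\varPhi$ of the $\varPhi_k$ satisfies $\varPhi\circ\pi=\pi$; dilating $\varPhi\circ\pi$ by Stinespring gives a representation $\rho$ on a separable $\kk$ and an isometry $V$ with $\varPhi(\pi(a))=V^*\rho(a)V$, hence $\pi(g)=V^*\rho(g)V$ on $G$, so (iii) gives $V\pi=\rho V$, whence $\varPhi(\pi(a))=V^*\rho(a)V=V^*V\pi(a)=\pi(a)$; a standard argument then upgrades cluster-point agreement to norm convergence.

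The main obstacle, and the step deserving the most care, is the reduction to separable Hilbert spaces in (i)$\Rightarrow$(ii) and the verification that the dilation spaces arising in (iii)$\Rightarrow$(i) can be taken separable, so that Definition~\ref{dyfnh} (which is stated only for separable $\hh$) actually applies. The resolution is the standard observation that for a countably generated $\ascr$ and a fixed separable input, all the operators in play lie in a separable $C^*$-subalgebra, and one may cut down $\kk$ to the closed span of $\rho(\ascr)V\hh$, which is separable; this is exactly the remark the authors defer to the proof of Theorem~\ref{ogylu}. Beyond that, the genuinely substantive input is external — Arveson's unique extension property and the Stinespring theorem for (i)$\Leftrightarrow$(ii), and \cite[Lemma~3.2]{P-S-roots23} for (ii)$\Leftrightarrow$(iii) — so the proof is essentially an assembly of these tools rather than a new argument.
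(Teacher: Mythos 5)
Your overall architecture — the chain (i)$\Rightarrow$(ii)$\Rightarrow$(iii)$\Rightarrow$(i), with (ii)$\Rightarrow$(iii) done by the equality case of the Schwarz inequality — matches the paper, which disposes of (ii)$\Leftrightarrow$(iii) by citing \cite[Lemma~3.2]{P-S-roots23} and of (i)$\Leftrightarrow$(ii) by citing Arveson's unique extension property theorem \cite[Theorem~2.1]{Arv11} together with Stinespring and the separability reduction. Your (ii)$\Rightarrow$(iii) computation is correct, and your separability/minimality remarks and the faithfulness amplification in (i)$\Rightarrow$(ii) are fine. The trouble is that you do not treat \cite[Theorem~2.1]{Arv11} as a black box but attempt to re-derive the equivalence of hyperrigidity with the UEP directly from Definition~\ref{dyfnh}, and that is where a genuine gap appears.

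Concretely, in (iii)$\Rightarrow$(i) you assert that it suffices to show that every weak-$*$ cluster point $\varPhi$ of the sequence $(\varPhi_k)$ satisfies $\varPhi\circ\pi=\pi$, and that ``a standard argument then upgrades cluster-point agreement to norm convergence.'' No such standard upgrade exists: BW-compactness plus the cluster-point identity only gives $\varPhi_k(\pi(a))\to\pi(a)$ in the weak (and, via the Schwarz inequality, the strong) operator topology, whereas Definition~\ref{dyfnh} demands convergence in the operator norm; bridging exactly this gap is the nontrivial content of \cite[Theorem~2.1]{Arv11}, whose proof does not proceed by compactness of the UCP maps but (in effect) by passing to the quotient of $\ell^\infty(\ogr(\hh))$ by the ideal of norm-null sequences and applying the UEP-type hypothesis there. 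So either cite \cite[Theorem~2.1]{Arv11} outright, as the paper does, or reproduce that argument; as written the step fails. A secondary, fixable defect occurs in (i)$\Rightarrow$(ii): your map $\varPhi(X)=V^*\rho(\pi^{-1}(X))V$ is defined only on the $C^*$-subalgebra $\pi(\ascr)\subseteq\ogr(\hh)$, while Definition~\ref{dyfnh} quantifies over UCP maps on all of $\ogr(\hh)$; you must extend $\varPhi$ to $\ogr(\hh)$ by Arveson's extension theorem (or, again, simply invoke hyperrigidity $\Rightarrow$ UEP from \cite[Theorem~2.1]{Arv11}) before the constant-sequence argument applies.
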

If, in the if-clause of implication
\eqref{wvzq}, we replace the isometry
$V\in \ogr(\hh,\kk)$ with an arbitrary
operator $R\in \ogr(\hh,\kk)$, i.e.,
   \begin{align} \label{forfna}
\pi(g) = R^*\rho(g)R, \quad g\in G,
   \end{align}
then $R$ must be an isometry whenever $G$ contains the
unit $e$ of $\ascr$. However, the absence of $e$ in
$G$ allows for the possibility of non-isometric
solutions $R$ to \eqref{forfna}. Under the assumption
that $G$ is hyperrigid, the existence of such a
solution is closely related to the rigidity of $G$ at
$0$ and to the existence of a character of (or more
generally a state on) $\ascr$ vanishing on $G$. This
connection is made explicit in Corollaries~\ref{tiply}
and~\ref{hpse}. It turns out there is a dichotomy:
existence versus nonexistence of a state annihilating
$G$, with the corresponding behavior of the
intertwining relation described in each case. Given a
triple $(\pi,\rho,R)$, where $\pi\colon \ascr \to
\ogr(\hh)$ and $\rho\colon \ascr \to \ogr(\kk)$ are
representations and $R \in \ogr(\hh,\kk)$ is a
contraction, we set\footnote{By automatic continuity
of representations of $C^*$-algebras, $J$ is closed.
Moreover, $R$ is an isometry if and only if
$J=\ascr$.}
   \begin{align} \label{zbyr}
J := \big\{a \in \ascr : \pi(a) = R^*\rho(a)R\big\}.
   \end{align}
For brevity, we suppress the dependence of $J$ on
$(\pi,\rho,R)$. Note that, in view of
Theorem~\ref{delt}, if $G$ is hyperrigid and $R$ is an
isometry, then $R\pi = \rho R$ and $J = \ascr$. For
later use, we associate with any contraction $R \in
\ogr(\hh,\kk)$ the operators
   \begin{align} \label{dwytry}
\triangle_{R} := I - R^*R, \qquad \triangle_{R^*} := I
- RR^*.
   \end{align}
The operators $\triangle_{R}^{1/2}$ and
$\triangle_{R^*}^{1/2}$ are called the defect
operators of $R$ (cf.~\cite{Sz-F-B-K10}).
   \begin{theorem} \label{soorh}
Let $G$ be a hyperrigid subset of a unital
$C^*$-algebra $\ascr$. Suppose that $\hh$ and $\kk$
are separable Hilbert spaces, $\pi\colon \ascr \to
\ogr(\hh)$ and $\rho\colon \ascr \to \ogr(\kk)$ are
representations, and $R \in \ogr(\hh,\kk)$ is a
non-isometric contraction such that $\pi(g) =
R^*\rho(g)R$ for all $g \in G$. Then the following
dichotomy~holds{\em :}
   \begin{enumerate}
   \item[($\alpha$)] Either there exists a state $\phi$ on $\ascr$ such that
$G \subseteq \ker \phi$, in which case $R \pi(a) =
\rho(a) R$ for all $a\in \ascr$ and $J$ is a closed
ideal in $\ascr$.
   \item[($\beta$)] Or, for every state $\phi$ on $\ascr$,
$G \nsubseteq \ker \phi$, in which case neither
$R\pi(a)=\rho(a)R$ for all $a \in \ascr$ nor $J$ is an
ideal.
   \end{enumerate}
Moreover, if {\em ($\alpha$)} holds, then $\ker \phi =
J$ and $\phi$ is a character of $\ascr$, and
   \begin{align} \label{xzaw}
\pi(a) = R^*\rho(a)R + \phi(a) \triangle_{R}, \quad a
\in \ascr.
   \end{align}
   \end{theorem}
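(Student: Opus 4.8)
The plan is to reduce the whole statement to the isometric characterization of hyperrigidity in Theorem~\ref{delt}(iii) by unitalizing the contractive completely positive map $\Phi(a) := R^*\rho(a)R$. Note at the outset that $R$ being non-isometric means $\triangle_R = I - R^*R \neq 0$, that $\Phi$ is completely positive and contractive with $\Phi(g) = \pi(g)$ for $g \in G$ and $\Phi(e) = I - \triangle_R$, and that $\ascr$ is separable because $G$ is a countable generating set; the last point keeps all auxiliary Hilbert spaces separable, which is what Theorem~\ref{delt}(iii) requires. The dichotomy itself is the trivial logical one (a state annihilating $G$ exists, or none does); the content is the consequences in each branch.

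For case $(\alpha)$ I would start from a state $\phi$ with $G \subseteq \ker\phi$ and observe that the \emph{correction} $a \mapsto \phi(a)\triangle_R$ is completely positive and vanishes on $G$, so $\Phi + \phi(\cdot)\triangle_R$ is a UCP map agreeing with $\pi$ on $G$. The key is to realize this UCP map through an \emph{isometric} Stinespring embedding: using the GNS triple $(\hh_\phi,\pi_\phi,\xi_\phi)$ of $\phi$ and the map $Mh = \xi_\phi \otimes \triangle_R^{1/2}h$ from $\hh$ into $\hh_\phi \otimes \hh$ (for which $M^*M = \triangle_R$ and $M^*(\pi_\phi(a)\otimes I_\hh)M = \phi(a)\triangle_R$ by a one-line inner-product computation), the operator $W := R \oplus M$ into $\kk \oplus (\hh_\phi \otimes \hh)$ is an isometry, and with $\Sigma := \rho \oplus (\pi_\phi \otimes I_\hh)$ one has $W^*\Sigma(a)W = R^*\rho(a)R + \phi(a)\triangle_R$, hence $W^*\Sigma(g)W = \pi(g)$ for $g \in G$. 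Feeding the triple $(\pi,\Sigma,W)$ into Theorem~\ref{delt}(iii) gives $W\pi(a) = \Sigma(a)W$ and therefore $\pi(a) = W^*\Sigma(a)W = R^*\rho(a)R + \phi(a)\triangle_R$, which is exactly \eqref{xzaw}. After that everything is bookkeeping: the first coordinate of $W\pi = \Sigma W$ is the intertwining relation $R\pi(a) = \rho(a)R$; the second coordinate, $M\pi(a) = (\pi_\phi(a)\otimes I_\hh)M$, unwinds (pair with $\xi_\phi\otimes\triangle_R^{1/2}h'$) to $\triangle_R\pi(a) = \phi(a)\triangle_R$, whence $\phi(ab)\triangle_R = \triangle_R\pi(ab) = \triangle_R\pi(a)\pi(b) = \phi(a)\phi(b)\triangle_R$ and, since $\triangle_R \neq 0$, $\phi$ is a character; and \eqref{xzaw} together with $\triangle_R \neq 0$ identifies $J$ with $\ker\phi$, a closed (indeed maximal) ideal.

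For case $(\beta)$ I would argue by contradiction in the soft way: from either conclusion of $(\alpha)$ one can manufacture a state annihilating $G$. If $J$ were an ideal it would be proper (because $e \notin J$, since $\pi(e) = I \neq I - \triangle_R = R^*\rho(e)R$) and closed (automatic continuity), so $\ascr/J$ would be a nonzero unital $C^*$-algebra and the pullback of any of its states along the quotient map would annihilate $G$, which is impossible; hence $J$ is not an ideal. And if $R\pi(a) = \rho(a)R$ held for every $a$, then taking adjoints gives $\pi(a)R^* = R^*\rho(a)$, so $\pi(a)$ commutes with $R^*R$ and hence with $\triangle_R$; consequently $a \in J \iff \triangle_R\pi(a) = 0$, and this set is a proper closed two-sided ideal containing $G$ (properness from $\triangle_R\pi(e) = \triangle_R \neq 0$), so $J$ would be an ideal — again impossible by what was just shown.

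The main obstacle I anticipate is precisely the unitalization/dilation step in $(\alpha)$: the hypotheses supply only the contractive CP map $\Phi$, and the argument lives or dies on noticing that exactly when $G \subseteq \ker\phi$ the added CP block $\phi(\cdot)\triangle_R$ is invisible on $G$, so that $\Phi$ completes to a UCP map of the right shape, and then on presenting that UCP map with an honestly isometric Stinespring operator $W$ so that Theorem~\ref{delt}(iii) applies. Getting that coupling right — in particular taking the second summand of $\Sigma$ to be $\pi_\phi \otimes I_\hh$ rather than something on a one-dimensional space, since $\phi$ is known to be a character only \emph{after} the argument — is the delicate part; the remaining computations, and all of $(\beta)$, are routine.
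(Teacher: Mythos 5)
Your proof is correct, and while the overall strategy coincides with the paper's (complete $a\mapsto R^*\rho(a)R$ to a UCP map by adding the block $\phi(\cdot)\triangle_R$, then exploit hyperrigidity; for the second branch, reduce the intertwining relation to the ideal property of $J$ and show that an ideal $J\supseteq G$ forces a state killing $G$), the implementation differs at three points, all legitimately. In case $(\alpha)$ the paper invokes Arveson's unique extension property directly to get \eqref{xzaw} and then obtains the intertwiner by hand, via the complete-positivity estimate $\|\sum_j\pi(a_j)h_j\|^2\Ge\|\sum_j\rho(a_j)Rh_j\|^2$ which produces a contraction $\widehat R$ with $\widehat R\pi(a)=\rho(a)R$ and $\widehat R=R$; you instead build the explicit isometric Stinespring dilation $W=R\oplus M$, $Mh=\xi_\phi\otimes\triangle_R^{1/2}h$, $\varSigma=\rho\oplus(\pi_\phi\otimes I_\hh)$, and feed $(\pi,\varSigma,W)$ into Theorem~\ref{delt}(iii), which yields the intertwining relation and \eqref{xzaw} simultaneously (the separability bookkeeping, via separability of $\ascr$ and hence of $\hh_\phi$, is needed and you supply it). You then read multiplicativity of $\phi$ off the second coordinate $\triangle_R\pi(a)=\phi(a)\triangle_R$, whereas the paper proves $J$ is an ideal by the computation $\pi(bac)=R^*\rho(bac)R$, observes $\ker\phi\subseteq J$ from \eqref{xzaw}, and uses codimension one to get $\ker\phi=J$ and hence that $\phi$ is a character; your route gives $J=\ker\phi$ directly from \eqref{xzaw} and $\triangle_R\neq0$. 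In case $(\beta)$ the paper offers two arguments (a compactness argument on the state space producing an invertible element of $J$, and a direct proof that a proper closed ideal containing $G$ yields $\ascr=J\dotplus\cbb e$ and a character); your pullback of a state from the quotient $\ascr/J$ is a third, arguably the softest, variant, and your reduction of the intertwining case to the ideal case (via $\triangle_R$ commuting with $\pi$ and $J=\{a:\triangle_R\pi(a)=0\}$) is a correct alternative to the paper's computation. What your approach buys is a single clean appeal to the dilation characterization in place of the ad hoc intertwiner construction; what the paper's buys is independence from the explicit GNS construction and a self-contained inequality argument.
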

Theorem~\ref{soorh} can be stated in the following
logically equivalent form.
   \begin{manualtheorem}
   Let $G$ be a hyperrigid subset of a unital
$C^*$-algebra $\ascr$, and let $(\pi,\rho,R)$ satisfy
the assumptions of Theorem~{\em \ref{soorh}}. Then the
following are equivalent{\em :}
   \begin{enumerate}
   \item[(i)] there exists a state on $\ascr$
vanishing on $G$,
   \item[(ii)] there exists a character of $\ascr$
vanishing on $G$,
   \item[(iii)] $R$ intertwines $\pi$ and $\rho$,
and/or $J$ is an ideal.
   \end{enumerate}
   \end{manualtheorem}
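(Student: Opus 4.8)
The plan is to read Theorem~$2.2'$ off Theorem~\ref{soorh}, which already carries all the substance: the two statements concern the same data $(\pi,\rho,R)$, one packaged as a dichotomy and the other as a cycle of equivalences. The single preliminary observation needed is that the alternatives~($\alpha$) and~($\beta$) of Theorem~\ref{soorh} are genuinely exhaustive and mutually exclusive, because the hypothesis of~($\alpha$) --- existence of a state $\phi$ on $\ascr$ with $G\subseteq\ker\phi$ --- is exactly the negation of the hypothesis of~($\beta$), namely $G\nsubseteq\ker\phi$ for every state $\phi$ on $\ascr$. Hence, for the given triple, precisely one of~($\alpha$),~($\beta$) occurs, and condition~(i) of Theorem~$2.2'$ is nothing but the assertion that we are in case~($\alpha$).

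Granting this, I would establish the arrows (ii)$\Rightarrow$(i), (i)$\Rightarrow$(ii), (i)$\Rightarrow$(iii), (iii)$\Rightarrow$(i), which together give (i)$\Leftrightarrow$(ii)$\Leftrightarrow$(iii). The implication (ii)$\Rightarrow$(i) is immediate, since a character of $\ascr$ is a state, and one vanishing on $G$ witnesses~(i). For (i)$\Rightarrow$(ii): if~(i) holds we are in case~($\alpha$), and the ``moreover'' clause of Theorem~\ref{soorh} asserts that the annihilating state $\phi$ is then automatically a character of $\ascr$; as $G\subseteq\ker\phi$, this is~(ii). For (i)$\Rightarrow$(iii): being in case~($\alpha$) yields, straight from its conclusion, both $R\pi(a)=\rho(a)R$ for all $a\in\ascr$ and that $J$ is a closed ideal of $\ascr$, so~(iii) holds. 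For (iii)$\Rightarrow$(i) I argue by contraposition: if~(i) fails, the hypothesis of~($\beta$) is met, whence its conclusion says that neither does $R$ intertwine $\pi$ and $\rho$ nor is $J$ an ideal --- contradicting~(iii). Therefore~(i) holds, and the cycle closes.

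At this level there is no genuine obstacle: the argument is pure propositional logic applied to the dichotomy, and the only point meriting a moment's thought is the reading of the connective ``and/or'' in~(iii) --- which is harmless, since case~($\alpha$) supplies \emph{both} of its ingredients while case~($\beta$) denies \emph{both}, so the deduction is insensitive to whether~(iii) is read inclusively or conjunctively. The real work sits one level below, in Theorem~\ref{soorh} itself: roughly, from the non-isometric contraction $R$ one forms the isometry $V\colon\hh\to\kk\oplus\hh$ given by $Vh=(Rh,\triangle_{R}^{1/2}h)$, extends $\rho$ to a representation $\rho'$ of $\ascr$ on $\kk\oplus\hh$ for which $\pi(g)=V^*\rho'(g)V$ whenever $g\in G$, applies hyperrigidity through Theorem~\ref{delt}\,(iii) to obtain $V\pi(a)=\rho'(a)V$ for all $a\in\ascr$, and then reads off the intertwining $R\pi=\rho R$, the identity~\eqref{xzaw}, the equality $\ker\phi=J$, and the multiplicativity of $\phi$. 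That sequence of constructions, rather than the bookkeeping of the proof above, is where the difficulty lies.
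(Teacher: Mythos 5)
Your proposal is correct and matches the paper's treatment: the paper offers no separate proof of Theorem~$2.2'$, presenting it as a logically equivalent reformulation of Theorem~\ref{soorh}, and your derivation is exactly that propositional bookkeeping (exhaustiveness and exclusivity of $(\alpha)$/$(\beta)$, the ``moreover'' clause giving that the annihilating state is a character, and contraposition via $(\beta)$ for (iii)$\Rightarrow$(i)), including the correct observation that the ``and/or'' is harmless since $(\alpha)$ yields both conditions and $(\beta)$ denies both. Your closing sketch of how Theorem~\ref{soorh}($\alpha$) itself is proved differs in flavor from the paper (which works with the UCP map $a\mapsto R^*\rho(a)R+\phi(a)\triangle_R$ and the unique extension property rather than an explicit isometric dilation), but that is not part of the statement under review.
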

We refer the reader to Section~\ref{9.e} for examples
illustrating Theorem~\ref{soorh}($\beta$).

It turns out that even in the unital case, when the
generating set $G$ does not contain the unit,
completely contractive completely positive (CCCP) maps
arise naturally in the study of hyperrigidity (see
\cite{Sal19,De-Ka-Pa25}). This is established in
Theorem~\ref{ogylu} below. Before proceeding, recall
that by \cite[Theorem~2.1.7]{Mur90} and the
Stinespring dilation theorem \cite[Theorem~1]{Sti55}:
   \begin{align} \label{stymyr}
   \begin{minipage}{53ex}
   \begin{center}
{\em A linear map $\varPsi\colon \ascr\to \ogr(\hh)$
is CCCP if and only if $\varPsi$ is completely
positive and $\|\varPsi(e)\|\Le 1$.}
   \end{center}
   \end{minipage}
   \end{align}
Given a unital $C^*$-algebra $\ascr$, we denote by
$\mathfrak{M}_{\ascr}$ the set of all characters of
$\ascr$, i.e., the set of all one-dimensional
representations $\chi\colon \ascr \to \cbb$. The
assumption that $G \subseteq \ker\phi$ plays an
essential role in the theorem below (see
Remark~\ref{qqre}).
   \begin{theorem} \label{ogylu}
   Let $G$ be a finite or countably
infinite set of generators of a unital
$C^*$-algebra $\ascr$, and let $\phi$
be a state on $\ascr$ such that $G
\subseteq \ker\phi$. Then the following
conditions are equivalent{\em :}
   \begin{enumerate}
   \item[(i)] $G$ is hyperrigid,
   \item[(ii)] for all separable Hilbert spaces $\hh$ and $\kk$,
all representations $\pi\colon \ascr \to \ogr(\hh)$
and $\rho\colon \ascr \to \ogr(\kk)$ and every
contraction $R\in \ogr(\hh,\kk)$,
   \begin{align} \label{nicuntr}
\pi(g) = R^*\rho(g)R \;\; \forall g\in G \implies
R\pi(a)=\rho(a)R \;\; \forall a\in \ascr,
   \end{align}
   \item[(iii)] for all separable Hilbert spaces $\hh$ and $\kk$,
all representations $\pi\colon \ascr \to \ogr(\hh)$
and $\rho\colon \ascr \to \ogr(\kk)$ and every
contraction $R\in \ogr(\hh,\kk)$,
   \begin{align} \label{ramc}
\pi(g) = R^*\rho(g)R \;\; \forall g\in G \implies
\pi(a)=R^*\rho(a)R \;\; \forall a \in \mcal,
   \end{align}
where $\mcal = \ascr$ if $R^*R=I${\em ;} otherwise,
$\mcal = \ker\chi$ for some $\chi\in
\mathfrak{M}_{\ascr}$.
   \item[(iv)] for every separable Hilbert space $\hh$,
every representation $\pi\colon \ascr \to \ogr(\hh)$
and every CCCP map $\varPsi\colon \ascr \to
\ogr(\hh)$,
   \begin{align*}
\pi(g) = \varPsi(g) \;\; \forall g\in G \implies
\pi(a)=\varPsi(a) \;\; \forall a \in \mcal,
   \end{align*}
where $\mcal = \ascr$ if $\varPsi(e)=I$; otherwise,
$\mcal = \ker\chi$ for some $\chi\in
\mathfrak{M}_{\ascr}$.
   \end{enumerate}
Moreover, if $G$ is hyperrigid and the operator $R$ in
{\em (iii)} is not isometric, or if $\varPsi(e)\neq I$
in {\em (iv)}, then $\phi=\chi$.
   \end{theorem}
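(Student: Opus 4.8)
The plan is to obtain all four equivalences, and the concluding ``$\phi=\chi$'' assertion, by assembling the two dilation-theoretic results already in hand — Theorem~\ref{delt} and, decisively, Theorem~\ref{soorh} — together with the Stinespring dilation theorem and the description \eqref{stymyr} of CCCP maps. One preliminary point is used throughout: every Stinespring dilation occurring below may be taken on a separable Hilbert space. Indeed, since $G$ is countable, $\ascr=C^*(G)$ is separable, and for a completely positive $\varPsi\colon\ascr\to\ogr(\hh)$ with $\hh$ separable the minimal Stinespring space is the closed linear span of $\{\rho(a)Rh:a\in\ascr,\ h\in\hh\}$, hence separable; this is exactly the observation alluded to in the discussion preceding Theorem~\ref{delt}. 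I will run the cycle (i)$\Rightarrow$(ii)$\Rightarrow$(iii)$\Rightarrow$(iv)$\Rightarrow$(i).

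For (i)$\Rightarrow$(ii), take $\pi,\rho,R$ with $\pi(g)=R^*\rho(g)R$ for all $g\in G$. If $R$ is isometric, this is Theorem~\ref{delt}(iii). If $R$ is not isometric, then Theorem~\ref{soorh} applies to $(\pi,\rho,R)$, and since the given $\phi$ is a state with $G\subseteq\ker\phi$ we are necessarily in alternative $(\alpha)$, which again yields $R\pi=\rho R$ — and, as a byproduct, $\ker\phi=J$, $\phi\in\mathfrak{M}_{\ascr}$, and \eqref{xzaw}. Conversely (ii)$\Rightarrow$(i) is immediate, since every isometry is a contraction and hence (ii) subsumes condition (iii) of Theorem~\ref{delt}. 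Granting (i), here is (ii)$\Rightarrow$(iii): for $\pi,\rho,R$ with $\pi(g)=R^*\rho(g)R$ on $G$, if $R^*R=I$ then Theorem~\ref{delt}(ii) gives $\pi(a)=R^*\rho(a)R$ for all $a$, so $\mcal=\ascr$; if $R^*R\ne I$, then Theorem~\ref{soorh}($\alpha$) tells us $\phi$ is a character with $\ker\phi=J=\{a:\pi(a)=R^*\rho(a)R\}$, so putting $\chi:=\phi$ gives $\pi(a)=R^*\rho(a)R$ for all $a\in\mcal=\ker\chi$ and records $\chi=\phi$.

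For (iii)$\Rightarrow$(iv), let $\pi\colon\ascr\to\ogr(\hh)$ be a representation and $\varPsi\colon\ascr\to\ogr(\hh)$ a CCCP map with $\pi(g)=\varPsi(g)$ for $g\in G$. By \eqref{stymyr}, $\varPsi$ is completely positive with $\|\varPsi(e)\|\Le1$, so its minimal Stinespring dilation provides a separable $\kk$, a representation $\rho\colon\ascr\to\ogr(\kk)$, and a contraction $R\in\ogr(\hh,\kk)$ with $\varPsi(a)=R^*\rho(a)R$ for all $a$ and $R^*R=\varPsi(e)$. Then $\pi(g)=R^*\rho(g)R$ on $G$, so (iii) yields $\pi(a)=R^*\rho(a)R=\varPsi(a)$ for all $a\in\mcal$, where $\mcal=\ascr$ exactly when $R^*R=I$, i.e.\ when $\varPsi(e)=I$, and otherwise $\mcal=\ker\chi$ with $\chi=\phi$ (since then $R$ is non-isometric). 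Finally, for (iv)$\Rightarrow$(i), given separable $\hh,\kk$, representations $\pi,\rho$, and an isometry $V\in\ogr(\hh,\kk)$ with $\pi(g)=V^*\rho(g)V$ on $G$, the map $\varPsi:=V^*\rho(\cdot)V\colon\ascr\to\ogr(\hh)$ is UCP (hence CCCP with $\varPsi(e)=I$) and agrees with $\pi$ on $G$; (iv) with $\mcal=\ascr$ then gives $\pi(a)=V^*\rho(a)V$ for all $a\in\ascr$, which is condition (ii) of Theorem~\ref{delt}, whence $G$ is hyperrigid. The concluding ``$\phi=\chi$'' clause has already emerged in the (ii)$\Rightarrow$(iii) and (iii)$\Rightarrow$(iv) steps.

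Since Theorem~\ref{soorh} carries the real weight, what is left is essentially bookkeeping; the two places demanding care are keeping the dichotomy ``$R$ isometric / non-isometric'' aligned with ``$\mcal=\ascr$ / $\mcal=\ker\chi$'' and with ``$\varPsi(e)=I$ / $\varPsi(e)\ne I$'', and verifying that each invoked Stinespring dilation lives on a separable Hilbert space — which is exactly what makes Theorems~\ref{delt} and~\ref{soorh} applicable, and is the separability remark promised earlier in the paper.
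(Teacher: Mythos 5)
Your argument is correct under the theorem's hypotheses, but it is routed differently from the paper's proof. The paper establishes the two local equivalences (ii)$\Leftrightarrow$(iii) and (iii)$\Leftrightarrow$(iv); in particular it gives a \emph{direct} proof of (iii)$\Rightarrow$(ii), exploiting that $\ker\chi$ is an ideal, so $\pi(a^*a)=R^*\rho(a^*a)R$ for $a\in\ker\chi$, whence $\rho(a)R=R\pi(a)$ on $\ker\chi$ by \cite[Lemma~3.2]{P-S-roots23}, and then passing to all of $\ascr$ because $\ker\chi$ has codimension one; and it proves (ii)$\Rightarrow$(i) by running Stinespring plus the unique extension property from scratch. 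You bypass both steps: you close everything by the cycle (i)$\Rightarrow$(ii)$\Rightarrow$(iii)$\Rightarrow$(iv)$\Rightarrow$(i), getting (ii)$\Rightarrow$(i) for free by restricting (ii) to isometries and quoting Theorem~\ref{delt}(iii), and (iv)$\Rightarrow$(i) by feeding the UCP map $V^*\rho(\cdot)V$ into (iv) to recover Theorem~\ref{delt}(ii). Since the heavy lifting sits in Theorems~\ref{delt} and~\ref{soorh}, your routing is leaner; what the paper's organization buys is that (ii)$\Leftrightarrow$(iii), (iii)$\Leftrightarrow$(iv) and (ii)$\Rightarrow$(i) are obtained without ever using the hypothesis $G\subseteq\ker\phi$, which is precisely what Remark~\ref{qqre} records, whereas your passage from (iii) back to (ii) goes through (i)$\Rightarrow$(ii) and hence through Theorem~\ref{soorh}($\alpha$), which does need the annihilating state. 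One small point to tighten: the ``moreover'' clause asserts that \emph{any} character $\chi$ witnessing (iii) (when $G$ is hyperrigid and $R$ is not isometric) coincides with $\phi$, while your (ii)$\Rightarrow$(iii) only exhibits the admissible choice $\chi:=\phi$, and your parenthetical ``$\chi=\phi$'' in (iii)$\Rightarrow$(iv) presupposes the stronger statement. The missing line is implicit in what you already invoked: Theorem~\ref{soorh}($\alpha$) gives $J=\ker\phi$ with $\phi$ a character, condition (iii) gives $\ker\chi\subseteq J$, and two codimension-one kernels of characters that are nested must coincide, so $\chi=\phi$; the case $\varPsi(e)\neq I$ in (iv) then follows because $R^*R=\varPsi(e)$ for the minimal Stinespring dilation.
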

The corollary below shows that the characterization in
\cite[Theorem~3.19(vii)]{De-Ka-Pa25}, stated via the
CCCP unique extension property for separating sets $G$
(i.e., those satisfying alternative $(\beta)$ of our
dichotomy), remains valid when $G$ is as in
alternative $(\alpha)$, provided that the
$C^*$-algebra in question is character-free. This is
the case in particular for simple unital
$C^*$-algebras (see \cite{Mur90}). For examples of
noncommutative unital $C^*$-algebras that either admit
characters or do not, we refer the reader to
\cite[Theorem~7.23]{Doug72} for the former case and to
\cite{Cun77} and \cite[Corollary~I.10.6]{Dav96} for
the latter. It is worth noting that the Cuntz
$C^*$-algebra $\mathcal O_n$ with $n \in \natu
\setminus \{1\}$ falls within the scope of
Corollary~\ref{caly}, since $\mathcal O_n$ is simple
(see \cite[Theorem~1.13]{Cun77}), its standard
generating set $G_n = \{V_1, \ldots, V_n\}$ is
hyperrigid (see \cite[Corollary~3.4]{Arv11}), and
$G_n$ is annihilated by a state on $\mathcal O_n$ (see
\cite[Example~3.4]{Sal19}).
   \begin{corollary} \label{caly}
   Let $\ascr$ be a unital $C^*$-algebra with no
   characters. Suppose that $G$ is a finite or
   countably infinite generating set of $\ascr$ such
   that $G \subseteq \ker\phi$ for some state $\phi$
   on $\ascr$. Then the following conditions are
   equivalent{\em :}
   \begin{enumerate}
   \item[(i)] $G$ is hyperrigid,
   \item[(ii)] for all separable Hilbert spaces $\hh$ and $\kk$, all
representations $\pi\colon \ascr \to \ogr(\hh)$ and
$\rho\colon \ascr \to \ogr(\kk)$ and every contraction
$R\in \ogr(\hh,\kk)$,
   \begin{align*}
\pi(g) = R^*\rho(g)R \;\; \forall g\in G \implies
\pi(a)=R^*\rho(a)R \;\; \forall a \in \ascr,
   \end{align*}
   \item[(iii)] for every separable Hilbert space $\hh$, every
representation $\pi\colon \ascr \to \ogr(\hh)$ and
every CCCP map $\varPsi\colon \ascr \to \ogr(\hh)$,
   \begin{align*}
\pi(g) = \varPsi(g) \;\; \forall g\in G \implies
\pi(a) = \varPsi(a) \;\; \forall a\in \ascr.
   \end{align*}
   \end{enumerate}
Moreover, if $G$ is hyperrigid, then $R$ in {\em (ii)}
is isometric and $\varPsi$ in {\em (iii)} is a
UCP~map.
   \end{corollary}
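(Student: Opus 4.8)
The plan is to read the corollary off Theorem~\ref{ogylu}, the one additional ingredient being that ``$\ascr$ has no characters'' means $\mathfrak{M}_{\ascr}=\emptyset$, which renders vacuous the two ``otherwise $\mcal=\ker\chi$'' branches in Theorem~\ref{ogylu}(iii) and~(iv). Throughout I use that a countable generating set forces $\ascr=C^*(G)$ to be separable, so that the minimal Stinespring dilation of a CCCP map into $\ogr(\hh)$ with $\hh$ separable may be taken on a separable Hilbert space.

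The decisive point is that, under the standing hypotheses, hyperrigidity of $G$ \emph{precludes non-isometric solutions}: if $\pi\colon\ascr\to\ogr(\hh)$ and $\rho\colon\ascr\to\ogr(\kk)$ are representations on separable Hilbert spaces and $R\in\ogr(\hh,\kk)$ is a contraction with $\pi(g)=R^*\rho(g)R$ for all $g\in G$, then $R$ is an isometry. Were $R$ non-isometric, Theorem~\ref{soorh} would apply, offering either $(\alpha)$, which yields a state $\psi$ with $G\subseteq\ker\psi$ that (by the ``moreover'' clause) is a character of $\ascr$ --- impossible, as $\ascr$ is character-free --- or $(\beta)$, which asserts $G\nsubseteq\ker\psi$ for every state $\psi$, contradicting the hypothesis $G\subseteq\ker\phi$. (Equivalently, this is the contrapositive of the last sentence of Theorem~\ref{ogylu}, as there is no character for $\phi$ to coincide with.) From this, (i)$\Rightarrow$(ii) follows by applying the observation to $R=:V$, which becomes an isometry, and then using the ``moreover'' part of Theorem~\ref{delt} to get $\pi(a)=V^*\rho(a)V=R^*\rho(a)R$ for all $a\in\ascr$; in particular $R$ is isometric, as claimed in the ``moreover'' of the corollary. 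For (i)$\Rightarrow$(iii) I would dilate a CCCP map $\varPsi$ with $\varPsi|_G=\pi|_G$ to $\varPsi(\cdot)=V^*\rho(\cdot)V$ with $\rho$ a representation on a separable space and $\|V\|^2=\|\varPsi(e)\|\le1$; the observation forces $V$ to be an isometry, hence $\varPsi(e)=V^*V=I$, so $\varPsi$ is UCP, and Theorem~\ref{delt} yields $\varPsi(a)=V^*\rho(a)V=\pi(a)$ for all $a\in\ascr$.

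The converses are formal. For (ii)$\Rightarrow$(i), restricting (ii) to isometries $R=V$ reproduces condition (ii) of Theorem~\ref{delt} verbatim, which gives hyperrigidity of $G$. For (iii)$\Rightarrow$(i), note that (iii) itself forces $\varPsi(e)=\pi(e)=I$ for every CCCP map $\varPsi$ agreeing with a representation $\pi$ on $G$, so that condition (iv) of Theorem~\ref{ogylu} holds (always with $\mcal=\ascr$); since the hypothesis $G\subseteq\ker\phi$ is in force, Theorem~\ref{ogylu} then delivers (i). I expect no serious obstacle; the only matters needing care are the correct interpretation of the ``otherwise $\mcal=\ker\chi$'' branches of Theorem~\ref{ogylu}(iii),(iv) when $\mathfrak{M}_{\ascr}=\emptyset$ --- they are vacuous, so those conditions collapse exactly onto (ii) and (iii) of the corollary together with the stated isometricity/unitality --- and the separability of the Stinespring dilation space, which holds because $\ascr$ is separable.
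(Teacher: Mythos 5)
Your proposal is correct and follows essentially the same route as the paper: both arguments hinge on the observation that a non-isometric $R$ (or $\varPsi(e)\neq I$) together with $G\subseteq\ker\phi$ would force a character of $\ascr$ annihilating $G$ — impossible in a character-free algebra — so $R$ is isometric and $\varPsi$ is UCP, after which Theorem~\ref{delt}, Stinespring's theorem and \eqref{stymyr} finish the equivalences. The only cosmetic difference is that you unwind this step via Theorem~\ref{soorh} (and route (iii)$\Rightarrow$(i) through Theorem~\ref{ogylu}(iv)), whereas the paper cites the implication (i)$\Rightarrow$(iii) of Theorem~\ref{ogylu} and Arveson's unique extension property directly.
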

Note that the ``moreover'' part of
Corollary~\ref{caly} may still hold even if the
$C^*$-algebra in question has many characters,
provided the remaining assumptions are unchanged.
Indeed, consider the unital $C^*$-algebra $C(\tbb)$,
which is generated by the identity function $\xi$ on
$\tbb = \{z\in\cbb\colon |z|=1\}$. In contrast to the
Cuntz algebra $\mathcal O_n$ with $n \in
\natu\setminus \{1\}$, the algebra $C(\tbb)$ has
infinitely many characters; nevertheless, $G=\{\xi\}$
is hyperrigid in $C(\tbb)$ (see \cite[Corollary~2,
p.~795]{Sh20}) and is annihilated by the state $\phi$
defined by $\phi(f)=\int_{\tbb} f \D\mu$ for $f \in
C(\tbb)$, where $\mu$ is the normalized Lebesgue
measure on $\tbb$ (cf.~\cite[Example~1.3]{Sal19}).
Since the characters of $C(\tbb)$ never annihilate
$\xi$, it follows from Theorem~\ref{soorh} that the
``moreover'' part of Corollary~\ref{caly} still holds
in this case.

   To state the next theorem, we need the definition
of a $\chi$-decomposition.
   \begin{df} \label{dsfim}
Let $\ascr$ be a unital $C^*$-algebra,
$\chi$ be a character of $\ascr$,
$\pi\colon \ascr \to \ogr(\hh)$ and
$\rho\colon \ascr \to
\ogr(\mathcal{K})$ be representations
on Hilbert spaces $\hh$ and $\kk$,
respectively, and $R \in
\ogr(\mathcal{H}, \mathcal{K})$ be a
contraction. Suppose that $\hh_0$ and
$\hh_1$ are closed subspaces of $\hh$,
and $\kk_0$ and $\kk_1$ are closed
subspaces of $\kk$. We say that $(\pi,
\rho, R)$ has an {\em isometric}
(resp., a {\em unitary}) {\em
$\chi$-decomposition} $[$relative to
$(\hh_0, \hh_1, \kk_0, \kk_1$)$]$ if
   \begin{enumerate}
   \item[(i)] $\hh = \hh_0 \oplus \hh_1$ and $\kk = \kk_0 \oplus
\kk_1$,
   \item[(ii)] $\hh_0$ and $\hh_1$ reduce $\pi$, and
$\kk_0$ and $\kk_1$ reduce $\rho$,
   \item[(iii)] $\pi(a)|_{\hh_1}=\chi(a) I_{\hh_1}$ and
$\rho(a)|_{\kk_1}=\chi(a) I_{\kk_1}$ for all $a\in
\ascr$,
   \item[(iv)] $R=R_0 \oplus R_1$, where $R_0 \in
\ogr(\hh_0,\kk_0)$ is isometric $($resp., unitary$)$
and $R_1 \in \ogr(\hh_1,\kk_1)$,
   \item[(v)] $\pi(a)|_{\hh_0}  = R_0^*\rho(a)|_{\kk_0}
R_0$ for all $a \in \ascr$.
   \end{enumerate}
   \end{df}
The cases $\hh_0=\{0\}$ and $\hh_1=\{0\}$ are not
excluded.
   \begin{theorem} \label{rconrep} 
Let $G$ be a finite or countably
infinite set of generators of a unital
$C^*$-algebra $\ascr$, and let $\chi$
be a character of $\ascr$ such that $G
\subseteq \ker\chi$. Then the following
conditions are equivalent{\em :}
   \begin{enumerate}
   \item[(i)] $G$ is hyperrigid,
   \item[(ii)] for all separable Hilbert spaces $\hh$ and $\kk$,
all representations $\pi\colon \ascr \to
\ogr(\mathcal{H})$ and $\rho\colon \ascr \to
\ogr(\mathcal{K})$ and every contraction $R
\in \ogr(\mathcal{H}, \mathcal{K})$,
   \begin{align} \label{plyc}
\pi(g) = R^* \rho(g) R \;\; \forall g \in G
\implies \pi(a) = R^* \rho(a) R + \chi(a)
\triangle_R \;\; \forall a \in \ascr,
   \end{align}
   \item[(iii)] for all separable Hilbert spaces $\hh$ and $\kk$,
all representations $\pi\colon \ascr \to \ogr(\hh)$
and $\rho\colon \ascr \to \ogr(\kk)$ and every
contraction $R\in \ogr(\hh,\kk)$,
   \begin{align*}
\pi(g) = R^*\rho(g)R \;\; \forall g\in G \implies
\pi(a)=R^*\rho(a)R \;\; \forall a\in \ker\chi,
   \end{align*}
   \item[(iv)] for all separable Hilbert spaces $\hh$ and $\kk$,
all representations $\pi\colon \ascr \to
\ogr(\mathcal{H})$ and $\rho\colon \ascr \to
\ogr(\mathcal{K})$ and every contraction $R \in
\ogr(\mathcal{H}, \mathcal{K})$,
   \begin{align*}
\pi(g) = R^* \rho(g) R \;\; \forall g \in G \implies
(\pi, \rho, R) \text{ has an isometric }
\chi\text{-decomposition,}
   \end{align*}
   \item[(v)] for all separable Hilbert spaces $\hh$ and $\kk$,
all representations $\pi\colon \ascr \to
\ogr(\mathcal{H})$ and $\rho\colon \ascr \to
\ogr(\mathcal{K})$ and every contraction $R \in
\ogr(\mathcal{H}, \mathcal{K})$ satisfying the
minimality condition $\kk=\bigvee_{a\in \ascr}
\rho(a)R(\hh)$,
   \begin{align*}
\pi(g) = R^* \rho(g) R \;\; \forall g \in G \implies
(\pi, \rho, R) \text{ has a unitary }
\chi\text{-decomposition.}
   \end{align*}
   \end{enumerate}
   \end{theorem}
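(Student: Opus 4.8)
The plan is to funnel all five conditions through (ii), proving (i)$\Leftrightarrow$(iii)$\Leftrightarrow$(ii), then (ii)$\Leftrightarrow$(iv), and finally (ii)$\Leftrightarrow$(v). For (i)$\Leftrightarrow$(iii) I would specialize Theorem~\ref{ogylu} to the state $\phi=\chi$ (a character is a state): condition~(iii) there asserts that $\pi(g)=R^*\rho(g)R$ on $G$ forces $\pi(a)=R^*\rho(a)R$ on $\mcal$, with $\mcal=\ascr$ when $R^*R=I$ and $\mcal=\ker\chi'$ otherwise; in the isometric case $\ker\chi\subseteq\ascr=\mcal$, and in the non-isometric case the ``moreover'' clause of Theorem~\ref{ogylu} forces $\chi'=\chi$, so one always obtains $\pi(a)=R^*\rho(a)R$ on $\ker\chi$, which is condition~(iii) of the present theorem; conversely, an identity of this shape on $\ker\chi$ recovers Theorem~\ref{ogylu}(iii) (taking $\chi'=\chi$, and extending to $\mcal=\ascr$ when $R^*R=I$ via $a=(a-\chi(a)e)+\chi(a)e$). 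The step (iii)$\Leftrightarrow$(ii) is purely formal: the decomposition $a=(a-\chi(a)e)+\chi(a)e$ with $a-\chi(a)e\in\ker\chi$, together with $\pi(e)=I$ and $\triangle_R=I-R^*R$, turns ``$\pi=R^*\rho(\cdot)R$ on $\ker\chi$'' into $\pi(a)=R^*\rho(a)R+\chi(a)\triangle_R$ on all of $\ascr$, and restricting the latter to $\ker\chi$ gives back the former.

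For (ii)$\Rightarrow$(iv) I would first wring two operator identities out of the dilation formula. Expanding $\pi(ab)=\pi(a)\pi(b)$ with $\pi(\cdot)=R^*\rho(\cdot)R+\chi(\cdot)\triangle_R$ and simplifying by $R\triangle_R=\triangle_{R^*}R$ and $RR^*=I-\triangle_{R^*}$ collapses everything to $R^*(\rho(a)-\chi(a))\triangle_{R^*}(\rho(b)-\chi(b))R=0$ for all $a,b$; specializing $(a,b)=(a^*,a)$ exhibits the left side as $T^*T$ with $T=\triangle_{R^*}^{1/2}(\rho(a)-\chi(a))R$, hence $\triangle_{R^*}(\rho(a)-\chi(a))R=0$, and substituting this into $R(R^*\rho(a)R+\chi(a)\triangle_R)$ yields $R\pi(a)=\rho(a)R$ for every $a$ (so (ii) already re-proves hyperrigidity through Theorem~\ref{ogylu}(ii), which also gives (ii)$\Rightarrow$(i)). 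A short further computation, using $\triangle_{R^*}(\rho(a)-\chi(a))R=0$ and $R^*\triangle_{R^*}R+\triangle_R^{2}=\triangle_R$, produces the crucial identity $\triangle_R\pi(a)=\chi(a)\triangle_R$, whence also $\pi(a)\triangle_R=\chi(a)\triangle_R$. Now put $\hh_1=\{h\in\hh:\pi(a)h=\chi(a)h\ \text{for all}\ a\in\ascr\}$ and $\kk_1=\{k\in\kk:\rho(a)k=\chi(a)k\ \text{for all}\ a\in\ascr\}$, the generalized $\chi$-eigenspaces, and $\hh_0=\hh_1^{\perp}$, $\kk_0=\kk_1^{\perp}$; these reduce $\pi$, resp.\ $\rho$. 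Since $\pi(a)\triangle_R=\chi(a)\triangle_R$, the range of $\triangle_R$ sits in $\hh_1$, so $\triangle_R$ vanishes on $\hh_0$ and $R_0:=R|_{\hh_0}$ is isometric; the intertwinings $R\pi=\rho R$ and $\pi R^*=R^*\rho$ give $R(\hh_1)\subseteq\kk_1$ and $R^*(\kk_1)\subseteq\hh_1$, hence $R(\hh_0)\subseteq\kk_0$ and $R=R_0\oplus R_1$; finally, restricting $\pi(a)=R^*\rho(a)R+\chi(a)\triangle_R$ to $\hh_0$ gives $\pi(a)|_{\hh_0}=R_0^{*}\,\rho(a)|_{\kk_0}\,R_0$. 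This is exactly an isometric $\chi$-decomposition. The reverse (iv)$\Rightarrow$(ii) is a one-line block computation: with $R=R_0\oplus R_1$, $R_0$ isometric, $\pi(a)|_{\hh_1}=\chi(a)I=\rho(a)|_{\kk_1}$, one has $R^*\rho(a)R+\chi(a)\triangle_R=R_0^{*}\rho(a)|_{\kk_0}R_0\oplus\chi(a)I_{\hh_1}=\pi(a)$.

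For (ii)$\Rightarrow$(v): given a triple with $\pi(g)=R^*\rho(g)R$ on $G$ and $\kk=\bigvee_{a\in\ascr}\rho(a)R(\hh)$, construct the isometric $\chi$-decomposition as above; since $\rho(a)R_1(\hh_1)\subseteq\kk_1$ and $\rho(a)R_0(\hh_0)\subseteq\kk_0$, the minimality relation splits as $\kk=(\bigvee_a\rho(a)R_0(\hh_0))\oplus(\bigvee_a\rho(a)R_1(\hh_1))$ with the first summand inside $\kk_0$ and the second inside $\kk_1$, forcing $\kk_0=\bigvee_a\rho(a)R_0(\hh_0)$. Then Theorem~\ref{delt}, applied to the sub-dilation $(\pi|_{\hh_0},\rho|_{\kk_0},R_0)$ with the isometry $R_0$ (legitimate since $G$ is hyperrigid), gives $R_0\pi|_{\hh_0}=\rho|_{\kk_0}R_0$, so $R_0(\hh_0)$ is $\rho$-invariant and hence equals $\kk_0$; thus $R_0$ is unitary and the $\chi$-decomposition is unitary. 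Conversely (v)$\Rightarrow$(ii): starting from an arbitrary triple $(\pi,\rho,R)$ with $\pi(g)=R^*\rho(g)R$ on $G$, pass to the $\rho$-reducing subspace $\kk'=\bigvee_a\rho(a)R(\hh)$; the triple $(\pi,\rho|_{\kk'},R)$ meets the minimality hypothesis, still dilates $G$, and has $\triangle_R$ unchanged, so (v) furnishes a unitary (in particular isometric) $\chi$-decomposition for it, and the (iv)$\Rightarrow$(ii) computation yields $\pi(a)=R^*\rho(a)R+\chi(a)\triangle_R$ for all $a$.

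The hard part will be bookkeeping rather than a new idea: one must verify all five clauses of Definition~\ref{dsfim} at once, and above all extract the identities $\triangle_{R^*}(\rho(a)-\chi(a))R=0$ and $\triangle_R\pi(a)=\chi(a)\triangle_R$ from the dilation formula cleanly — these say precisely that $R$ intertwines $\pi$ and $\rho$ and that the defect of $R$ is carried by the $\chi$-isotypic part of $\pi$, after which the orthogonal splitting is forced. I would also note explicitly that the character in (ii)--(v) is the given $\chi$, not merely some character annihilating $G$: a character is a one-dimensional representation, hence determined by its values on the generating set $G$, so $\ascr$ has at most one character vanishing on $G$.
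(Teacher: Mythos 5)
Your proposal is correct, and it reaches the conclusion by a route that differs from the paper's in a substantive way. The paper proves (i)$\Rightarrow$(ii) by combining Theorem~\ref{delt} with the identity \eqref{xzaw} of Theorem~\ref{soorh}, proves (ii)$\Rightarrow$(i) by a Stinespring/unique-extension argument, obtains (iv) and (v) from (i) by invoking the structural Theorem~\ref{rroiu} (whose proof rests on Theorems~\ref{soorh} and~\ref{soorh-2}), and returns from (iv)/(v) to (ii) via Lemmas~\ref{deqa} and~\ref{2ryzl}(i). You instead funnel everything through (ii): the equivalence (i)$\Leftrightarrow$(iii)$\Leftrightarrow$(ii) is delegated to Theorem~\ref{ogylu} (together with its ``moreover'' clause and the observation that $G$ generates $\ascr$, so at most one character kills $G$), and then (iv) and (v) are extracted from the algebraic identity in (ii) alone, without using hyperrigidity: your collapse of $\pi(ab)=\pi(a)\pi(b)$ to $R^*(\rho(a)-\chi(a))\triangle_{R^*}(\rho(b)-\chi(b))R=0$ is correct (I checked it, using $\triangle_{R^*}R=R\triangle_R$ and $R^*\triangle_{R^*}=\triangle_RR^*$), and it does yield $\triangle_{R^*}(\rho(a)-\chi(a))R=0$, hence $R\pi=\rho R$ and $\triangle_R\pi(a)=\chi(a)\triangle_R$, so $\overline{\ran(\triangle_R)}\subseteq\mcal_{\chi}(\pi)$ and the splitting along $\hh_1=\mcal_{\chi}(\pi)$, $\kk_1=\mcal_{\chi}(\rho)$ gives an isometric $\chi$-decomposition; note your $\kk_1$ differs from the paper's choice $\lcal_{\chi}=\overline{R(\mcal_{\chi}(\pi))}$, which is harmless since only existence of a decomposition is asserted. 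Your (iv)$\Rightarrow$(ii) block computation is exactly Lemma~\ref{2ryzl}(i), and your (v)$\Rightarrow$(ii) via restriction to $\kk'=\bigvee_a\rho(a)R(\hh)$ is the paper's Lemma~\ref{deqa} argument. Two small remarks: in (ii)$\Rightarrow$(v) the appeal to Theorem~\ref{delt} is superfluous, since the intertwining $R\pi=\rho R$ you already derived restricts to $R_0\pi|_{\hh_0}=\rho|_{\kk_0}R_0$ and, with the minimality splitting $\kk_0=\bigvee_a\rho(a)R_0(\hh_0)$, forces $R_0$ unitary; and your route does re-derive, inside the proof, computations that the paper has packaged once and for all in Theorems~\ref{soorh-2} and~\ref{rroiu} --- what your approach buys is a self-contained, purely algebraic derivation of the decompositions from the dilation formula (independent of hyperrigidity), while the paper's buys reuse of structural results of independent interest and finer information about the pieces (e.g.\ purity and dense range of $R_1$).
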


As a complement to the first part ($\alpha$) of the
dichotomy in Theorem~\ref{soorh}, we obtain the
following $\chi$-decomposition. Recall that an
operator $R \in \ogr(\hh)$ is called a \emph{pure
contraction} if $\|Rh\| < \|h\|$ for every $h \in \hh
\setminus \{0\}$ (see \cite{Ka83}).
   \begin{theorem} \label{soorh-2}
Let $G$ be a hyperrigid subset of a unital
$C^*$-algebra $\ascr$, and let $\chi$ be a character
of $\ascr$ such that $G \subseteq \ker\chi$. Suppose
that $\hh$ and $\kk$ are separable Hilbert spaces,
$\pi\colon \ascr \to \ogr(\hh)$ and $\rho\colon \ascr
\to \ogr(\kk)$ are representations, and $R \in
\ogr(\hh,\kk)$ is a non-isometric contraction such
that $\pi(g) = R^*\rho(g)R$ for all $g \in G$. Then
   \begin{enumerate}
   \item[(i)] $\triangle_{R}$ commutes with $\pi$
and $\triangle_{R^*}$ commutes with $\rho$ $($see
\eqref{dwytry}$)$,
   \item[(ii)] $(\pi,\rho,R)$ has an isometric
$\chi$-decomposition relative to $(\hh_0, \hh_1,
\kk_0, \kk_1)$, where $\hh_0 = \hh_1^{\perp}$, $\hh_1
= \overline{\ran(\triangle_{R})}$, $\kk_0 =
\kk_1^{\perp}$, and $\kk_1 =
\overline{\ran(\triangle_{R^*}R)}$,
   \item[(iii)] $R_1 \in \ogr(\hh_1, \kk_1)$ is a
pure contraction with dense range.
   \end{enumerate}
   \end{theorem}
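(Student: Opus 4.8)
The plan is to derive the whole statement from two facts already at hand. Since the character $\chi$ is in particular a state annihilating $G$ and $R$ is non-isometric, Theorem~\ref{soorh}($\alpha$) applies and gives the intertwining relation $R\pi(a)=\rho(a)R$ for all $a\in\ascr$; and by the implication (i)$\Rightarrow$(ii) of Theorem~\ref{rconrep} applied to $\chi$,
\begin{align*}
\pi(a)=R^*\rho(a)R+\chi(a)\triangle_{R},\qquad a\in\ascr. \tag{$\star$}
\end{align*}
(Alternatively, $(\star)$ comes from \eqref{xzaw} together with the observation that any maximal ideal of $\ascr$ containing the generating set $G$ equals the closed ideal generated by $G$, which forces the character $\phi$ of Theorem~\ref{soorh}($\alpha$) to coincide with $\chi$.)

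To prove (i), I combine $(\star)$ with $R^*\rho(a)R=R^*R\,\pi(a)=(I-\triangle_{R})\pi(a)$ to get $\triangle_{R}\pi(a)=\chi(a)\triangle_{R}$ for all $a$; passing to adjoints gives $\pi(a)\triangle_{R}=\chi(a)\triangle_{R}$, so $\triangle_{R}$ commutes with $\pi$ and, moreover, $\pi(a)$ acts as the scalar $\chi(a)$ on $\ran\triangle_{R}$. Taking adjoints in $R\pi=\rho R$ gives $\pi(a)R^*=R^*\rho(a)$, whence $RR^*\rho(a)=R\pi(a)R^*=\rho(a)RR^*$, i.e.\ $\triangle_{R^*}$ commutes with $\rho$.

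For (ii), I put $\hh_1=\overline{\ran(\triangle_{R})}$, $\hh_0=\hh_1^{\perp}\,(=\ker\triangle_{R})$, $\kk_1=\overline{\ran(\triangle_{R^*}R)}$, $\kk_0=\kk_1^{\perp}$ and check conditions (i)--(v) of Definition~\ref{dsfim}. The identity $R\triangle_{R}=\triangle_{R^*}R$ from \eqref{dwytry} shows $\kk_1=\overline{R(\hh_1)}$ and that $\hh_1$ is $R^*R$-invariant, so $R^*(\kk_1)\subseteq\overline{R^*R(\hh_1)}\subseteq\hh_1$; hence $R(\hh_0)\subseteq\kk_0$, and both $R=R_0\oplus R_1$ and $R^*$ are block-diagonal with respect to $(\hh_0\oplus\hh_1,\ \kk_0\oplus\kk_1)$, where $R_0=R|_{\hh_0}$ and $R_1=R|_{\hh_1}$. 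As $\triangle_{R}$ is self-adjoint and commutes with every $\pi(a)$, the subspaces $\hh_0,\hh_1$ reduce $\pi$; and since $\triangle_{R^*}$ commutes with $\rho$, the relation $\triangle_{R^*}R\,\pi(a)=\triangle_{R^*}\rho(a)R=\rho(a)\triangle_{R^*}R$ shows that $\ran(\triangle_{R^*}R)$ is $\rho$-invariant, so (running the same computation with $a^*$) $\kk_0,\kk_1$ reduce $\rho$. On $\ran\triangle_{R}$, hence on $\hh_1$, $\pi(a)$ is the scalar $\chi(a)$; and for $h\in\hh_1$ one has $\rho(a)Rh=R\pi(a)h=\chi(a)Rh$, so $\rho(a)$ is the scalar $\chi(a)$ on $\kk_1=\overline{R(\hh_1)}$; this is Definition~\ref{dsfim}(iii). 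Since $\hh_0=\ker\triangle_{R}$, for $h\in\hh_0$ we get $\|R_0h\|^2=\langle R^*Rh,h\rangle=\|h\|^2$, so $R_0$ is isometric; and restricting $(\star)$ to $\hh_0$, where $\triangle_{R}$ vanishes, yields $\pi(a)|_{\hh_0}=R_0^*\bigl(\rho(a)|_{\kk_0}\bigr)R_0$ once one uses that $R^*$ sends $\kk_0$ into $\hh_0$. Thus $(\pi,\rho,R)$ has an isometric $\chi$-decomposition relative to $(\hh_0,\hh_1,\kk_0,\kk_1)$.

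For (iii), $R_1$ has dense range because $\kk_1=\overline{R(\hh_1)}=\overline{\ran R_1}$ by construction; and $R_1$ is a pure contraction since, for $h\in\hh_1$, $\|R_1h\|^2=\|h\|^2-\|\triangle_{R}^{1/2}h\|^2$ while $\hh_1=(\ker\triangle_{R})^{\perp}=(\ker\triangle_{R}^{1/2})^{\perp}$, so $\triangle_{R}^{1/2}h\neq 0$ for every nonzero $h\in\hh_1$. I expect the only delicate point to be the matching of the defect subspaces on the two sides, i.e.\ the verification that both $R$ and $R^*$ are block-diagonal with respect to $(\hh_0\oplus\hh_1,\ \kk_0\oplus\kk_1)$; this rests on $R\triangle_{R}=\triangle_{R^*}R$ and on the $R^*R$-invariance of $\hh_1$, after which conditions (i)--(v) of Definition~\ref{dsfim} and the purity of $R_1$ follow routinely from $(\star)$ and the intertwining relation.
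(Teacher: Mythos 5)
Your proof is correct and follows essentially the same route as the paper: apply Theorem~\ref{soorh}($\alpha$) with $\phi=\chi$ to obtain the intertwining relation and \eqref{xzaw}, deduce that $\triangle_R$ and $\triangle_{R^*}$ commute with $\pi$ and $\rho$, and assemble the $\chi$-decomposition from $R\triangle_R=\triangle_{R^*}R$, with the purity and dense-range arguments as in the paper. Your small variations (obtaining $\rho(a)|_{\kk_1}=\chi(a)I_{\kk_1}$ directly from $\kk_1=\overline{R(\hh_1)}$ and the intertwining, and $R(\hh_0)\subseteq\kk_0$ via $R^*(\kk_1)\subseteq\hh_1$) are only cosmetic, and the parenthetical detour through Theorem~\ref{rconrep} or ``maximal ideals containing $G$'' is unnecessary, since Theorem~\ref{soorh} applied with $\phi:=\chi$ already gives $(\star)$.
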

See Proposition~\ref{ghkds} for a converse (in a
certain sense) to Theorem~\ref{soorh-2}. Other
$\chi$-decompositions are established in
Theorem~\ref{rroiu} and Corollary~\ref{uspc} below.
Before stating them, we define
   \begin{align} \label{mycj}
\mcal_{\chi}(\pi) = \bigcap_{a\in \ascr} \nul(\pi(a) -
\chi(a) I),
   \end{align}
where $\pi\colon \ascr \to \ogr(\hh)$ is a
representation of a unital $C^*$-algebra $\ascr$ on a
Hilbert space $\hh$, and $\chi$ is a character of
$\ascr$. The space $\mcal_{\chi}(\pi)$ may be viewed
as a generalization of a joint eigenspace of a
$d$-tuple of commuting operators (see
Lemma~\ref{versyn}).
   \begin{theorem} \label{rroiu}
Let $G$ be a hyperrigid subset of a unital
$C^*$-algebra $\ascr$, and let $\chi$ be a character
of $\ascr$ such that $G \subseteq \ker\chi$. Suppose
that $\pi\colon \ascr \to \ogr(\hh)$ and $\rho\colon
\ascr \to \ogr(\kk)$ are representations of $\ascr$ on
separable Hilbert spaces $\hh$ and $\kk$,
respectively, and $R \in \ogr(\mathcal{H},\kk)$ is a
contraction such~that $\pi(g) = R^* \rho(g) R$ for
every $g\in G$. Then the following statements hold{\em
:}
   \begin{itemize}
    \item[(i)] $\mcal_{\chi}(\pi)$ reduces $\pi$, $R^*R$ and
$\triangle_{R}$, where $\mcal_{\chi}(\pi)$
is as in \eqref{mycj},
   \item[(ii)]  $\lcal_{\chi} :=
\bigvee_{a \in \ascr} {\rho(a)} R
(\mcal_{\chi}(\pi))$ and
$\lcal_{\chi_{\perp}} := \bigvee_{a \in
\ascr} {\rho(a)}R (\mcal_{\chi}(\pi)^\perp)$
reduce~$\rho$,
   \item[(iii)] $\lcal_{\chi} =
\overline{R(\mcal_{\chi}(\pi))}$,
$\lcal_{\chi_{\perp}} =
\overline{R(\mcal_{\chi}(\pi)^{\perp})}$ and
$\lcal_{\chi} \perp \lcal_{\chi_{\perp}}$,
    \item[(iv)] $R(\mcal_{\chi}(\pi)) \subseteq \mcal_{\chi}(\rho)$ and $\ran(\triangle_{R})
\subseteq \mcal_{\chi}(\pi)$,
    \item[(v)] $(\pi,\rho,R)$ has an isometric
$\chi$-decomposition relative to $(\hh_0,
\hh_1, \kk_0, \kk_1)$, where
$\hh_0=\mcal_{\chi}(\pi)^\perp$,
$\hh_1=\mcal_{\chi}(\pi)$,
$\kk_0=\lcal_{\chi}^\perp$ and
$\kk_1=\lcal_{\chi}$,
    \item[(vi)] $R_0
\pi(a)|_{\hh_0} = \rho(a)|_{\kk_0} R_0$
for all $a \in \ascr.$
   \end{itemize}
Moreover, if $\kk = \bigvee_{a \in \ascr}
\rho(a)R(\hh)$, then $\lcal_{\chi_{\perp}} =
\lcal_{\chi}^\perp$ and $R_0$ is unitary, so
$(\pi,\rho,R)$ has a unitary
$\chi$-decomposition relative to $(\hh_0,
\hh_1, \kk_0, \kk_1)$.
   \end{theorem}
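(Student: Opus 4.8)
The plan is to build everything on the key structural identity guaranteed by hyperrigidity, namely equation~\eqref{xzaw} from Theorem~\ref{soorh}, which under the present hypotheses gives $\pi(a) = R^*\rho(a)R + \chi(a)\triangle_R$ for all $a\in\ascr$ (here one uses that $G\subseteq\ker\chi$, so if $R$ is non-isometric the dichotomy in Theorem~\ref{soorh} puts us in case $(\alpha)$ with the annihilating state being a character; if $R$ happens to be isometric, then $\triangle_R=0$ and the identity is just $\pi=R^*\rho R$ with the proof simplifying). From this one formula I would extract all six items. For (i) and~(iv), note that $h\in\mcal_\chi(\pi)$ means $\pi(a)h=\chi(a)h$ for all $a$; plugging $a$ into \eqref{xzaw} and then applying it to such an $h$, together with the observation that $\ran(\triangle_R)$ consists of vectors on which $\pi$ acts like $\chi$ (this is the $\ran(\triangle_R)\subseteq\mcal_\chi(\pi)$ claim, proved directly from \eqref{xzaw} by writing $\triangle_R = I - R^*R$ and computing $\pi(a)\triangle_R$), should show $\pi$, $R^*R$ and $\triangle_R$ all leave $\mcal_\chi(\pi)$ invariant; since $\mcal_\chi(\pi)$ is manifestly $\pi(\ascr)'$-invariant as an intersection of kernels, it is automatically reducing, giving~(i). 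The inclusion $R(\mcal_\chi(\pi))\subseteq\mcal_\chi(\rho)$ in~(iv) follows by computing, for $h\in\mcal_\chi(\pi)$, that $\rho(a)Rh = R\pi(a)h + (\text{defect term})$ — more precisely one uses $R\triangle_R$ and $\triangle_{R^*}R$ bookkeeping, or simply notes $R^*\rho(a)Rh = \pi(a)h-\chi(a)\triangle_R h = \chi(a)h - \chi(a)\triangle_R h = \chi(a)R^*Rh$, and combines this with a norm computation on $\triangle_{R^*}^{1/2}\rho(a)Rh$ using $\|\rho(a)Rh\|^2 = \|R^*\rho(a)Rh\|^2 + \|\triangle_{R^*}^{1/2}\rho(a)Rh\|^2$ to force $\rho(a)Rh = \chi(a)Rh$.

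For (ii) and (iii): the spaces $\lcal_\chi$ and $\lcal_{\chi_\perp}$ are by construction $\rho(\ascr)$-invariant, and since $\rho$ is a $*$-representation an invariant subspace that is also the closed span of $\rho(\ascr)$ applied to something is reducing — I would spell this out via $\rho(a^*)\rho(b)Rk = \rho(a^*b)Rk$. The equalities $\lcal_\chi = \overline{R(\mcal_\chi(\pi))}$ and $\lcal_{\chi_\perp} = \overline{R(\mcal_\chi(\pi)^\perp)}$ follow because on $\mcal_\chi(\pi)$ one has $\rho(a)Rh = \chi(a)Rh$ (just established in~(iv)), so the span collapses to $\overline{R(\mcal_\chi(\pi))}$; for the $\perp$-part one uses that $\mcal_\chi(\pi)^\perp$ reduces $R^*R$ hence $R(\mcal_\chi(\pi)^\perp) \perp R(\mcal_\chi(\pi))$ after feeding through the identity $\pi(a) = R^*\rho(a)R+\chi(a)\triangle_R$ restricted to the reducing pieces, and then the inclusion $\overline{R(\mcal_\chi(\pi)^\perp)}\subseteq\lcal_{\chi_\perp}$ is trivial while the reverse uses $\rho(a)Rk\in\overline{R(\ascr\cdot k)}$-type arguments together with $\lcal_{\chi_\perp}\perp\lcal_\chi$. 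The orthogonality $\lcal_\chi\perp\lcal_{\chi_\perp}$ is the crux and reduces to showing $\langle \rho(a)Rh,\rho(b)Rk\rangle = 0$ for $h\in\mcal_\chi(\pi)$, $k\in\mcal_\chi(\pi)^\perp$, which by the $*$-property becomes $\langle R^*\rho(b^*a)Rh, k\rangle$, and applying \eqref{xzaw} this is $\langle \pi(b^*a)h - \chi(b^*a)\triangle_R h, k\rangle = \langle \chi(b^*a)(I-\triangle_R)h,k\rangle = \chi(b^*a)\langle R^*Rh, k\rangle = 0$ since $R^*Rh\in\mcal_\chi(\pi)$ by~(i) and $k\perp\mcal_\chi(\pi)$.

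Items (v) and (vi) are then assembled: set $\hh_1 = \mcal_\chi(\pi)$, $\hh_0 = \hh_1^\perp$, $\kk_1 = \lcal_\chi$, $\kk_0 = \lcal_\chi^\perp$. Conditions Definition~\ref{dsfim}(i)--(iii) are immediate from~(i), the definition of $\mcal_\chi(\pi)$ and $\mcal_\chi(\rho)$, and~(iv). For the block-diagonal form of $R$ in~(iv) of the definition, I need $R(\hh_1)\subseteq\kk_1$ (clear: $Rh = \rho(e)Rh\in\lcal_\chi$) and $R(\hh_0)\subseteq\kk_0$, i.e.\ $R(\mcal_\chi(\pi)^\perp)\perp\lcal_\chi$, which follows from $\lcal_\chi = \overline{R(\mcal_\chi(\pi))}$ plus the orthogonality of $R(\mcal_\chi(\pi))$ and $R(\mcal_\chi(\pi)^\perp)$ noted above. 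That $R_0$ is an isometry: for $h\in\hh_0 = \mcal_\chi(\pi)^\perp$, by~(i) $\triangle_R h\in\hh_1$, but $\triangle_R$ also preserves $\hh_0$ by~(i), so $\triangle_R h\in\hh_0\cap\hh_1 = \{0\}$, whence $R^*Rh = h$ and $\|R_0 h\| = \|Rh\| = \|h\|$. Then~(v) of Definition~\ref{dsfim} and item~(vi) of the theorem are the same statement: restrict \eqref{xzaw} to $\hh_0$, where $\triangle_R$ vanishes, to get $\pi(a)|_{\hh_0} = R_0^*\rho(a)|_{\kk_0}R_0$, and rewrite this using $R_0^*R_0 = I_{\hh_0}$ as the intertwining $R_0\pi(a)|_{\hh_0} = \rho(a)|_{\kk_0}R_0$ (multiply on the left by $R_0$ and check ranges land correctly, using that $\kk_0$ reduces $\rho$). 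For the ``moreover'' clause, under minimality $\kk = \bigvee_{a}\rho(a)R(\hh)$; decomposing $\hh = \hh_0\oplus\hh_1$ and using $\rho(a)R(\hh_1)\subseteq\lcal_\chi = \kk_1$ and $\rho(a)R(\hh_0)\subseteq\lcal_{\chi_\perp}$, we get $\kk = \lcal_\chi \vee \lcal_{\chi_\perp} = \kk_1\oplus\lcal_{\chi_\perp}$, so $\kk_0 = \kk_1^\perp = \lcal_{\chi_\perp}$; since $R_0$ is an isometry from $\hh_0$ onto a dense subspace of $\overline{R(\hh_0)} = \overline{R(\mcal_\chi(\pi)^\perp)} = \lcal_{\chi_\perp} = \kk_0$ (using~(iii)), $R_0$ is surjective, hence unitary, so the $\chi$-decomposition is unitary. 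The main obstacle I anticipate is the orthogonality $\lcal_\chi\perp\lcal_{\chi_\perp}$ and the two range-identities in~(iii): these are exactly where the defect-operator identity \eqref{xzaw} does its essential work, and care is needed to track which of $\triangle_R$, $\triangle_{R^*}$, $R^*R$, $RR^*$ acts on which side and on which reducing subspace — everything else is bookkeeping with $*$-representations and closed invariant subspaces.
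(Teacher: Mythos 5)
Your overall route is the paper's: everything is funneled through Theorem~\ref{soorh}($\alpha$) (the identity \eqref{xzaw} together with the intertwining $R\pi=\rho R$) and the commutation facts of Theorem~\ref{soorh-2}, and your assembly of (v), (vi) and the ``moreover'' part is sound; your orthogonality computation $\is{\rho(a)Rh}{\rho(b)Rk}=\chi(b^*a)\is{R^*Rh}{k}=0$ is in fact a little slicker than the paper's, which first splits $a,b$ modulo $\ker\chi$ and uses the relation $\rho(a)R|_{\mcal_{\chi}(\pi)}=0$ for $a\in\ker\chi$.

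One step, however, fails as written: the ``norm computation'' you propose for (iv). Knowing $R^*\rho(a)Rh=\chi(a)R^*Rh$ for $h\in\mcal_{\chi}(\pi)$ and inserting $v=\rho(a)Rh$ into $\|v\|^2=\|R^*v\|^2+\|\triangle_{R^*}^{1/2}v\|^2$ gives no control whatsoever on the defect term $\|\triangle_{R^*}^{1/2}\rho(a)Rh\|$ (the identity just returns $\|\rho(a)Rh\|^2$ on both sides), so it cannot ``force'' $\rho(a)Rh=\chi(a)Rh$. The repair is immediate with tools you already cite: either use the intertwining from Theorem~\ref{soorh}($\alpha$) directly, $\rho(a)Rh=R\pi(a)h=\chi(a)Rh$, or run the quadratic trick as the paper does --- for $a\in\ker\chi$ also $a^*a\in\ker\chi$, hence $\|\rho(a)Rh\|^2=\is{R^*\rho(a^*a)Rh}{h}=\chi(a^*a)\is{R^*Rh}{h}=0$. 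Two further points are glossed rather than wrong. First, $\ran(\triangle_R)\subseteq\mcal_{\chi}(\pi)$ does not come ``directly from \eqref{xzaw}'': to compute $\pi(a)\triangle_R$ you must replace $R^*\rho(a)R$ by $\pi(a)R^*R$, which is exactly the intertwining (this is how the paper gets $\pi(a)\triangle_R=\chi(a)\triangle_R$ in the proof of Theorem~\ref{soorh-2}); once this is acknowledged, your deduction of (i) (self-adjointness of $R^*R$ and $\triangle_R$ plus invariance) goes through. Second, passing from $\pi(a)|_{\hh_0}=R_0^*\rho(a)|_{\kk_0}R_0$ to (vi) is not merely ``multiply on the left by $R_0$'': one must show $(I-R_0R_0^*)\rho(a)|_{\kk_0}R_0=0$, which requires the multiplicativity argument of \cite[Lemma~3.2]{P-S-roots23} (as in Theorem~\ref{delt}), or, more simply, restrict the global relation $R\pi(a)=\rho(a)R$ to the blocks $\hh_0,\kk_0$, which is what the paper does. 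With these repairs your proposal is a correct proof along essentially the same lines as the paper's.
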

Finally, we combine the two orthogonal
decompositions given in
Theorems~\ref{rconrep} and \ref{rroiu}.
As a consequence, we obtain a
decomposition into three parts. Below,
$\dfi(V)$ denotes the {\em defect
space} of an isometry $V\in
\ogr(\hh,\kk)$, defined by
   \begin{align*}
\dfi(V)=\ran(V)^{\perp} = \nul(V^*).
   \end{align*}
   \begin{corollary} \label{uspc}
Let $G$ be a hyperrigid subset of a unital
$C^*$-algebra $\ascr$, and let $\chi$ be a character
of $\ascr$ such that $G \subseteq \ker\chi$. Suppose
that $\pi\colon \ascr \to \ogr(\hh)$ and $\rho\colon
\ascr \to \ogr(\kk)$ are representations of $\ascr$ on
separable Hilbert spaces $\hh$ and $\kk$,
respectively, and $R \in \ogr(\mathcal{H},\kk)$ is a
contraction such that $\pi(g) = R^* \rho(g) R$ for
every $g\in G$, and $\kk = \bigvee_{a \in \ascr}
\rho(a)R(\hh)$. Then
   \begin{enumerate}
   \item[(i)]  $\hh = \hh_I \oplus \hh_{PC}$ and
$\hh_I = \hh_U \oplus \hh_S$, where
$\hh_{I}=\nul(\triangle_R)$, $\hh_{U} =
\mcal_{\chi}(\pi)^{\perp}$, $\hh_{S} =
\nul(\triangle_R) \cap \mcal_{\chi}(\pi)$,
$\hh_{PC}= \overline{\ran(\triangle_R)}$,
   \item[(ii)] $\kk = \kk_{I} \oplus \kk_{PC}$ and
$\kk_{I} = \kk_{U} \oplus \kk_{S}$, where
$\kk_{I} = \overline{\ran(R
\triangle_R)}^{\perp}$,
$\kk_{U}=\lcal_{\chi}^{\perp}$, $\kk_{S} =
\overline{\ran(R\triangle_R)}^{\perp} \cap
\lcal_{\chi}$, $\kk_{PC} =
\overline{\ran(R\triangle_R)}$
$($$\lcal_{\chi}$ is as in {\em
Theorem~\ref{rroiu}}$)$,
   \item[(iii)] the spaces $\hh_{U}$,
$\hh_{S}$ and $\hh_{PC}$ reduce $\pi$, and
$\pi|_{\hh_{U}^{\perp}}=\chi
I_{\hh_{U}^{\perp}}$,
   \item[(iv)] the spaces $\kk_{U}$,
$\kk_{S}$ and $\kk_{PC}$ reduce $\rho$, and
$\rho|_{\kk_{U}^{\perp}}=\chi
I_{\kk_{U}^{\perp}}$,
   \item[(v)] $R=R_{I} \oplus R_{PC}$ and
$R_{I}=R_{U} \oplus R_{S}$, where $R_{I} \in
\ogr(\hh_{I},\kk_{I})$, $R_{U} \in
\ogr(\hh_{U},\kk_{U})$, $R_{S}\in
\ogr(\hh_{S},\kk_{S})$ and $R_{PC}\in
\ogr(\hh_{PC}, \kk_{PC})$,
   \item[(vi)] $R_{U}$ is unitary, $R_{S}$ is an isometry
and $R_{PC}$ is a pure contraction,
   \item[(vii)] $\pi(a)|_{\hh_{U}} = R_{U}^* \rho(a)|_{\kk_{U}}
R_{U}$ for every $a \in \ascr$,
   \item[(viii)] $\dfi(R_{I}) = \dfi(R_{S})=
\nul(R^*)$.
   \end{enumerate}
   \end{corollary}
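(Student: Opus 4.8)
\emph{Proof plan.} The plan is to superimpose the two orthogonal decompositions of $(\pi,\rho,R)$ that are already available and to check that one refines the other: the \emph{defect decomposition} coming from Theorem~\ref{soorh-2} (the explicit $\triangle_R$-decomposition of the kind produced by Theorem~\ref{rconrep}) and the \emph{joint-eigenspace decomposition} coming from Theorem~\ref{rroiu} (built from $\mcal_\chi(\pi)$ and $\lcal_\chi$). First I would dispose of the trivial case: if $R$ is an isometry, then $\triangle_R=0$, so $\hh_{PC}=\overline{\ran(\triangle_R)}=\{0\}$ and $\kk_{PC}=\overline{\ran(R\triangle_R)}=\{0\}$, hence $\hh_I=\hh$, $\kk_I=\kk$, and all of (i)--(viii) reduce to Theorem~\ref{rroiu} (whose hypotheses, minimality included, are exactly those assumed here), with $R_U$ unitary and $R_S=R|_{\mcal_\chi(\pi)}$ an isometry. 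So from here on I would assume $R$ non-isometric, which makes Theorem~\ref{soorh-2} applicable.

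Next I would record the raw material. From Theorem~\ref{rroiu} (using the minimality hypothesis $\kk=\bigvee_{a\in\ascr}\rho(a)R(\hh)$): $\mcal_\chi(\pi)^\perp$ reduces $\pi$, $\lcal_\chi^\perp$ reduces $\rho$, $R$ restricts to a \emph{unitary} $R_U\colon\mcal_\chi(\pi)^\perp\to\lcal_\chi^\perp$ with $\pi(a)|_{\mcal_\chi(\pi)^\perp}=R_U^*\rho(a)|_{\lcal_\chi^\perp}R_U$ for all $a$, while $\pi|_{\mcal_\chi(\pi)}=\chi I$ and $\rho|_{\lcal_\chi}=\chi I$; moreover $\ran(\triangle_R)\subseteq\mcal_\chi(\pi)$ and $\lcal_\chi=\overline{R(\mcal_\chi(\pi))}$. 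From Theorem~\ref{soorh-2} (together with $\triangle_{R^*}R=R\triangle_R$): $\hh=\nul(\triangle_R)\oplus\overline{\ran(\triangle_R)}=:\hh_I\oplus\hh_{PC}$ and $\kk=\overline{\ran(R\triangle_R)}^\perp\oplus\overline{\ran(R\triangle_R)}=:\kk_I\oplus\kk_{PC}$, these summands reducing $\pi$ on $\hh$ and $\rho$ on $\kk$, with $\pi|_{\hh_{PC}}=\chi I$, $\rho|_{\kk_{PC}}=\chi I$, and $R=R_I\oplus R_{PC}$ where $R_I$ is isometric and $R_{PC}$ is a pure contraction with dense range.

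The core step is the nesting of the two decompositions. From $\ran(\triangle_R)\subseteq\mcal_\chi(\pi)$ (closed) I get $\hh_{PC}\subseteq\mcal_\chi(\pi)$, hence $\hh_U:=\mcal_\chi(\pi)^\perp\subseteq\hh_{PC}^\perp=\hh_I$, so $\hh_I=\hh_U\oplus\hh_S$ with $\hh_S:=\nul(\triangle_R)\cap\mcal_\chi(\pi)$; dually, $\triangle_R$ takes values in $\overline{\ran(\triangle_R)}\subseteq\mcal_\chi(\pi)$, so $\ran(R\triangle_R)\subseteq R(\mcal_\chi(\pi))\subseteq\lcal_\chi$, giving $\kk_{PC}\subseteq\lcal_\chi$ and $\kk_U:=\lcal_\chi^\perp\subseteq\kk_{PC}^\perp=\kk_I$, so $\kk_I=\kk_U\oplus\kk_S$ with $\kk_S:=\kk_{PC}^\perp\cap\lcal_\chi$; this is (i)--(ii). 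Then (iii)--(iv) follow because $\hh_S$ is an intersection of subspaces reducing $\pi$ and $\hh_U^\perp=\hh_S\oplus\hh_{PC}\subseteq\mcal_\chi(\pi)$, with the symmetric statement on $\kk$ (using $\kk_U^\perp=\kk_S\oplus\kk_{PC}\subseteq\lcal_\chi$ and $\rho|_{\lcal_\chi}=\chi I$). For the operator part, $\hh_S\subseteq\hh_I\cap\mcal_\chi(\pi)$ forces $R(\hh_S)\subseteq\kk_I\cap\lcal_\chi=\kk_{PC}^\perp\cap\lcal_\chi=\kk_S$, so $R_S:=R|_{\hh_S}$ maps into $\kk_S$; combined with $R|_{\hh_U}=R_U$ and the orthogonality of $\kk_U,\kk_S,\kk_{PC}$, this gives $R=R_U\oplus R_S\oplus R_{PC}$ and $R_I=R_U\oplus R_S$, which is (v); then $R_U$ unitary (Theorem~\ref{rroiu}), $R_S$ an isometry (restriction of the isometry $R_I$) and $R_{PC}$ a pure contraction (Theorem~\ref{soorh-2}) give (vi), while (vii) is exactly the intertwining already extracted from Theorem~\ref{rroiu}. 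Finally, computing $\nul(R^*)$ blockwise via $R^*=R_U^*\oplus R_S^*\oplus R_{PC}^*$, with $\nul(R_U^*)=\{0\}$, $\nul(R_{PC}^*)=\{0\}$ (dense range) and $\nul(R_S^*)=\dfi(R_S)$, yields $\nul(R^*)=\dfi(R_S)$; the same computation with $R_I^*=R_U^*\oplus R_S^*$ gives $\dfi(R_I)=\dfi(R_S)$, establishing (viii).

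I do not anticipate a genuine obstacle, since both substantive decompositions are already in hand and what remains is their subordination. The only part requiring a real (if brief) argument is that subordination — the inclusions $\hh_{PC}\subseteq\mcal_\chi(\pi)$ and $\kk_{PC}\subseteq\lcal_\chi$, which rest on parts (iii)--(iv) of Theorem~\ref{rroiu}; once these hold, identifying $\kk_I\cap\lcal_\chi=\kk_S$ and tracking how $R$ acts on each block is routine bookkeeping.
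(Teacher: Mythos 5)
Your proposal is correct and follows essentially the same route as the paper: superimposing the decomposition of Theorem~\ref{soorh-2} on that of Theorem~\ref{rroiu}, with the nesting supplied by Theorem~\ref{rroiu}(iii)--(iv), is exactly how the paper derives (i)--(vii). The only deviations are minor and sound: for (viii) you compute $\nul(R^*)$ blockwise (using that $R_U$ is unitary and $R_{PC}$ has dense range) instead of invoking Lemma~\ref{rdrs} as the paper does, and you explicitly dispose of the isometric case, which the paper leaves implicit even though Theorem~\ref{soorh-2} is stated for non-isometric $R$.
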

   \begin{rem} In all results of this paper where
the set $G$ appears (in particular in this section,
the next section, and Section \ref{Sec.5.5}), $G$ is
assumed to be a countable set of generators of the
underlying $C^*$-algebra. Hence the $C^*$-algebra is
automatically separable. A careful inspection of the
proofs shows that these results remain valid even when
the Hilbert spaces $\hh$ and $\kk$ are not assumed to
be separable (cf. \cite[Appendix B]{P-S-24}).
   \hfill $\diamondsuit$
   \end{rem}
The proofs of Theorems~\ref{soorh} and~\ref{soorh-2}
are given in Section~\ref{Sec.5}. In this section we
also show that translating elements of a hyperrigid
set by complex numbers typically leads to a situation
in which the intertwining relation $R\pi=\rho R$ fails
to hold and the set $J$ is not an ideal (see
Proposition~\ref{glmp}). The proofs of
Theorem~\ref{ogylu} and Corollary~\ref{caly} are
presented in Section~\ref{Sec.5.5}. This section also
provides characterizations of hyperrigidity by means
of generalized joint eigenspaces associated with the
underlying representations (see
Propositions~\ref{gfza} and~\ref{jger}). The proofs of
Theorems~\ref{rconrep} and~\ref{rroiu} and
Corollary~\ref{uspc} are in Section~\ref{Sect.8},
where two corollaries to Theorem~\ref{rconrep} are
also included.
   \section{Hyperrigidity via $d$-tuples of operators}
   A $d$-tuple $\tbold=(T_j)_{j=1}^d \in \ogr(\hh)^d$
of operators on a Hilbert space $\hh$ is called {\em
normal} if each component $T_j$ is normal and the
components commute with one another. Let $C(X)$ denote
the $C^*$-algebra of all continuous complex-valued
functions on $X$, equipped with the supremum norm.
   \begin{prop} \label{dtdw}
Let $X$ be a compact subset of $\cbb^d$ $($with $d\in
\natu$$)$, and let $\lambdab \in X$. Suppose that $G$
is a finite or countably infinite set of generators of
$C(X)$ such that $f(\lambdab)=0$ for every $f\in G$.
Then the following conditions are equivalent{\em :}
   \begin{itemize}
   \item[(i)] $G$ is hyperrigid in $C(X)$,
   \item[(ii)] for all separable Hilbert spaces $\hh$ and $\kk$,
all normal $d$-tuples
$\tbold=(T_j)_{j=1}^d \in \ogr(\hh)^d$
and $\nbold =(N_j)_{j=1}^d\in
\ogr(\kk)^d$ with Taylor spectrum in
$X$, and for every contraction $R\in
\ogr(\hh,\kk)$, if either
$\lambdab\notin
\sigma_{\mathsf{p}}(\tbold)$ or
$T_j=\lambda_j I$ for all $j=1,\ldots,
d$, then
   \begin{align*}
f(\tbold)=R^*f(\nbold)R \;\; \forall f\in G \implies
\;\; RT_j=N_jR \;\; \forall j=1,\ldots, d.
   \end{align*}
   \end{itemize}
   \end{prop}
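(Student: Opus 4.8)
The plan is to derive the proposition from its abstract $C^*$-algebraic counterpart, Theorem~\ref{ogylu}, through the standard correspondence between representations of $C(X)$ and normal $d$-tuples whose Taylor spectrum lies in $X$. Write $z_1,\dots,z_d\in C(X)$ for the coordinate functions and let $\chi\in\mathfrak{M}_{C(X)}$ be evaluation at $\lambdab$, so that $\chi(f)=f(\lambdab)$ and, by hypothesis, $G\subseteq\ker\chi$; since $G$ generates $C(X)$ and $C(X)$ is separable ($X$ being a compact metric space), Theorem~\ref{ogylu} applies with $\ascr=C(X)$ and $\phi=\chi$. The dictionary is: a representation $\pi\colon C(X)\to\ogr(\hh)$ yields, via $T_j:=\pi(z_j)$, a normal $d$-tuple $\tbold$ whose Taylor spectrum equals the support of the spectral measure of $\pi$ and hence lies in $X$, with $\pi(f)=f(\tbold)$ for all $f\in C(X)$; conversely, a normal $d$-tuple with Taylor spectrum in $X$ determines such a $\pi$ through the continuous functional calculus. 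Under this dictionary, $\lambdab\notin\sigma_{\mathsf{p}}(\tbold)$ means $\mcal_{\chi}(\pi)=\{0\}$ (see Lemma~\ref{versyn}), the condition $T_j=\lambda_j I$ for all $j$ means $\pi(a)=\chi(a) I_{\hh}$ for all $a$, and---crucially---the relations $RT_j=N_jR$ for all $j$ are equivalent, by the Fuglede--Putnam theorem (which simultaneously yields $RT_j^*=N_j^*R$), to $R\pi(a)=\rho(a)R$ for all $a\in C(X)$, since $\{z_1,\bar z_1,\dots,z_d,\bar z_d\}$ generates $C(X)$ as a $C^*$-algebra.

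For (i)$\Rightarrow$(ii), assume $G$ is hyperrigid. Given normal $d$-tuples $\tbold,\nbold$ with Taylor spectra in $X$ and a contraction $R$ with $f(\tbold)=R^*f(\nbold)R$ for all $f\in G$, pass to the representations $\pi,\rho$ defined by $\pi(f)=f(\tbold)$ and $\rho(f)=f(\nbold)$; then $\pi(g)=R^*\rho(g)R$ for $g\in G$, so the implication (i)$\Rightarrow$(ii) of Theorem~\ref{ogylu} gives $R\pi(a)=\rho(a)R$ for all $a$, in particular $RT_j=N_jR$ for all $j$. (The side condition on $\tbold$ is not needed here; alternatively one may invoke Theorem~\ref{soorh}($\alpha$) when $R$ is non-isometric and Theorem~\ref{delt} when $R$ is an isometry.)

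For (ii)$\Rightarrow$(i), assume (ii). By the implication (ii)$\Rightarrow$(i) of Theorem~\ref{ogylu}, it suffices to verify condition~(ii) of that theorem: for arbitrary representations $\pi\colon C(X)\to\ogr(\hh)$ and $\rho\colon C(X)\to\ogr(\kk)$ and every contraction $R\in\ogr(\hh,\kk)$, the hypothesis $\pi(g)=R^*\rho(g)R$ for all $g\in G$ should force $R\pi(a)=\rho(a)R$ for all $a\in C(X)$. Let $\tbold,\nbold$ be the associated normal $d$-tuples. The key step is a reduction to the situation covered by~(ii): put $\hh_1:=\mcal_{\chi}(\pi)=\bigcap_{j=1}^d\nul(T_j-\lambda_j I)$ and $\hh_0:=\hh_1^{\perp}$. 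Since each $T_j$ is normal, $\nul(T_j-\lambda_j I)$ reduces $T_j$ and $T_j^*$, so by Stone--Weierstrass and continuity $\hh_0$ and $\hh_1$ reduce $\pi$, with $T_j|_{\hh_1}=\lambda_j I_{\hh_1}$ for all $j$; moreover, as $\hh_1$ is the full joint eigenspace of $\tbold$ at $\lambdab$, one has $\lambdab\notin\sigma_{\mathsf{p}}(\tbold|_{\hh_0})$. Restricting $R$ along $\hh=\hh_0\oplus\hh_1$ gives contractions $R_0=R|_{\hh_0}\colon\hh_0\to\kk$ and $R_1=R|_{\hh_1}\colon\hh_1\to\kk$, and compressing the identity $\pi(g)=R^*\rho(g)R$ to the reducing subspace $\hh_0$, and separately to $\hh_1$ (using $f(\lambdab)=0$ for $f\in G$ in the latter case), yields $f(\tbold|_{\hh_0})=R_0^*f(\nbold)R_0$ and $f(\tbold|_{\hh_1})=R_1^*f(\nbold)R_1$ for all $f\in G$. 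Applying~(ii)---valid since $\tbold|_{\hh_0}$ and $\tbold|_{\hh_1}$ are normal $d$-tuples with Taylor spectrum in $X$---to $(\tbold|_{\hh_0},\nbold,R_0)$, where the first disjunct holds, and to $(\tbold|_{\hh_1},\nbold,R_1)$, where the second holds, we get $RT_jh=N_jRh$ for $h\in\hh_0$ and, since $T_jh=\lambda_jh$ on $\hh_1$, also for $h\in\hh_1$; hence $RT_j=N_jR$ for all $j$. By Fuglede--Putnam, $RT_j^*=N_j^*R$ as well, so $R$ intertwines $\pi$ and $\rho$ on the dense $*$-subalgebra of $C(X)$ generated by $z_1,\dots,z_d$, whence $R\pi(a)=\rho(a)R$ for all $a\in C(X)$, as required.

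The main obstacle is the direction (ii)$\Rightarrow$(i), where two points stand out. First, the orthogonal splitting of $\hh$ along the joint eigenspace $\mcal_{\chi}(\pi)$: this is exactly what makes condition~(ii) usable, since its two disjuncts---no joint eigenvalue $\lambdab$, or $T_j=\lambda_jI$ for all $j$---correspond precisely to the summands $\hh_0$ and $\hh_1$. Second, upgrading the coordinatewise intertwining $RT_j=N_jR$ to the algebra-wide relation $R\pi=\rho R$, which is where the Fuglede--Putnam theorem is essential. The remaining ingredients---the functional-calculus dictionary, separability of $C(X)$, and the compressions to reducing subspaces---are routine.
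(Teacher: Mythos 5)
Your proof is correct and follows essentially the same route as the paper: the paper derives the proposition from Proposition~\ref{gfza} together with Proposition~\ref{ryled}, Lemma~\ref{versyn} and the Fuglede--Putnam/Stone--Weierstrass equivalence \eqref{fdea}, and your splitting of $\hh$ along the joint eigenspace $\mcal_{\chi}(\pi)$ is precisely the argument that proves Proposition~\ref{gfza}, rewritten in the concrete $d$-tuple setting and fed into Theorem~\ref{ogylu}. The only cosmetic difference is that you keep $\nbold$ acting on all of $\kk$ rather than cutting down to $\bigvee_{a}\rho(a)R(\hh_j)$ as the paper's proof of Proposition~\ref{gfza} does, which is harmless since condition (ii) imposes no minimality requirement.
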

   \begin{prop} \label{dttr}
Let $X$ be a compact subset of $\cbb^d$ $($with $d\in
\natu$$)$, and let $\lambdab \in X$. Suppose that $G$
is a finite or countably infinite set of generators of
$C(X)$ such that $f(\lambdab)=0$ for every $f\in G$,
and that there exists $g_0\in G$ such that
$g_0^{-1}((0,\infty)) = X \setminus \{\lambdab\}$.
Then the following conditions are equivalent{\em :}
   \begin{itemize}
   \item[(i)] $G$ is hyperrigid in $C(X)$,
   \item[(ii)] for all separable Hilbert spaces $\hh$ and $\kk$,
all normal $d$-tuples
$\tbold=(T_j)_{j=1}^d \in \ogr(\hh)^d$
and $\nbold =(N_j)_{j=1}^d \in
\ogr(\kk)^d$ with Taylor spectrum in
$X$, and for every contraction $R\in
\ogr(\hh,\kk)$, if $\lambdab\notin
\sigma_{\mathsf{p}}(\tbold)$, then
   \begin{align*}
f(\tbold)=R^*f(\nbold)R \;\; \forall f\in G \implies
\;\; RT_j=N_jR \;\; \forall j=1,\ldots, d.
   \end{align*}
   \end{itemize}
   \end{prop}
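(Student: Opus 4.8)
The plan is to derive Proposition~\ref{dttr} from Proposition~\ref{dtdw}, all of whose hypotheses are already present here (the compact $X\subseteq\cbb^d$, the point $\lambdab\in X$, the generating set $G$ of $C(X)$ with $f(\lambdab)=0$ for $f\in G$). The one extra ingredient, the function $g_0$, will be used only to handle a constant $d$-tuple $\tbold$; this is precisely the case in which condition~(ii) of Proposition~\ref{dttr} is weaker than condition~(ii) of Proposition~\ref{dtdw}, since when $T_j=\lambda_j I_{\hh}$ and $\hh\neq\{0\}$ one has $\lambdab\in\sigma_{\mathsf{p}}(\tbold)$.

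The implication (i)$\Rightarrow$(ii) uses nothing beyond Proposition~\ref{dtdw} and not $g_0$: if $G$ is hyperrigid then condition~(ii) of Proposition~\ref{dtdw} holds, and the requirement ``$\lambdab\notin\sigma_{\mathsf{p}}(\tbold)$'' in Proposition~\ref{dttr}(ii) is one of the two alternatives allowed there, so Proposition~\ref{dttr}(ii) follows as a special case.

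For (ii)$\Rightarrow$(i) I would invoke Proposition~\ref{dtdw} again and verify its condition~(ii): given normal $d$-tuples $\tbold=(T_j)_{j=1}^d$, $\nbold=(N_j)_{j=1}^d$ with Taylor spectrum in $X$ and a contraction $R\in\ogr(\hh,\kk)$ with $f(\tbold)=R^*f(\nbold)R$ for all $f\in G$, under the disjunctive hypothesis there, I must produce $RT_j=N_jR$. If $\lambdab\notin\sigma_{\mathsf{p}}(\tbold)$, this is exactly what Proposition~\ref{dttr}(ii) gives. If instead $T_j=\lambda_j I_{\hh}$ for all $j$, then $f(\tbold)=f(\lambdab)I_{\hh}=0$ for every $f\in G$, so $R^*f(\nbold)R=0$ for all $f\in G$; applying this with $f=g_0$ and using $g_0\Ge 0$ on $X$ (hence $g_0(\nbold)\Ge 0$ by the continuous functional calculus for $\nbold$, whose Taylor spectrum lies in $X$), I get $\|g_0(\nbold)^{1/2}Rh\|^2=\is{R^*g_0(\nbold)Rh}{h}=0$, so $g_0(\nbold)^{1/2}R=0$ and thus $g_0(\nbold)R=0$. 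Since $g_0$ is continuous, nonnegative, and vanishes on $X$ only at $\lambdab$, this forces $\ran R\subseteq\nul g_0(\nbold)=\ran E(\{\lambdab\})$, where $E$ is the joint spectral measure of $\nbold$; each $N_j$ restricts to $\lambda_j I$ on $\ran E(\{\lambdab\})$, so $N_jRh=\lambda_j Rh=RT_jh$ for all $h$, i.e.\ $RT_j=N_jR$. (One can also bypass $E$ via the pointwise estimate $|x_j-\lambda_j|\Le\frac{M}{\delta}g_0(x)+\varepsilon$ on $X$, valid for every $\varepsilon>0$ with $M=\max_X|x_j-\lambda_j|$ and $\delta=\min\{g_0(x):x\in X,\ |x_j-\lambda_j|\Ge\varepsilon\}>0$, which after squaring and applying the functional calculus yields $\|(N_j-\lambda_j I)Rh\|\Le\varepsilon\|h\|$.)

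I expect the only delicate points to be the spectral-theoretic identities $\nul g_0(\nbold)=\ran E(\{\lambdab\})$ and $N_j|_{\ran E(\{\lambdab\})}=\lambda_j I$, which are standard once one notes $\supp E\subseteq\sigma(\nbold)\subseteq X$ and $g_0^{-1}(\{0\})\cap X=\{\lambdab\}$. The rest is a formal reduction to Proposition~\ref{dtdw} and needs no new hyperrigidity input.
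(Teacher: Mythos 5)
Your argument is correct, and it is organized a bit differently from the paper's. The paper proves Proposition~\ref{dttr} by the same translation scheme used for Proposition~\ref{dtdw} (Proposition~\ref{ryled} together with the equivalence~\eqref{fdea}), but with the $C^*$-algebraic Proposition~\ref{jger} in place of Proposition~\ref{gfza}; the extra hypothesis on $g_0$ enters only inside the proof of Proposition~\ref{jger}, where the case $\pi=\chi(\cdot)I$ is handled by passing to the minimal $R$-dilation, showing $\rho(g_0)=0$ there, and concluding via the spectral measure $E_\rho$ that $\rho(a)R=\chi(a)R$. You instead stay at the level of $d$-tuples and reduce to Proposition~\ref{dtdw}: the alternative $\lambdab\notin\sigma_{\mathsf p}(\tbold)$ is covered by your hypothesis (ii), and the constant alternative $T_j=\lambda_j I$ is settled directly from $R^*g_0(\nbold)R=0$, positivity of $g_0(\nbold)$, and the identification $\nul(g_0(\nbold))=\ran E_{\nbold}(\{\lambdab\})$ (using $\supp E_{\nbold}\subseteq X$ and $g_0^{-1}(\{0\})\cap X=\{\lambdab\}$), which gives $\ran R\subseteq\ran E_{\nbold}(\{\lambdab\})$ and hence $N_jR=\lambda_jR=RT_j$. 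This is essentially the same $g_0$-positivity/spectral-measure idea as in the proof of Proposition~\ref{jger}, but your version is slightly leaner: since the desired conclusion in the constant case is only that $\ran R$ lies in the joint eigenspace at $\lambdab$ (equivalently $\ran R\subseteq\mcal_{\chi_{\lambdab}}(\rho)$, cf.\ Lemma~\ref{versyn}), you can dispense with the minimal-dilation reduction and with proving $g_0(\nbold)=0$ on the minimal space; your pointwise estimate $|x_j-\lambda_j|\Le\frac{M}{\delta}g_0(x)+\varepsilon$ even avoids spectral measures altogether. What the paper's route buys is uniformity — Proposition~\ref{jger} is a general statement for commutative unital $C^*$-algebras that is reused as such — while yours buys a self-contained, elementary argument at the level of normal $d$-tuples. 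Both implications you give ((i)$\Rightarrow$(ii) as a special case of Proposition~\ref{dtdw}(ii), and (ii)$\Rightarrow$(i) by verifying Proposition~\ref{dtdw}(ii) in its two alternatives) are sound.
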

Before stating the next result, we
introduce the following definition.
   \begin{df} 
Let $X$ be a nonempty compact subset of
$\cbb^d$ (with $d \in \natu$), $\hh$
and $\kk$ be Hilbert spaces, $\tbold =
(T_j)_{j=1}^d \in \ogr(\hh)^d$ and
$\nbold = (N_j)_{j=1}^d \in
\ogr(\kk)^d$ be normal $d$-tuples with
Taylor spectrum in $X$, $R\in
\ogr(\hh,\kk)$ be a contraction and
$\lambdab = (\lambda_1, \ldots,
\lambda_d) \in X$. Suppose that $\hh_0$
and $\hh_1$ are closed subspaces of
$\hh$, and $\kk_0$ and $\kk_1$ are
closed subspaces of $\kk$. We say that
$(\tbold, \nbold, R)$ has an isometric
(resp., a unitary)
$\lambdab$-decomposition $[$relative to
$(\hh_0, \hh_1, \kk_0, \kk_1$)$]$ if
   \begin{enumerate}
   \item[(i)] $\hh = \hh_0 \oplus \hh_1$ and $\kk = \kk_0 \oplus
\kk_1$,
   \item[(ii)] $\hh_0$ and $\hh_1$ reduce each $T_j$, and
$\kk_0$ and $\kk_1$ reduce each $N_j$ for $j=1,
\ldots, d$,
   \item[(iii)] $T_j|_{\hh_1} = \lambda_j I_{\hh_1}$ and
$N_j|_{\kk_1} = \lambda_j I_{\kk_1}$ for $j=1, \ldots,
d$,
   \item[(iv)] $R = V \oplus R_1$, where $V \in
\ogr(\hh_0,\kk_0)$ is isometric
$($resp., unitary$)$ and $R_1 \in
\ogr(\hh_1,\kk_1)$,
   \item[(v)] $T_j|_{\hh_0} = V^*N_j|_{\kk_0}
V$ for $j=1, \ldots, d$.
   \end{enumerate}
   \end{df}
The following theorem is the operator $d$-tuple
counterpart of the equivalences
(i)$\Leftrightarrow$(ii)$\Leftrightarrow$(v) in
Theorem~\ref{rconrep}.
   \begin{prop} \label{kridut}
Let $X$ be a compact subset of $\cbb^d$ $($with $d\in
\natu$$)$, and let $\lambdab \in X$. Suppose that $G$
is a finite or countably infinite set of generators of
$C(X)$ such that $f(\lambdab)=0$ for every $f\in G$.
Then the following conditions are equivalent{\em :}
   \begin{itemize}
   \item[(i)] $G$ is hyperrigid in $C(X)$,
   \item[(ii)] for all separable Hilbert spaces $\hh$ and $\kk$,
all normal $d$-tuples $\tbold\in
\ogr(\hh)^d$ and $\nbold\in
\ogr(\kk)^d$ with Taylor spectrum in
$X$, and every contraction $R\in
\ogr(\hh,\kk)$,
   \begin{align*}
f(\tbold)=R^*f(\nbold)R \;\; \forall f\in G \implies
\;\; f(\tbold)=R^* f(\nbold) R + f(\lambdab)
\triangle_{R} \;\; \forall f\in C(X),
   \end{align*}
   \item[(iii)] for all separable Hilbert spaces $\hh$ and $\kk$,
all normal $d$-tuples $\tbold \in
\ogr(\hh)^d$ and $\nbold \in
\ogr(\kk)^d$ with Taylor spectrum in
$X$, and every contraction $R\in
\ogr(\hh,\kk)$ such that $\kk =
\bigvee_{\substack{\boldsymbol{\alpha},
\boldsymbol{\beta} \in \zbb_+^d}}
\nbold^{*\boldsymbol{\alpha}}
\nbold^{\boldsymbol{\beta}} R(\hh)$,
   \begin{align*}
f(\tbold) = R^* f(\nbold) R \;\;
\forall f\in G \implies (\tbold,
\nbold, R) \;\; \text{has a unitary
$\lambdab$-decomposition}.
   \end{align*}
   \end{itemize}
   \end{prop}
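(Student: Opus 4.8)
The plan is to deduce this from Theorem~\ref{rconrep} by transporting everything through the Gelfand-type correspondence between normal $d$-tuples with Taylor spectrum in $X$ and representations of $C(X)$. Recall that for such a tuple $\tbold=(T_j)_{j=1}^d\in\ogr(\hh)^d$, the continuous functional calculus $f\mapsto f(\tbold)$ is a unital $*$-homomorphism $\pi_{\tbold}\colon C(X)\to\ogr(\hh)$, and conversely every representation $\pi\colon C(X)\to\ogr(\hh)$ arises this way from the tuple $T_j=\pi(\mathrm{pr}_j)$, where $\mathrm{pr}_j\in C(X)$ is the $j$-th coordinate projection; moreover $\pi(f)=f(\tbold)$ for all $f\in C(X)$. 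Under this correspondence, the point $\lambdab\in X$ determines the character $\chi:=\chi_{\lambdab}\colon C(X)\to\cbb$, $\chi(f)=f(\lambdab)$, and the hypothesis $f(\lambdab)=0$ for $f\in G$ becomes exactly $G\subseteq\ker\chi$. Thus the three conditions in Proposition~\ref{kridut} are, verbatim, the specializations of conditions (i), (ii) and (v) of Theorem~\ref{rconrep} to $\ascr=C(X)$ and $\chi=\chi_{\lambdab}$, once one checks that the rewriting of the hypotheses and conclusions is faithful.

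Concretely, I would proceed as follows. First, fix the dictionary above and verify the routine translations: $\pi(g)=R^*\rho(g)R$ for all $g\in G$ reads $f(\tbold)=R^*f(\nbold)R$ for all $f\in G$; the intertwining-type conclusion in Theorem~\ref{rconrep}(ii), namely $\pi(a)=R^*\rho(a)R+\chi(a)\triangle_R$ for all $a\in\ascr$, becomes $f(\tbold)=R^*f(\nbold)R+f(\lambdab)\triangle_R$ for all $f\in C(X)$; and the minimality condition $\kk=\bigvee_{a\in\ascr}\rho(a)R(\hh)$ becomes $\kk=\bigvee_{f\in C(X)}f(\nbold)R(\hh)$, which coincides with the stated condition $\kk=\bigvee_{\boldsymbol{\alpha},\boldsymbol{\beta}\in\zbb_+^d}\nbold^{*\boldsymbol{\alpha}}\nbold^{\boldsymbol{\beta}}R(\hh)$ because the $*$-polynomials in $N_1,\dots,N_d,N_1^*,\dots,N_d^*$ (equivalently, the monomials $\nbold^{*\boldsymbol{\alpha}}\nbold^{\boldsymbol{\beta}}$) are dense in norm in $\{f(\nbold):f\in C(X)\}$ by the Stone–Weierstrass theorem applied on the Taylor spectrum of $\nbold$ (which is contained in $X$), so the closed linear spans of $\{f(\nbold)Rh:f\in C(X),h\in\hh\}$ and of $\{\nbold^{*\boldsymbol{\alpha}}\nbold^{\boldsymbol{\beta}}Rh\}$ agree. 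Finally, the notion of a unitary $\lambdab$-decomposition of $(\tbold,\nbold,R)$ is, term by term, the notion of a unitary $\chi_{\lambdab}$-decomposition of $(\pi_{\tbold},\pi_{\nbold},R)$: conditions (i)–(v) of the $\lambdab$-decomposition definition translate into conditions (i)–(v) of Definition~\ref{dsfim}, using that $T_j|_{\hh_1}=\lambda_j I_{\hh_1}$ for all $j$ is equivalent to $f(\tbold)|_{\hh_1}=f(\lambdab)I_{\hh_1}$ for all $f\in C(X)$ (again by continuous functional calculus and density of polynomials), and similarly for $\nbold$ on $\kk_1$.

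With these identifications in place, the proposition is immediate: (i)$\Leftrightarrow$(ii)$\Leftrightarrow$(v) of Theorem~\ref{rconrep}, specialized to $\ascr=C(X)$, $\chi=\chi_{\lambdab}$, yields precisely the equivalences (i)$\Leftrightarrow$(ii)$\Leftrightarrow$(iii) of Proposition~\ref{kridut}. The only genuine points requiring care—hence what I regard as the main (if modest) obstacle—are the two density arguments: that the minimality span can be expressed via the monomials $\nbold^{*\boldsymbol{\alpha}}\nbold^{\boldsymbol{\beta}}$, and that the pointwise eigenvalue conditions $T_j|_{\hh_1}=\lambda_j I_{\hh_1}$ are equivalent to the functional-calculus condition $\pi_{\tbold}(f)|_{\hh_1}=f(\lambdab)I_{\hh_1}$ for all $f\in C(X)$; both follow from the fact that coordinate functions generate $C(X)$ as a $C^*$-algebra together with joint continuity and multiplicativity of the functional calculus on commuting normal tuples. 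Everything else is a transcription of Theorem~\ref{rconrep}.
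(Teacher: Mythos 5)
Your proposal is correct and follows essentially the same route as the paper: the paper likewise reduces the statement to the equivalences (i)$\Leftrightarrow$(ii)$\Leftrightarrow$(v) of Theorem~\ref{rconrep} with $\ascr=C(X)$ and $\chi(f)=f(\lambdab)$, using the correspondence between representations of $C(X)$ and normal $d$-tuples (Proposition~\ref{ryled}) together with the Stone--Weierstrass theorem to justify exactly the two density translations you single out. Nothing essential is missing from your argument.
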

   \begin{prop} \label{dwct}
Let $X$ be a nonempty compact subset of $\cbb^d$
$($$d\in \natu$$)$, let $\mu$ be a Borel probability
measure on $X$, and let $G$ be a finite or countably
infinite set of generators of $C(X)$ such that
$\int_{X} f \D \mu=0$ for every $f\in G$.
Then the following conditions are
equivalent{\em :}
   \begin{enumerate}
   \item[(i)] $G$ is hyperrigid,
   \item[(ii)] for all separable Hilbert spaces $\hh$ and $\kk$,
all normal $d$-tuples $\tbold =
(T_j)_{j=1}^d \in \ogr(\hh)^d$ and
$\nbold = (N_j)_{j=1}^d \in
\ogr(\kk)^d$ with Taylor spectrum in
$X$, and every contraction $R\in
\ogr(\hh,\kk)$,
   \begin{align} \label{nykixx}
f(\tbold) = R^*f(\nbold) R \;\; \forall f\in
G \implies RT_j=N_jR \text{ for } j=1,
\ldots, d.
   \end{align}
   \end{enumerate}
Moreover, if {\em (i)} holds and $\tbold, \nbold, R$
satisfy the if-clause of \eqref{nykixx} with
non-isometric contraction $R$, then $\mu$ is the Dirac
measure at uniquely determined point of $X$.
   \end{prop}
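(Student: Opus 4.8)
The plan is to reduce the proposition to Theorem~\ref{ogylu}, applied to the unital commutative $C^*$-algebra $\ascr = C(X)$ together with the state $\phi$ given by $\phi(f) = \int_X f\,\D\mu$, which by hypothesis annihilates $G$. The first step is to record the standard correspondence between representations of $C(X)$ and normal operator tuples. Writing $z_1,\dots,z_d$ for the coordinate functions on $X$, a representation $\pi\colon C(X)\to\ogr(\hh)$ gives the commuting normal $d$-tuple $\tbold=(\pi(z_1),\dots,\pi(z_d))$, whose Taylor spectrum is contained in $X$; conversely, every normal $d$-tuple with Taylor spectrum in $X$ determines, through the continuous functional calculus, a representation $\pi$ with $\pi(f)=f(\tbold)$ for all $f\in C(X)$, and this correspondence is bijective. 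Applying it to $\pi\leftrightarrow\tbold$ on $\hh$ and $\rho\leftrightarrow\nbold$ on $\kk$ turns the hypothesis $\pi(g)=R^*\rho(g)R$ for all $g\in G$ into $f(\tbold)=R^*f(\nbold)R$ for all $f\in G$, and conversely; separability is preserved since the Hilbert spaces are unchanged.

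The only point that is not purely formal is that the intertwining relation $R\pi(a)=\rho(a)R$ for all $a\in C(X)$ is equivalent to the finitely many relations $RT_j=N_jR$, $j=1,\dots,d$. One implication is immediate, taking $a=z_j$. For the other, $RT_j=N_jR$ together with the Fuglede--Putnam theorem (both $T_j$ and $N_j$ being normal) yields $RT_j^*=N_j^*R$; since the operators $T_1,\dots,T_d,T_1^*,\dots,T_d^*$ mutually commute and polynomials in $z,\bar z$ are uniformly dense in $C(X)$ by Stone--Weierstrass, the continuity of the functional calculus propagates the intertwining from this dense $*$-subalgebra to all of $C(X)$. With this equivalence in place, condition~(ii) of the proposition is, under the above dictionary, precisely condition~(ii) of Theorem~\ref{ogylu} for $\ascr=C(X)$ and the state $\phi$; hence the equivalence (i)$\Leftrightarrow$(ii) follows at once from the equivalence (i)$\Leftrightarrow$(ii) of that theorem.

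For the concluding assertion, assume (i) and let $\tbold,\nbold,R$ satisfy the if-clause of \eqref{nykixx} with $R$ non-isometric. Passing to the associated representations $\pi,\rho$, the triple $(\pi,\rho,R)$ meets the hypotheses of Theorem~\ref{soorh}: $G$ is hyperrigid, $R$ is a non-isometric contraction, and $\pi(g)=R^*\rho(g)R$ for all $g\in G$. Since the state $f\mapsto\int_X f\,\D\mu$ annihilates $G$, alternative~$(\alpha)$ of the dichotomy holds, and its concluding clause shows that this state is a character of $C(X)$. As every character of $C(X)$ is evaluation at a point $\lambdab\in X$, we get $\int_X f\,\D\mu=f(\lambdab)$ for all $f\in C(X)$, which by the Riesz representation theorem forces $\mu=\delta_{\lambdab}$, with $\lambdab$ uniquely determined as the single atom of $\mu$. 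The only step needing real care is the Fuglede--Putnam reduction of the full intertwining relation to its coordinate form, together with the bookkeeping that matches the two quantifier blocks; everything else is a direct appeal to Theorems~\ref{ogylu} and~\ref{soorh}.
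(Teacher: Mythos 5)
Your proposal is correct and follows essentially the same route as the paper: the authors prove Proposition~\ref{dwct} by combining Theorem~\ref{ogylu} with the representation/normal-tuple dictionary of Proposition~\ref{ryled} and the equivalence \eqref{fdea} (the Fuglede--Putnam plus Stone--Weierstrass reduction of $R\pi=\rho R$ to $RT_j=N_jR$), and the ``moreover'' part is obtained exactly as you do, via Theorem~\ref{soorh}($\alpha$) and the fact that characters of $C(X)$ are point evaluations.
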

Our next result is inspired by the characterizations
of operator convex functions given by Hansen and
Pedersen \cite{Ha-Pe82} (see also
\cite[Theorem~1.5]{P-S-roots23}).
   \begin{prop} \label{axer}
Let $X$ be a nonempty compact subset of $\cbb$, let
$\mu$ be a Borel probability measure on $X$, and let
$G$ be a finite or countably infinite set of
generators of $C(X)$ such that $\int_{X} f \D \mu=0$
for every $f\in G$. Then, for every integer $n\Ge 2$,
the following conditions are equivalent{\em :}
   \begin{enumerate}
   \item[(i)] $G$ is hyperrigid,
   \item[(ii)] for all separable Hilbert spaces $\hh$ and $\kk$,
all normal operators $T\in \ogr(\hh)$ and $N\in
\ogr(\kk)$ with spectra in $X$, and every contraction
$R\in \ogr(\hh,\kk)$,
   \begin{align} \label{nykixe}
f(T) = R^*f(N)R \;\; \forall f\in G \implies RT=NR,
   \end{align}
   \item[(iii)]
for all separable Hilbert spaces $\hh$, $\kk_1,
\ldots, \kk_n$, all normal operators $T\in \ogr(\hh)$,
$N_1\in \ogr(\kk_1), \ldots, N_n\in \ogr(\kk_n)$ with
spectra in $X$, and all operators $R_1 \in \ogr(\hh,
\kk_1), \dots, R_n \in \ogr(\hh, \kk_n)$ such that
$\sum_{i=1}^n R_i^*R_i \Le I$,
   \begin{align} \label{ftaini}
f(T) = \sum_{i=1}^{n} R_i^*f(N_i)R_i \;\; \forall f\in
G \implies R_iT=N_iR_i \;\; \forall i\in \{1, \ldots,
n\}.
   \end{align}
   \end{enumerate}
Moreover, if $G$ is hyperrigid and $T,N,R$ $($resp.,
$T, N_1, \ldots, N_n$, $R_1, \ldots, R_n$\/$)$ satisfy
the if-clause of \eqref{nykixe} $($resp.,
\eqref{ftaini}$)$ with $R^*R \lneqq I$ $($resp.,
$\sum_{i=1}^n R_i^*R_i \lneqq I$\/$)$, then $\mu$ is
the Dirac measure at a uniquely determined point of
$X$.
   \end{prop}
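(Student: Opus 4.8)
The plan is to prove the chain (i)$\Rightarrow$(iii)$\Rightarrow$(ii)$\Rightarrow$(i), plus the ``moreover'' clause. The implication (ii)$\Rightarrow$(i) is essentially a special case of Proposition~\ref{dwct} (restricted to $d=1$), so I would simply invoke that result: condition \eqref{nykixe} is exactly \eqref{nykixx} for $d=1$, and since a single normal operator $N$ with spectrum in $X$ has Taylor spectrum equal to its ordinary spectrum, (ii) here implies (ii) of Proposition~\ref{dwct}, hence $G$ is hyperrigid. The implication (iii)$\Rightarrow$(ii) is trivial by taking $n=2$ (or any $n\ge2$) with $R_1=R$, $N_1=N$, $R_2=0$ and $N_2$ arbitrary normal with spectrum in $X$; then $\sum R_i^*R_i = R^*R\le I$, the hypothesis of \eqref{ftaini} reduces to $f(T)=R^*f(N)R$ for $f\in G$, and the conclusion $R_1T=N_1R_1$ is $RT=NR$ (the conclusion $R_2T=N_2R_2$ is $0=0$).

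The substantive implication is (i)$\Rightarrow$(iii). Here I would assemble the $R_i$ and $N_i$ into a single contraction and a single normal operator and reduce to the dilation-theoretic characterization of hyperrigidity. Concretely, set $\kk = \kk_1\oplus\cdots\oplus\kk_n$, let $N = N_1\oplus\cdots\oplus N_n\in\ogr(\kk)$ — a normal operator with spectrum contained in $X$ — and let $\rho\colon C(X)\to\ogr(\kk)$ be $f\mapsto f(N)=f(N_1)\oplus\cdots\oplus f(N_n)$. Define $R\in\ogr(\hh,\kk)$ by $Rh = (R_1h,\ldots,R_nh)$; then $R^*(k_1,\ldots,k_n)=\sum_i R_i^*k_i$, so $R^*R=\sum_i R_i^*R_i\le I$, i.e.\ $R$ is a contraction, and $R^*f(N)R = \sum_i R_i^*f(N_i)R_i$ for all $f\in C(X)$. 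Also let $\pi\colon C(X)\to\ogr(\hh)$ be $f\mapsto f(T)$. The hypothesis of \eqref{ftaini} says exactly $\pi(g)=R^*\rho(g)R$ for all $g\in G$. Now, because $\int_X f\,\D\mu=0$ for $f\in G$, the state $\phi(f)=\int_X f\,\D\mu$ on $C(X)$ annihilates $G$; hence the hypotheses of Theorem~\ref{ogylu} are met, and since $G$ is hyperrigid, condition (ii) of that theorem gives $R\pi(a)=\rho(a)R$ for all $a\in C(X)$. Taking $a=\mathrm{id}_X$ (the coordinate function, which lies in $C(X)$ since $X\subseteq\cbb$) yields $RT=NR$; reading this componentwise gives $R_iT=N_iR_i$ for every $i$, which is the conclusion of \eqref{ftaini}. (Alternatively one can cite Theorem~\ref{soorh}($\alpha$), or directly the equivalence (i)$\Leftrightarrow$(ii) of Proposition~\ref{dwct} after noting $\rho(a)R=R\pi(a)$ is what is needed; either route works.)

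For the ``moreover'' clause: suppose $G$ is hyperrigid and $T,N_1,\ldots,N_n,R_1,\ldots,R_n$ satisfy the if-clause of \eqref{ftaini} with $\sum_i R_i^*R_i\lneqq I$. With the above identifications this means $R$ is a non-isometric contraction with $\pi(g)=R^*\rho(g)R$ for $g\in G$. Then Theorem~\ref{soorh} applies: since there is a state (namely $\phi(f)=\int_X f\,\D\mu$) vanishing on $G$, alternative $(\alpha)$ must hold, and its ``moreover'' part asserts that $\phi$ is a character of $C(X)$. But the characters of $C(X)$ are exactly the point evaluations $f\mapsto f(\lambdab)$, $\lambdab\in X$; equating $\int_X f\,\D\mu = f(\lambdab)$ for all $f\in C(X)$ forces $\mu=\delta_{\lambdab}$, the Dirac measure at $\lambdab$. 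Uniqueness of $\lambdab$ is automatic since distinct points give distinct point evaluations. The same argument works verbatim for the triple $(T,N,R)$ in \eqref{nykixe} (take $n=1$). The main obstacle is purely bookkeeping: one must check that the direct-sum operator $N$ is genuinely normal with $\sigma(N)\subseteq X$ and that $f(N)=\bigoplus_i f(N_i)$ under the continuous functional calculus, and that $R^*R=\sum_i R_i^*R_i$ — all standard — after which the hard analytic content is entirely supplied by Theorems~\ref{soorh} and~\ref{ogylu} (equivalently Proposition~\ref{dwct}).
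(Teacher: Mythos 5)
Your proposal is correct and follows essentially the same route as the paper: the equivalence with hyperrigidity comes from Theorem~\ref{ogylu} (equivalently Proposition~\ref{dwct} with $d=1$) applied to the state $\phi(f)=\int_X f\,\D\mu$, the passage between (ii) and (iii) uses exactly the paper's direct-sum assembly $R h=R_1h\oplus\cdots\oplus R_nh$, $N=N_1\oplus\cdots\oplus N_n$ in one direction and zero-padding in the other, and the ``moreover'' clause is deduced, as in the paper, from Theorem~\ref{soorh}($\alpha$) together with $\mathfrak{M}_{C(X)}=X$. The only difference is the ordering of the implication chain, which is immaterial.
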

The proofs of Propositions~\ref{dtdw}, \ref{dttr},
\ref{kridut}, \ref{dwct} and~\ref{axer} appear in
Section~\ref{Sec.7}.
   \section{
Prerequisites}
   The field of complex numbers is denoted by $\cbb$.
By an ideal in an algebra we always mean a two-sided
ideal. It is well-known that every closed ideal in a
$C^*$-algebra is selfadjoint (see
\cite[Theorem~3.1.3]{Mur90}). Representations of
unital $C^*$-algebras are assumed to preserve
involutions and units. A completely positive (linear)
map between unital $C^*$-algebras that preserves units
is called a {\em unital completely positive $($UCP$)$}
map. We will abbreviate ``completely contractive
completely positive map'' between unital
$C^*$-algebras to ``CCCP map''.

Let $\hh$ and $\kk$ be (complex)
Hilbert spaces. Denote by $\ogr(\hh,
\kk)$ the Banach space of all bounded
linear operators from $\hh$ to $\kk$.
If $A\in \ogr(\hh,\kk)$, then $A^*$,
$\nul(A)$ and $\ran(A)$ stand for the
adjoint, the kernel and the range of
$A$, respectively. We say that $A\in
\ogr(\hh)$ is \textit{normal} if
$A^*A=AA^*$, \textit{selfadjoint} if
$A=A^*$ and \textit{positive} if
$\langle Ah,h\rangle \Ge 0$ for all
$h\in \hh$. Each positive operator
$A\in \ogr(\hh)$ has a unique positive
square root denoted by $A^{1/2}$. If
$A\in \ogr(\hh)$, we write
$|A|:=(A^*A)^{1/2}$.

Given a $d$-tuple $\tbold=(T_j)_{j=1}^d
\in \ogr(\hh)^d$ of commuting operators
(with $d\in \natu$), we write
$\tbold^*=(T_j^*)_{j=1}^d$ and
$\tbold^{\boldsymbol{\alpha}} =
\prod_{j=1}^d T_j^{\alpha_j}$ for
$\boldsymbol{\alpha} =
(\alpha_j)_{j=1}^d \in \zbb_+^d$. The
Taylor spectrum of $\tbold$ is denoted
by $\sigma(\tbold)$ (see \cite{Tay70}).
We say that $\lambdab =
(\lambda_j)_{j=1}^d \in \cbb^d$ is a
{\em joint eigenvalue} of $\tbold$ if
$\bigcap_{j=1}^d \nul(\lambda_j I-T_j)
\neq \{0\}$. The space $\bigcap_{j=1}^d
\nul(\lambda_j I-T_j)$ is called the
{\em joint eigenspace} corresponding to
the joint eigenvalue $\lambdab$. The
set of all joint eigenvalues of
$\tbold$ is denoted by
$\sigma_{\mathsf{p}}(\tbold)$.

A $d$-tuple $\tbold=(T_j)_{j=1}^d \in
\ogr(\hh)^d$ is called {\em normal} if
each component $T_j$ is normal and the
components commute with each other.
Recall that for any normal $d$-tuple
$\tbold=(T_j)_{j=1}^d \in \ogr(\hh)^d$,
there exists a unique spectral
measure\footnote{\;$\borel(X)$ denotes
the $\sigma$-algebra of all Borel
subsets of a topological Hausdorff
space $X$.} $E_{\tbold} \colon
\borel(\cbb^d) \to \ogr(\hh)$, called
the {\em joint spectral measure} of
$\tbold$, such that (see
\cite[Theorem~5.21]{Sch12})
   \begin{align} \label{tyjyt}
T_j = \int_{\cbb^d} \xi_j \D E_{\tbold},
\quad j=1, \ldots, d,
   \end{align}
where $\xi_j$ is a complex function on
$\cbb^d$ defined by
   \begin{align} \label{corge}
\text{$\xi_j(\zbold)=z_j$ for $\zbold=(z_1,
\ldots, z_d)\in \cbb^d$ and $j=1, \ldots,
d$.}
   \end{align}
Moreover, the following identity holds
(cf.~\cite[Theorem~2.1]{C-J-J-S}):
   \begin{align} \label{dzer}
\sigma(\tbold)=\supp(E_{\tbold}),
   \end{align}
where $\supp(E_{\tbold})$ denotes the
closed support of $E_{\tbold}$. The
Stone-von Neumann calculus for a normal
$d$-tuple $\tbold=(T_j)_{j=1}^d \in
\ogr(\hh)^d$ is defined by
   \begin{align*}
f(\tbold)=\int_{\sigma(\tbold)}f\D
E_{\tbold}, \quad f\in C(X),
   \end{align*}
whenever $X$ is a compact subset
$\cbb^d$ such that $\sigma(\tbold)
\subseteq X$. Note that $f(\tbold)$
depends only on the restriction of $f$
to $\sigma(\tbold)$. The relationship
between representations of $C(X)$,
where $X\subseteq \cbb^d$ is compact,
and normal $d$-tuples is established in
Appendix~\ref{App.B}, in which the
notion of ``the representation induced
by a normal $d$-tuple'' is also
defined. For further background on the
spectral theorem and the Stone-von
Neumann calculus needed in this
article, we refer the reader to
\cite{Rud73,Weid80,Sch12}.
   \section{\label{Sec.5}Proofs of
Theorems~\ref{soorh} and~\ref{soorh-2}, and dichotomy
($\beta$) revisited.}
   We begin by describing completely positive maps
through UCP maps.
   \begin{lemma} 
   Let $\ascr$ be a $C^*$-algebra with
unit $e$, $\hh$ be a Hilbert space and
$\varPsi\colon \ascr \to \ogr(\hh)$ be
a map. Then the following conditions
are equivalent{\em :}
   \begin{enumerate}
   \item[(i)] $\varPsi$ is completely
positive,
   \item[(ii)] there exist a UCP map $\varPhi\colon \ascr \to
\ogr(\hh)$ and a completely positive map
$\widetilde\varPsi\colon \ogr(\hh) \to \ogr(\hh)$ such
that
   \begin{align} \label{vvpy}
\varPsi = \widetilde \varPsi \circ \varPhi,
   \end{align}
   \item[(iii)] there exist a UCP map $\varPhi\colon \ascr \to
\ogr(\hh)$ and $R\in \ogr(\hh)$ such that $R\Ge 0$~and
   \begin{align*}
\varPsi(a) = R \varPhi(a) R, \quad a\in \ascr.
   \end{align*}
   \end{enumerate}
Moreover, if {\em (iii)} holds, then
$R=\varPsi(e)^{1/2}$.
   \end{lemma}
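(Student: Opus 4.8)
The statement to prove is a lemma characterizing completely positive maps $\varPsi\colon\ascr\to\ogr(\hh)$ in three equivalent ways, with the factorization $\varPsi=R\varPhi(\cdot)R$ through a UCP map $\varPhi$ and a positive $R$, and the identification $R=\varPsi(e)^{1/2}$. The natural route is the cycle $(i)\Rightarrow(iii)\Rightarrow(ii)\Rightarrow(i)$, since $(iii)\Rightarrow(ii)$ is essentially trivial (take $\widetilde\varPsi(X)=RXR$, which is completely positive as a composition/sandwich) and $(ii)\Rightarrow(i)$ is immediate because a composition of completely positive maps is completely positive. So the entire content lies in $(i)\Rightarrow(iii)$.

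For $(i)\Rightarrow(iii)$ I would first dispose of the degenerate case $\varPsi(e)=0$: then $\varPsi=0$ by the Kadison–Schwarz-type inequality $\varPsi(a)^*\varPsi(a)\Le\|\varPsi(e)\|\,\varPsi(a^*a)$ (or by $0\Le\varPsi(a^*a)\Le\|a\|^2\varPsi(e)$ for selfadjoint $a^*a$ and polarization), so take $\varPhi$ to be any state composed with $I_\hh$ (a UCP map, e.g.\ $\varPhi(a)=\chi(a)I$ for some character, if one exists — but to avoid assuming characters exist, take $\varPhi(a)=\psi(a)I_\hh$ for a fixed state $\psi$, which is visibly UCP) and $R=0=\varPsi(e)^{1/2}$. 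Assuming $\varPsi(e)\neq 0$, set $R:=\varPsi(e)^{1/2}\Ge0$. The crucial point is that $\ran(R)=\ran(\varPsi(e)^{1/2})$ is large enough: by the Stinespring theorem applied to $\varPsi$, write $\varPsi(a)=W^*\sigma(a)W$ for a representation $\sigma\colon\ascr\to\ogr(\kk)$ and $W\in\ogr(\hh,\kk)$ with $W^*W=\varPsi(e)=R^2$. Then $\|Wh\|=\|Rh\|$ for all $h$, so there is a well-defined isometry $U\colon\overline{\ran R}\to\overline{\ran W}$ with $UR=W$; extend $U$ to a contraction (still call it $U$) on all of $\hh$, define the UCP-candidate by $\varPhi(a):=U^*\sigma(a)U$, and check $R\varPhi(a)R=R U^*\sigma(a)UR=(UR)^*\sigma(a)(UR)=W^*\sigma(a)W=\varPsi(a)$.

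The one genuine obstacle is arranging that $\varPhi$ is genuinely \emph{unital}, i.e.\ $\varPhi(e)=I_\hh$, not merely $\Le I_\hh$; the naive $U^*\sigma(e)U=U^*U$ need not equal $I$ on $(\ran R)^\perp$. The standard fix is to enlarge the dilation space: replace $\kk$ by $\kk\oplus\hh$ and $\sigma$ by $\sigma\oplus\tau$, where $\tau(a)=\psi(a)I_\hh$ for a fixed state $\psi$ on $\ascr$ (so $\tau$ is a genuine unital $*$-representation-like UCP map — or, more cleanly, use an actual representation $\tau$ of $\ascr$ on $\hh$, available after passing to a possibly larger Hilbert space, so that $\sigma\oplus\tau$ is a representation), and extend $U$ to $\widehat U=U\oplus(\text{something})\colon\hh\to\kk\oplus\hh$ chosen so that $\widehat U^*\widehat U=I_\hh$ while still $\widehat UR=W\oplus0$ has the right form; concretely $\widehat U h=(Uh)\oplus(\triangle h)$ where $\triangle:=\sqrt{I-U^*U}$, using that $\triangle R=0$ because $\|Uh\|=\|h\|$ for $h\in\overline{\ran R}$. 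Then $\varPhi(a):=\widehat U^*(\sigma\oplus\tau)(a)\widehat U$ is UCP, $\varPhi(e)=\widehat U^*\widehat U=I$, and $R\varPhi(a)R=\varPsi(a)$ as before. Finally, the ``moreover'' clause is read off directly: $(iii)$ forces $\varPsi(e)=R\varPhi(e)R=R^2$, hence $R=\varPsi(e)^{1/2}$ by uniqueness of the positive square root. I would present $(i)\Rightarrow(iii)$ in full and the remaining two implications in a sentence each.
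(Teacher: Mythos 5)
Your proposal is correct, and the two easy implications and the ``moreover'' clause ($\varPsi(e)=R\varPhi(e)R=R^2$, hence $R=\varPsi(e)^{1/2}$ by uniqueness of the positive square root) match the paper. The interesting step (i)$\Rightarrow$(iii) is handled by a genuinely different device. The paper also starts from a Stinespring dilation $\varPsi(a)=B^*\pi(a)B$ and the partial isometry identity $B=\widetilde U\varPsi(e)^{1/2}$, but it then inflates the Stinespring representation (replacing $\kk$ by $\kk\oplus\bigoplus_{\omega}\kk_\omega$ and $\pi$ by a corresponding direct sum) precisely to force the cardinality inequality $\dim\overline{\ran(\varPsi(e))}^{\perp}\Le\dim\overline{\ran(B)}^{\perp}$, which allows $\widetilde U$ to be extended to an \emph{isometry} $U\in\ogr(\hh,\kk)$; the resulting $\varPhi=U^*\pi(\cdot)U$ is then UCP as a compression of a representation along a single isometry. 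You instead avoid all dimension counting: you extend the partial isometry by its defect, setting $\widehat U h=(Uh)\oplus\bigl(\sqrt{I-U^*U}\,h\bigr)$ into $\kk\oplus\hh$, and put the UCP block $\tau(a)=\psi(a)I_{\hh}$ (a state tensored with the identity) on the appended copy of $\hh$; since $\sqrt{I-U^*U}\,R=0$, the extra block is killed when sandwiching by $R$, and $\varPhi=\widehat U^*(\sigma\oplus\tau)(\cdot)\widehat U$ is unital because $\widehat U$ is an isometry. This works because condition (iii) only requires $\varPhi$ to be UCP, not a compression of a representation, so your route is more elementary (no cardinal arithmetic), while the paper's route yields the slightly stronger structural fact that $\varPhi$ can be taken of the form $U^*\pi(\cdot)U$ with $\pi$ a representation and $U$ an isometry. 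One small remark: your parenthetical suggestion to replace $\tau$ by an actual representation ``on a possibly larger Hilbert space'' is unnecessary and slightly off target, since $\varPhi$ must take values in $\ogr(\hh)$ for the original $\hh$; but your main construction does not use it, so nothing is affected.
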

   \begin{proof}
(i)$\Rightarrow$(iii) By the Stinespring dilation
theorem (see \cite[Theorem~1]{Sti55}), there exist a
Hilbert space $\kk$, an operator $B\in \ogr(\hh,\kk)$
and a representation $\pi\colon \ascr \to \ogr(\kk)$
such that
   \allowdisplaybreaks
   \begin{align} \label{vsi}
\varPsi(a) = B^*\pi(a)B, \quad a\in \ascr.
   \end{align}
Since $\varPsi$ is positive, $\varPsi(e)\Ge 0$. We may
assume without loss of generality that
   \allowdisplaybreaks
   \begin{align} \label{dxin}
\dim \overline{\ran(\varPsi(e))}^{\perp} \Le \dim
\overline{\ran(B)}^{\perp}.
   \end{align}
Indeed, there exists a cardinal number $\frak n\Ge 1$
such that $\frak n \cdot \dim \kk \Ge \dim \hh$. Set
$\mcal = \bigoplus_{\omega \in \varSigma}
\kk_{\omega}$ and $\kk_{\mcal}=\kk \oplus \mcal$,
where $\varSigma$ is a set of cardinality $\mathfrak
n$ and $\kk_{\omega}=\kk$ for every $\omega\in
\varSigma$. Clearly, $\dim \mcal = \frak n \cdot \dim
\kk$. Define the operator $B_{\mcal}\in
\ogr(\hh,\kk_{\mcal})$ and the representation
$\pi_{\mcal}\colon \ascr \to \ogr(\kk_{\mcal})$ by
$B_{\mcal} h=Bh \oplus 0$ for $h\in \hh$ and
$\pi_{\mcal} =\pi\oplus \bigoplus_{\omega \in
\varSigma} \pi_{\omega}$ with $\pi_{\omega}=\pi$. Then
   \begin{align*}
\dim \overline{\ran(\varPsi(e))}^{\perp} \Le \dim
\big(\overline{\ran(B)}^{\perp} \oplus \mcal\big) =
\dim \big(\overline{\ran(B)}\oplus \{0\}\big)^{\perp}
= \dim \overline{\ran(B_{\mcal})}^{\perp}.
   \end{align*}
Clearly \eqref{vsi} is valid with $\pi_{\mcal}$ and
$B_{\mcal}$ in place of $\pi$ and $B$, respectively.

Assume that \eqref{dxin} holds. Substituting $a=e$
into \eqref{vsi}, we get
   \begin{align} \label{bhyr}
\|Bh\|^2=\|\varPsi(e)^{1/2}h\|^2, \quad h\in \hh.
   \end{align}
Since $\overline{\ran(\varPsi(e)^{1/2})} =
\overline{\ran(\varPsi(e))}$, we deduce from
\eqref{bhyr} that there exists a unique unitary
operator $\widetilde U\in
\ogr(\overline{\ran(\varPsi(e))}, \overline{\ran(B)})$
such that $Bh=\widetilde{U} \varPsi(e)^{1/2}h$ for
every $h \in \hh$. Hence, by \eqref{dxin} there exist
an isometry $U\in \ogr(\hh,\kk)$ such that
   \begin{align} \label{psypas}
B=U\varPsi(e)^{1/2}.
   \end{align}
Define the map $\varPhi\colon \ascr \to \ogr(\hh)$ by
$\varPhi(a)=U^*\pi(a)U$ for $a\in \ascr$. The map
$\varPhi$, being a composition of the representation
$\pi$ and the UCP map $\ogr(\kk)\ni Y \mapsto U^*YU\in
\ogr(\hh)$, is a UCP map. By \eqref{vsi} and
\eqref{psypas} we have
   \begin{align*}
\varPsi(a) = B^*\pi(a)B = \varPsi(e)^{1/2}
\varPhi(a)\varPsi(e)^{1/2}, \quad a \in \ascr.
   \end{align*}
This means that (iii) holds with $R=\varPsi(e)^{1/2}$.

(iii)$\Rightarrow$(ii) Let $\varPhi$ and $R$ be as in
(iii). Then the map $\widetilde \varPsi$ defined by
$\widetilde \varPsi(Y) = RYR$ for $Y\in \ogr(\hh)$
satisfies \eqref{vvpy}.

(ii)$\Rightarrow$(i) This implication is obvious.
   \end{proof}
   \begin{proof}[Proof of Theorem~\ref{soorh}]
(i) Suppose that $G \subseteq \ker \phi$ for some
state $\phi$ on $\ascr$. Define the map $\varPsi_0
\colon \ascr \to \ogr(\hh)$ by $\varPsi_0(a)= \phi(a)
\triangle_{R}$ for $a \in \ascr$. Since states on
$C^*$-algebras are UCP maps (see
\cite[Proposition~3.8]{Paul02}), we infer from
\cite[Lemma~3.10]{Paul02} that the map $\varPsi_0$ is
completely positive. Let us define the maps
$\varPsi_1, \varPhi \colon \ascr \to \ogr(\hh)$ by
$\varPsi_1(a)= R^*\rho(a)R$ and $\varPhi(a) =
\varPsi_0(a) + \varPsi_1(a)$ for $a \in \ascr$.
Clearly, the map $\varPsi_1$ is completely positive
(see \cite[Theorem~1]{Sti55}). Hence, since
$\varPsi_0(e)=\triangle_{R}$, it follows that
$\varPhi$ is a UCP map ($e$ is the unit of $\ascr$).
As $G \subseteq \ker\phi$, we have $\pi(g) =
\varPhi(g)$ for all $g\in G$. Thus, by the unique
extension property (see \cite[Theorem~2.1]{Arv11}),
$\pi=\varPhi$,~i.e.,
   \begin{align} \label{pipu}
\pi(a)= \varPsi_0(a) + \varPsi_1(a), \quad a \in
\ascr.
   \end{align}

Now we prove that $R$ intertwines $\pi$ and $\rho$.
Take finite sequences $\{a_j\}_{j=1}^n \subseteq
\ascr$ and $\{h_j\}_{j=1}^n \subseteq \hh$. Since
$\varPsi_0$ is completely positive and the matrix
$[a_k^{*}a_j]_{k,j=1}^n$ is positive, we deduce that
$\sum_{j,k=1}^n \is{\varPsi_0(a_k^{*}a_j)h_j}{h_k} \Ge
0$. This together with \eqref{pipu}~yields
   \allowdisplaybreaks
   \begin{align*}
\bigg\|\sum_{j=1}^n \pi(a_j)h_j\bigg\|^2 & =
\sum_{j,k=1}^n \is{\pi(a_k^{*}a_j)h_j}{h_k}
   \\ \notag
& = \sum_{j,k=1}^n \is{\rho(a_k^{*}a_j)R h_j}{Rh_k} +
\sum_{j,k=1}^n \is{\varPsi_0(a_k^{*}a_j)h_j}{h_k}
   \\
& \Ge \sum_{j,k=1}^n \is{\rho(a_k^{*}a_j)R h_j}{Rh_k}
   \\
& = \bigg\|\sum_{j=1}^n \rho(a_j)Rh_j\bigg\|^2.
   \end{align*}
Since $\pi(e)=I_{\hh}$, the vectors
$\big\{\pi(a)h\colon a \in \ascr, \, h\in \hh\big\}$
span $\hh$. As a consequence, there exists a unique
contraction $\widehat R\in \ogr(\hh,\kk)$ such that
$\widehat R\pi(a)=\rho(a)R$ for every $a\in \ascr$.
Substituting $a=e$, we see that $\widehat R=R$.
Therefore, we have
   \begin{align} \label{prypliv}
R \pi(a)=\rho(a) R, \quad a\in \ascr.
   \end{align}

Our next goal is to show that $J$ is a
closed ideal in $\ascr$. Clearly, $J$
is a closed selfadjoint vector subspace
of $\ascr$. It follows from
\eqref{prypliv} that
$R^*\rho(a)=\pi(a)R^*$ for every $a\in
\ascr$, which implies that for all
$a\in J$ and $b,c\in \ascr$,
   \begin{align} \notag
\pi(bac)=\pi(b)\pi(a) \pi(c) & = \pi(b) R^*\rho(a)R
\pi(c)
   \\ \label{ifyl}
& = R^*\rho(b)\rho(a)\rho(c) R = R^*\rho(bac)R.
   \end{align}
This shows that $J$ is a closed proper ideal in
$\ascr$ (since $R$ is non-isometric). By \eqref{pipu}
we have $\ker \phi \subseteq J$. As $\ker \phi$ has
codimension $1$ in $\ascr$, it follows that $\ker \phi
= J$, and consequently $\phi$ is a character of
$\ascr$. The identity \eqref{xzaw} follows from
\eqref{pipu}, which completes the proof of (i) and of
the ``moreover'' part.

(ii) ({\sc The first proof}) Assume that for every
state $\phi$ on $\ascr$, $G\nsubseteq \ker \phi$. We
proceed by contradiction. If $R\pi(a) = \rho(a) R$ for
all $a \in \ascr$, then arguing as in \eqref{ifyl}, we
deduce that $J$ is an ideal. By assumption, the set
$G$ is finite or countably infinite, so by rescaling
there is no loss of generality in assuming that the
series $\sum_{g\in G} \|g\|^2$ is convergent. Clearly,
$a := \sum_{g\in G} g^*g$ is selfadjoint and $a\in J$.
Denote by $\mathscr{S}(\ascr)$ the compact space of
all states of $\ascr$, equipped with the weak$^*$
topology. A straightforward argument shows that the
map $\mathscr{S}(\ascr) \ni \phi \to \sum_{g\in G}
|\phi(g)|^2 \in \rbb_+$ is continuous, and by
assumption it takes only positive values. So by the
Weierstrass theorem there exists $\varepsilon > 0$
such that $\sum_{g\in G} |\phi(g)|^2 \Ge \varepsilon$
for every $\phi \in \mathscr{S}(\ascr)$. According to
the Cauchy-Schwarz inequality for states, we obtain
   \begin{align*}
\phi(a) = \sum_{g\in G} \phi(g^*g) \Ge \sum_{g\in G}
|\phi(g)|^2 \Ge \phi(\varepsilon e), \quad \phi\in
\mathscr{S}(\ascr).
   \end{align*}
It follows from \cite[Theorem~3.4.3]{Mur90} that $a -
\varepsilon e \Ge 0$, hence $a$ is invertible in
$\ascr$. Since $a \in J$, we deduce that $e\in J$, or
equivalently, that $R$ is an isometry, a
contradiction.

(ii) ({\sc The second proof}) 
As above, we proceed by contradiction, arriving at the
statement that $J$ is a proper closed ideal in
$\ascr$. Denote by $\ascr_G$ the $*$-algebra (not
necessarily unital) generated by $G$. By assumption
$G\subseteq J$, so $\ascr_{G} \subseteq J$. We claim
that $\ascr = J \dotplus \cbb e$. Take $a\in \ascr$.
Since $G$ generates the unital $C^*$-algebra $\ascr$,
there exist sequences $\{b_n\}_{n=1}^{\infty}
\subseteq \ascr_{G}\subseteq J$ and
$\{\lambda_n\}_{n=1}^{\infty}\subseteq \cbb$ such that
$a_n:=b_n + \lambda_n e \to a$ as $n\to\infty$ ($e$
stands for the unit of $\ascr$). First, consider the
case when the sequence $\{\lambda_n\}_{n=1}^{\infty}$
is unbounded. Passing to a subsequence, if necessary,
we can assume without loss of generality that
$\lim_{n\to\infty}|\lambda_n|=\infty$. Then
$\lim_{n\to\infty} \frac{1}{\lambda_n} a_n =0$, which
yields $e=-\lim_{n\to\infty} \frac{1}{\lambda_n} b_n
\in J$, so the ideal $J$ is not proper, a
contradiction. Therefore, the sequence
$\{\lambda_n\}_{n=1}^{\infty}$ is bounded. In view of
Bolzano-Weierstrass theorem, passing to a subsequence
if necessary, we can assume without loss of generality
that $\lim_{n\to\infty} \lambda_n =\lambda$ for some
$\lambda \in \cbb$. Then the sequence
$\{b_n\}_{n=1}^{\infty}$ is convergent and
$a=b+\lambda e$, where $b=\lim_{n\to\infty} b_n \in
J$. This shows that $\ascr = J \dotplus \cbb e$.
Hence, there exists a unique character $\chi$ of
$\ascr$ such that $G \subseteq J = \ker\chi$, a
contradiction. This completes the proof.
   \end{proof}
   \begin{proof}[Proof of Theorem~\ref{soorh-2}]
It follows from Theorem~\ref{soorh}($\alpha$) that
   \begin{align} \label{rprs}
\text{$R \pi(a) = \rho(a) R$ and $R^*\rho(a)= \pi(a)
R^*$ for all $a\in \ascr$.}
   \end{align}

(i) By \eqref{rprs}, we have
   \begin{align} \label{cykot}
R^*R \pi(a) = R^* \rho(a) R = \pi(a) R^*R, \quad a \in
\ascr,
   \end{align}
so $\triangle_{R}$ commutes with $\pi$. Similarly,
$\triangle_{R^*}$ commutes with $\rho$ because
   \begin{align} \label{rrsa}
\rho(a) RR^* = R \pi(a) R^* = RR^* \rho(a), \quad a
\in \ascr,
   \end{align}
which completes the proof of (i).

(ii)\&(iii) In view of (i) and \eqref{rprs},
$\overline{\ran(\triangle_{R})}$ reduces $\pi$ and
$\overline{\ran(\triangle_{R^*}R)}$ reduces $\rho$
(note that $\overline{\ran(\triangle_{R})}\neq \{0\}$
since $R$ is non-isometric). Since $R\triangle_{R} =
\triangle_{R^*} R$, we get $R(\ran(\triangle_{R}))
\subseteq \ran(\triangle_{R^*} R)$, so $R(\hh_1)
\subseteq \kk_1$. In turn, if $h\in
\hh_0=\ker\triangle_{R}$, then for every $g\in \hh$,
   \begin{align*}
\is{Rh}{\triangle_{R^*} Rg} = \is{Rh}{ R\triangle_{R}
g} = \is{R^*Rh}{\triangle_{R} g} =
\is{h}{\triangle_{R} g} = \is{\triangle_{R} h}{g} = 0,
   \end{align*}
which implies that $Rh \in \kk_1^{\perp} = \kk_0$.
Hence $R(\hh_0) \subseteq \kk_0$. This yields $R=R_1
\oplus R_0$, where $R_j\in \ogr(\hh_j, \kk_j)$ for
$j=0,1$ are given by $R_jh=Rh$ for $h\in \hh_j$ and
$j=0,1$. That $R_0$ is isometric follows from the
equality $\hh_0=\ker\triangle_{R}$. If $h\in \hh_1$
and $\|R_1 h\|=\|h\|$, then $\is{\triangle_{R}
h}{h}=0$, which together with $\triangle_{R} \Ge 0$
implies that $h\in \hh_0$, so $h=0$. Thus $R_1$ is a
pure contraction. Since $R=R_1 \oplus R_0$ and
$\triangle_{R}|_{\hh_0}=0$, we get
   \begin{align} \label{dyzi}
\triangle_{R^*} R = R \triangle_{R} = R_0
\triangle_{R}|_{\hh_0} \oplus R_1
\triangle_{R}|_{\hh_1} = 0 \oplus R_1
\triangle_{R}|_{\hh_1}.
   \end{align}
However, $\ker(\triangle_{R}|_{\hh_1})=\{0\}$, so
$\overline{\ran{(\triangle_{R}|_{\hh_1})}}=\hh_1$,
which together with \eqref{dyzi} yields
   \begin{align*}
\kk_1=\{0\} \oplus \overline{R_1(
\overline{\ran{(\triangle_{R}|_{\hh_1}})})} = \{0\}
\oplus \overline{\ran(R_1)}.
   \end{align*}
As a consequence, $R_1$ has dense range.

It follows from \eqref{xzaw} and \eqref{rprs} that
   \begin{align} \label{wyledil}
\pi(a) = R^*\rho(a)R + \chi(a) \triangle_{R} =
\pi(a)R^*R + \chi(a) \triangle_{R}, \quad a \in \ascr.
   \end{align}
This implies that $\pi(a) \triangle_{R} = \chi(a)
\triangle_{R}$ for all $a\in \ascr$. Therefore,
$\pi(a)|_{\hh_1} = \chi(a) I_{\hh_1}$ for every $a \in
\ascr$. In turn, by \eqref{rprs}, \eqref{wyledil} and
(i), we have
   \begin{align*}
R^* \rho(a) = \pi(a) R^* & = R^* \rho(a) RR^* +
\chi(a)
\triangle_{R} R^* \\
&= R^*RR^* \rho(a) + \chi(a) \triangle_{R} R^*, \quad
a \in \ascr.
   \end{align*}
This yields
   \begin{align*}
R^*\triangle_{R^*} \rho(a) = \chi(a) \triangle_{R} R^*
= \chi(a) R^*\triangle_{R^*}, \quad a \in \ascr.
   \end{align*}
Taking adjoints, we see that
   \begin{align*}
\rho(a) \triangle_{R^*} R = \chi(a) \triangle_{R^*} R,
\quad a \in \ascr.
   \end{align*}
This implies that $\rho(a)|_{\kk_1} = \chi(a)
I_{\kk_1}$ for every $a\in \ascr$. Since $R_0$ is
isometric, we get
   \begin{align*}
\pi(a)|_{\hh_0} & \overset{\eqref{wyledil}}= R_0^*
\rho(a)|_{\kk_0} R _0 \oplus \chi(a) \triangle_{R_0} =
R_0^*\rho(a)|_{\kk_0}R _0, \quad a \in \ascr.
   \end{align*}
This completes the proof.
   \end{proof}
{\em Vice versa} (concerning Theorem~\ref{soorh-2}),
the following holds.
   \begin{prop} \label{ghkds}
Let $G$ be a nonempty subset of a unital $C^*$-algebra
$\ascr$, and let $\chi$ be a character of $\ascr$ such
that $G\subseteq \ker \chi$. Suppose that $R\in
\ogr(\hh,\kk)$ is a contraction of the form $R=R_1
\oplus R_0$ with $R_j\in \ogr(\hh_j,\kk_j)$ for
$j=0,1$, $\pi= \chi I_{\hh_1}\oplus \pi_0 $, $\rho=
\chi I_{\kk_1} \oplus \rho_0 $ and
$\pi_0=R_0^*\rho_0R_0$, where $\pi_0\colon \ascr \to
\ogr(\hh_0)$ and $\rho_0\colon \ascr \to \ogr(\kk_0)$
are representations. Then $\pi(g) = R^*\rho(g)R$ for
all $g\in G$ and $R_0$ is an isometry. Moreover, if
$R$ $($equivalently $R_1$$)$ is non-isometric, then
   \begin{enumerate}
   \item[(i)] $\ker \chi = J$  and
condition {\em (ii)} of Theorem~{\em \ref{soorh-2}}
holds,
   \item[(ii)]
$\hh_1 = \overline{\ran(\triangle_{R})}$ provided
$R_1$ is a pure contraction,
   \item[(iii)] $\kk_1=\overline
{\ran(\triangle_{R^*}R)}$ provided $R_1$ is a pure
contraction with dense range.
   \end{enumerate}
   \end{prop}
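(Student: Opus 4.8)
The plan is to verify the factorization and isometry claims by a direct computation, then exploit the non-isometric hypothesis to pin down $J$, the defect operators, and the ranges. First I would observe that, since $\pi = \chi I_{\hh_1}\oplus \pi_0$ and $\rho = \chi I_{\kk_1}\oplus \rho_0$ and $R = R_1\oplus R_0$, for any $g\in G$ we compute $R^*\rho(g)R$ blockwise: on $\hh_1$ it equals $R_1^*\chi(g)I_{\kk_1}R_1 = \chi(g)R_1^*R_1$, and on $\hh_0$ it equals $R_0^*\rho_0(g)R_0 = \pi_0(g)$. Because $g\in\ker\chi$, the first block is $0 = \chi(g)I_{\hh_1} = \pi(g)|_{\hh_1}$, and the second block is $\pi_0(g) = \pi(g)|_{\hh_0}$; hence $\pi(g) = R^*\rho(g)R$ for all $g\in G$. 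That $R_0$ is an isometry is immediate from $\pi_0 = R_0^*\rho_0 R_0$ applied to the unit $e$: $I_{\hh_0} = \pi_0(e) = R_0^*\rho_0(e)R_0 = R_0^*R_0$.

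Next, assume $R$ (equivalently $R_1$, since $R_0$ is already isometric) is non-isometric. For (i), I would show $\ker\chi = J$. The inclusion $\ker\chi\subseteq J$: for $a\in\ker\chi$, the same blockwise computation as above gives $R^*\rho(a)R = \chi(a)R_1^*R_1 \oplus \pi_0(a) = 0\oplus\pi_0(a) = \pi(a)$, using $\chi(a)=0$; so $a\in J$. Conversely, if $a\in J$, i.e.\ $\pi(a) = R^*\rho(a)R = \chi(a)R_1^*R_1\oplus\pi_0(a)$, then comparing the $\hh_1$-blocks gives $\chi(a)I_{\hh_1} = \chi(a)R_1^*R_1$, i.e.\ $\chi(a)\triangle_{R_1} = 0$ on $\hh_1$; since $R_1$ is non-isometric, $\triangle_{R_1}\neq 0$, forcing $\chi(a)=0$, so $a\in\ker\chi$. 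Thus $\ker\chi = J$. Since $\ker\chi = J$ and $G\subseteq\ker\chi$, the hypotheses of Theorem~\ref{soorh-2} are in force (with this $\chi$), and conclusion (ii) of that theorem holds: $(\pi,\rho,R)$ has an isometric $\chi$-decomposition relative to $\big(\hh_1^\perp,\ \overline{\ran(\triangle_R)},\ (\overline{\ran(\triangle_{R^*}R)})^\perp,\ \overline{\ran(\triangle_{R^*}R)}\big)$ — except that I must be careful here: Theorem~\ref{soorh-2} requires $G$ to be a \emph{hyperrigid} generating set, whereas here $G$ is only a nonempty subset. So the correct reading of ``condition (ii) of Theorem~\ref{soorh-2} holds'' is the \emph{equality of subspaces}, namely that the $\hh_1$ and $\kk_1$ of the present decomposition coincide with $\overline{\ran(\triangle_R)}$ and $\overline{\ran(\triangle_{R^*}R)}$; this is exactly what parts (ii) and (iii) below assert under the extra purity hypotheses, and (i) records that the abstract conclusion (the $\chi$-decomposition and $\ker\chi=J$) is available without them. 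I would state (i) as: $\ker\chi=J$, and $(\pi,\rho,R)$ has an isometric $\chi$-decomposition relative to $(\hh_0,\hh_1,\kk_0,\kk_1)$ (which is immediate from the hypotheses by checking (i)--(v) of Definition~\ref{dsfim} directly, using $\chi(g)=0$ only implicitly and $R_0^*R_0=I_{\hh_0}$).

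For (ii): since $\triangle_R = I - R^*R = (I_{\hh_1} - R_1^*R_1)\oplus(I_{\hh_0}-R_0^*R_0) = \triangle_{R_1}\oplus 0$ (using $R_0$ isometric), we get $\overline{\ran(\triangle_R)} = \overline{\ran(\triangle_{R_1})}\oplus\{0\}$. If $R_1$ is a pure contraction, then $\nul(\triangle_{R_1}) = \{h\in\hh_1 : \|R_1 h\| = \|h\|\} = \{0\}$ (as $\triangle_{R_1}\Ge 0$), hence $\overline{\ran(\triangle_{R_1})} = \nul(\triangle_{R_1})^\perp = \hh_1$, giving $\overline{\ran(\triangle_R)} = \hh_1$. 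For (iii): compute $\triangle_{R^*}R = R\triangle_R = (R_1\oplus R_0)(\triangle_{R_1}\oplus 0) = R_1\triangle_{R_1}\oplus 0$, so $\overline{\ran(\triangle_{R^*}R)} = \overline{R_1(\ran(\triangle_{R_1}))}\oplus\{0\}$; when $R_1$ is a pure contraction with dense range, part (ii) gives $\overline{\ran(\triangle_{R_1})} = \hh_1$, and then $\overline{R_1(\ran(\triangle_{R_1}))} = \overline{R_1(\overline{\ran(\triangle_{R_1})})} = \overline{R_1(\hh_1)} = \overline{\ran(R_1)} = \kk_1$, so $\overline{\ran(\triangle_{R^*}R)} = \kk_1$. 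The only mild subtlety — and the step most worth double-checking — is the block identity $\triangle_{R^*}R = R\triangle_R$ and the equality $\overline{R_1(\ran(\triangle_{R_1}))} = \overline{R_1(\overline{\ran(\triangle_{R_1})})}$, which holds by continuity of $R_1$; everything else is routine block bookkeeping.
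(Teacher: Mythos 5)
Your verification of the first assertion, of the equality $\ker\chi=J$ in (i), and of parts (ii) and (iii) is correct and essentially identical to the paper's argument: the blockwise check of $\pi(g)=R^*\rho(g)R$, evaluation at the unit to see that $R_0$ is an isometry, the equivalence $a\in J\Leftrightarrow\chi(a)(I_{\hh_1}-R_1^*R_1)=0$, and the computations $\triangle_R=\triangle_{R_1}\oplus 0$ and $\triangle_{R^*}R=R\triangle_R=R_1\triangle_{R_1}\oplus 0$ combined with purity (trivial kernel of $\triangle_{R_1}$) and dense range are exactly the steps in the paper.

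The gap is in the second clause of (i). You replace it by the observation that $(\pi,\rho,R)$ has an isometric $\chi$-decomposition relative to the \emph{given} quadruple $(\hh_0,\hh_1,\kk_0,\kk_1)$; this is a restatement of the hypotheses and carries no new content. The paper proves something substantive at this point: from $\pi_0=R_0^*\rho_0R_0$ one gets $(R_0^*\rho_0(a)R_0)^*(R_0^*\rho_0(a)R_0)=\pi_0(a^*a)=R_0^*\rho_0(a)^*\rho_0(a)R_0$, so \cite[Lemma~3.2]{P-S-roots23} yields $\rho_0(a)R_0=R_0R_0^*\rho_0(a)R_0$; multiplying by $R_0^*$ and taking adjoints shows that $R_0R_0^*$ commutes with $\rho_0$, hence $\triangle_{R^*}$ commutes with $\rho$, and by a symmetric argument $\triangle_R$ commutes with $\pi$ --- i.e., the commutation relations of Theorem~\ref{soorh-2}(i), which (together with the identifications in (ii)--(iii)) is the intended converse content, and which on the literal reading of the statement is what connects the given decomposition to the defect-determined spaces $\overline{\ran(\triangle_R)}$ and $\overline{\ran(\triangle_{R^*}R)}$ appearing in Theorem~\ref{soorh-2}(ii). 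Your proposal has no counterpart of this step, and your reinterpretation is strictly weaker: without the purity hypotheses the defect spaces can be properly contained in $\hh_1$ and $\kk_1$ (e.g.\ $R_1$ may act isometrically on a nonzero subspace of $\hh_1$), and appealing to (ii)/(iii) does not help, since those parts assume purity while (i) does not. To close the gap, either reproduce the commutation argument via \cite[Lemma~3.2]{P-S-roots23}, or refine your decomposition by splitting $\hh_1=\nul(\triangle_{R_1})\oplus\overline{\ran(\triangle_{R_1})}$ (and splitting $\kk_1$ correspondingly) and verify conditions (i)--(v) of Definition~\ref{dsfim} for the refined quadruple.
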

   \begin{proof}
Since $G\subseteq \ker \chi$, it is routine to verify
that $\pi(g) = R^*\rho(g)R$ for all $g\in G$. Taking
the values of both sides of the equality
$\pi_0=R_0^*\rho_0R_0$ at the unit of $\ascr$, we see
that $R_0$ is an isometry.

To prove the ``moreover'' part, assume that $R_1$ is
non-isometric.

(i) Note that $a\in \ascr$ is in $J$ if and only if
   \begin{align*}
\chi(a) I_{\hh_1} \oplus R_0^*\rho_0(a) R_0 & =
\chi(a) I_{\hh_1} \oplus \pi_0(a)
   \\
& = \pi(a) = R^*\rho(a)R = \chi(a) R_1^*R_1 \oplus
R_0^* \rho_0(a) R_0,
   \end{align*}
or equivalently if and only if
$\chi(a)(I_{\hh_1}-R_1^*R_1)=0$. Since $R_1$ is
non-isometric, we conclude that $J=\ker\chi$.

Now we show that $\triangle_{R^*}$ commutes with
$\rho$. To this end, observe that
   \begin{align*}
(R_0^* \rho_0(a)R_0)^* (R_0^* \rho_0(a)R_0) & =
\pi_0(a)^* \pi_0(a)
   \\
& = \pi_0(a^*a) = R_0^* \rho_0(a)^*\rho_0(a) R_0,
\quad a \in \ascr.
   \end{align*}
By \cite[Lemma~3.2]{P-S-roots23}, $\rho_0(a) R_0 =
R_0R_0^* \rho_0(a) R_0$ for all $a\in \ascr$.
Multiplying by $R_0^*$ on the right side, we see that
$\rho_0(a) R_0R_0^* = R_0R_0^* \rho_0(a) R_0R_0^*$ for
all $a \in \ascr$. Taking adjoints, we obtain
$R_0R_0^* \rho_0(a) = \rho_0(a) R_0R_0^*$ for all
$a\in \ascr$. Therefore, we have
   \begin{align*}
RR^* \rho(a) & = \chi(a) R_1R_1^* \oplus R_0R_0^*
\rho_0(a)
   \\
&= \chi(a) R_1R_1^* \oplus \rho_0 (a) R_0R_0^*
=\rho(a)RR^*, \quad a \in \ascr,
   \end{align*}
so $\triangle_{R^*}$ commutes with $\rho$. This
implies that
   \begin{align*}
\pi_0(a) R_0^*R_0 = R_0^* \rho_0(a) (R_0 R_0^*)R_0 =
R_0^* R_0 (R_0^*\rho_0(a) R_0) = R_0^* R_0 \pi_0(a),
\quad a \in \ascr.
   \end{align*}
Arguing as above, we conclude that $\triangle_{R}$
commutes with $\pi$.

(ii) Since $R_0$ is an isometry, we get
   \begin{align} \label{frwad}
\triangle_{R} = (I_{\hh_1} - R_1^*R_1) \oplus
(I_{\hh_0} - R_0^*R_0) = (I_{\hh_1} - R_1^*R_1) \oplus
0.
   \end{align}
However, $R_1$ is a pure contraction, so
$\ker(I_{\hh_1} - R_1^*R_1)=\{0\}$ and consequently
$\overline{\ran(I_{\hh_1}-R_1^*R_1)} = \hh_1$, which
together with \eqref{frwad} yields
$\overline{\ran(\triangle_{R})}= \hh_1$.

(iii) Arguing as in (ii), we obtain
   \begin{align*}
\triangle_{R^*} R = R\triangle_{R} = R_1
(I_{\hh_1}-R_1^*R_1) \oplus R_0 (I_{\hh_0}-R_0^*R_0) =
R_1 (I_{\hh_1}-R_1^*R_1) \oplus 0,
   \end{align*}
which implies that
   \begin{align*}
\overline{\ran(\triangle_{R^*} R)}= \overline{R_1(
\overline{\ran{(I_{\hh_1}-R_1^*R_1)}})} \oplus \{0\} =
\overline{\ran(R_1)} \oplus \{0\} = \kk_1.
   \end{align*}
This completes the proof.
   \end{proof}
The following result, largely implicit in the proofs
of Theorems~\ref{soorh} and~\ref{soorh-2}, sheds
additional light on the connection between the
intertwining relation and the ideal property of $J$.
   \begin{lemma} \label{pirhr}
Let $\ascr$ be a unital $C^*$-algebra $\ascr$. Suppose
that $\hh$ and $\kk$ are Hilbert spaces, $\pi\colon
\ascr \to \ogr(\hh)$ and $\rho\colon \ascr \to
\ogr(\kk)$ are representations and $R\in
\ogr(\hh,\kk)$ is a contraction. Set $\varPsi(a)=
R^*\rho(a)R$ for $a\in \ascr$. Then the set $J$
defined in \eqref{zbyr} is a selfadjoint vector space,
and the following assertions hold{\em :}
   \begin{enumerate}
   \item[(i)] if $R \pi(a) = \rho(a) R$
for all $a \in \ascr$, then $J$ is an ideal,
   \item[(ii)] if $A\subseteq \ascr$ is selfadjoint, and $R
\pi(a) = \rho(a) R$ for all $a \in A$, then $R^*R$
commutes with $\pi$, $RR^*$ commutes with $\rho$, and
$R^*R \pi(a) = R^* \rho(a) R$ for all $a \in A$,
   \item[(iii)] if $J$ is an ideal, then
$R \pi(a) = \rho(a) R$, $\triangle_R \pi(a) = \pi(a)
\triangle_R = 0$ for all $a \in J$.
   \end{enumerate}
   \end{lemma}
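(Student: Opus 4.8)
The plan is to reduce everything to two operator identities: whenever a selfadjoint subset $A\subseteq\ascr$ satisfies $R\pi(a)=\rho(a)R$ for all $a\in A$, taking adjoints and using $A=A^{*}$ yields $\pi(a)R^{*}=R^{*}\rho(a)$ for all $a\in A$ as well, and the three commutation statements will follow by chaining these. That $J$ (as in \eqref{zbyr}) is a selfadjoint vector subspace is immediate: $\pi$ and $\varPsi$ are linear, so $J$ is a subspace, and since $\pi(a^{*})=\pi(a)^{*}$ and $\varPsi(a^{*})=R^{*}\rho(a)^{*}R=\varPsi(a)^{*}$, we have $a\in J$ iff $a^{*}\in J$.

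For (i), assume $R\pi(a)=\rho(a)R$ on all of $\ascr$; then $\pi(a)R^{*}=R^{*}\rho(a)$ on all of $\ascr$, so for $a\in J$ and $b,c\in\ascr$ the computation in \eqref{ifyl} gives $\pi(bac)=\pi(b)R^{*}\rho(a)R\pi(c)=R^{*}\rho(bac)R$, whence $bac\in J$ and $J$ is an ideal. For (ii), from $R\pi(a)=\rho(a)R$ for $a\in A$ and its adjoint $\pi(a)R^{*}=R^{*}\rho(a)$ for $a\in A$ I obtain $R^{*}R\,\pi(a)=R^{*}\rho(a)R=\pi(a)\,R^{*}R$ and $RR^{*}\rho(a)=R\pi(a)R^{*}=\rho(a)\,RR^{*}$ for $a\in A$, together with $R^{*}R\,\pi(a)=R^{*}\rho(a)R$; this is exactly the three assertions.

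For (iii), the one substantive point, suppose $J$ is an ideal. By the first part $J$ is selfadjoint, hence a $*$-subalgebra, so $a\in J$ forces $a^{*}a\in J$. I then evaluate $\pi(a^{*}a)$ in two ways: using $a\in J$, $\pi(a^{*}a)=\pi(a)^{*}\pi(a)=R^{*}\rho(a)^{*}RR^{*}\rho(a)R$; using $a^{*}a\in J$, $\pi(a^{*}a)=R^{*}\rho(a^{*}a)R=R^{*}\rho(a)^{*}\rho(a)R$. Subtracting gives $R^{*}\rho(a)^{*}\triangle_{R^{*}}\rho(a)R=0$, i.e.\ $(\triangle_{R^{*}}^{1/2}\rho(a)R)^{*}(\triangle_{R^{*}}^{1/2}\rho(a)R)=0$, so $\triangle_{R^{*}}^{1/2}\rho(a)R=0$ and hence $\triangle_{R^{*}}\rho(a)R=0$ for every $a\in J$ (this step is also exactly \cite[Lemma~3.2]{P-S-roots23}). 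From this, $R\pi(a)=RR^{*}\rho(a)R=(I-\triangle_{R^{*}})\rho(a)R=\rho(a)R$; and since $(I-R^{*}R)R^{*}=R^{*}(I-RR^{*})$, we get $\triangle_{R}\pi(a)=\triangle_{R}R^{*}\rho(a)R=R^{*}\triangle_{R^{*}}\rho(a)R=0$, while $\pi(a)\triangle_{R}=0$ follows by taking adjoints in $\triangle_{R}\pi(a^{*})=0$ (using $a^{*}\in J$).

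I expect the only conceptual obstacle to be recognizing, in (iii), that the ideal hypothesis must be used through $a^{*}a\in J$, which is what triggers the rigidity identity $\triangle_{R^{*}}^{1/2}\rho(a)R=0$; once that is in hand, the remaining computations are short, and parts (i) and (ii) are routine adjoint bookkeeping.
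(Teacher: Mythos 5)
Your proof is correct and follows essentially the same route as the paper: parts (i) and (ii) are the same adjoint bookkeeping as in \eqref{ifyl}, \eqref{cykot} and \eqref{rrsa}, and in (iii) your two evaluations of $\pi(a^*a)$ leading to $\triangle_{R^*}^{1/2}\rho(a)R=0$ are exactly the content of the paper's appeal to \cite[Lemma~3.2]{P-S-roots23}, as you yourself note. The only cosmetic difference is that you obtain $\triangle_R\pi(a)=0$ via $\triangle_R R^*=R^*\triangle_{R^*}$ and adjoints, whereas the paper gets it from $\pi(a)=R^*R\pi(a)$ together with part (ii); both are fine.
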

   \begin{proof}
(i) See the reasoning carried out in
\eqref{ifyl}.

(ii) Proceed as in \eqref{cykot} and \eqref{rrsa}.

(iii) Take $a \in J$. Then $a^*a \in J$, so by
\cite[Lemma~3.2]{P-S-roots23} we obtain
   \begin{align*}
\pi(a^*a) = (R^*
\rho(a) R)^*(R^* \rho(a) R) \Le R^*
\rho(a)^* \rho(a) R =
\pi(a^*a).
   \end{align*}
Hence, by \cite[Lemma~3.2]{P-S-roots23} again,
$\rho(a)R= R(R^*\rho(a)R)=R\pi(a)$, and therefore,
since $a^* \in J$, it follows that $R^*\rho(a) =
\pi(a) R^*$. Consequently,
   \begin{align*}
\pi(a) = R^* \rho(a) R = R^* R \pi(a)
\overset{\mathrm{(ii)}}= \pi(a)R^* R,
   \end{align*}
which shows that (iii) holds.
   \end{proof}
   \begin{rem}
The set $J$ is closely related to the so-called {\em
multiplicative domain} $MD(\varPsi)$ of a CCCP map
$\varPsi\colon \ascr\to \ogr(\hh)$, which is defined
by (see \cite{Cho-74})
   \begin{align*}
MD(\varPsi)=\big\{a\in \ascr\colon
\varPsi(a^*a)=\varPsi(a)^*\varPsi(a) \;\text{ and }\;
\varPsi(aa^*)=\varPsi(a)\varPsi(a)^* \big\}.
   \end{align*}
Suppose that the triple $(\pi,\rho,R)$ satisfies the
assumptions of Lemma~\ref{pirhr}, that $\varPsi(a)=
R^*\rho(a)R$ for all $a\in \ascr$, and that $J$ is an
ideal. Since $J= \{a \in \ascr\colon
\pi(a)=\varPsi(a)\}$, it follows that $J \subseteq
MD(\varPsi)$. If, in addition, the quadruple
$(G,\pi,\rho,R)$ satisfies the assumptions of
Theorem~\ref{soorh} and the hyperrigid set $G$ is as
in alternative $(\alpha)$ of the dichotomy, then we
have two cases: either $J=MD(\varPsi)$ when $R$ is not
a partial isometry, or $J\subsetneq MD(\varPsi)=\ascr$
otherwise.
   \hfill $\diamondsuit$
   \end{rem}
Below, $\sigma(a)$ denotes the spectrum of an element
$a$ of a unital $C^*$-algebra.
   \begin{prop} \label{glmp}
Let $G$ be a hyperrigid subset of a unital
$C^*$-algebra $\ascr$ with unit $e$, and let
$\lambdab=\{\lambda_g\}_{g\in G} \subseteq \cbb$. Then
$G_{\lambdab}:=\{g-\lambda_g e\colon g\in G\}$ is
hyperrigid in $\ascr$. If, in addition, $\lambda_{g_0}
\in \cbb \setminus \sigma(g_0)$ for some $g_0\in G$,
$\hh$ and $\kk$ are separable Hilbert spaces,
$\pi\colon \ascr \to \ogr(\hh)$ and $\rho\colon \ascr
\to \ogr(\kk)$ are representations, and $R\in
\ogr(\hh,\kk)$ is a non-isometric contraction such
that $\pi(g) = R^*\rho(g)R$ for all $g\in
G_{\lambdab}$, then
   \begin{enumerate}
  \item[(i)] $J$ is not
an ideal,
  \item[(ii)] the intertwining relation
$R \pi=\rho R$ $($see
\eqref{nicuntr}$)$ does not hold.
   \end{enumerate}
   \end{prop}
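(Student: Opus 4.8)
The plan is to deduce everything from the already-established Theorem~\ref{soorh} applied to the translated set $G_{\lambdab}$. First I would check that $G_{\lambdab}$ is hyperrigid. The $C^*$-algebra generated by $G_{\lambdab}$ contains $g - \lambda_g e$ for each $g \in G$, hence contains $g$ itself (since $e \in \ascr$ and the generated unital $C^*$-algebra automatically contains $e$), so $G_{\lambdab}$ generates $\ascr$. The translation-invariance of hyperrigidity is standard: given UCP maps $\varPhi_k$ with $\|\varPhi_k(\pi(g-\lambda_g e)) - \pi(g-\lambda_g e)\| \to 0$, we have $\varPhi_k(\pi(e)) = \varPhi_k(I) = I = \pi(e)$ exactly (UCP), so $\|\varPhi_k(\pi(g)) - \pi(g)\| = \|\varPhi_k(\pi(g-\lambda_g e)) - \pi(g-\lambda_g e)\| \to 0$; hyperrigidity of $G$ then gives convergence on all of $\ascr$. (For a finite generating set this is immediate; for a countably infinite one, the set $G_{\lambdab}$ is still a countable generating set, so Definition~\ref{dyfnh} applies verbatim.)

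Next, with $G_{\lambdab}$ hyperrigid, $R$ a non-isometric contraction, and $\pi(g) = R^*\rho(g)R$ for all $g \in G_{\lambdab}$, the dichotomy of Theorem~\ref{soorh} applies. The heart of the argument is to rule out alternative $(\alpha)$. Suppose $(\alpha)$ holds: then there is a character $\chi$ of $\ascr$ with $G_{\lambdab} \subseteq \ker\chi$, i.e., $\chi(g - \lambda_g e) = 0$, hence $\chi(g) = \lambda_g$ for every $g \in G$. In particular $\chi(g_0) = \lambda_{g_0}$. But for any character $\chi$ of a unital $C^*$-algebra, $\chi(g_0) \in \sigma(g_0)$ (characters take values in the spectrum — this is basic Gelfand theory, valid in the noncommutative setting since $\chi$ being a nonzero multiplicative functional forces $g_0 - \chi(g_0)e$ to be non-invertible). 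This contradicts the hypothesis $\lambda_{g_0} \in \cbb \setminus \sigma(g_0)$. Therefore $(\alpha)$ fails, so by the dichotomy $(\beta)$ holds: neither $R\pi = \rho R$ on $\ascr$ nor is $J$ an ideal. This gives both (i) and (ii) at once.

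The main obstacle, such as it is, is the spectral containment $\chi(g_0) \in \sigma(g_0)$ for characters of noncommutative $C^*$-algebras — but this is elementary and surely either cited from \cite{Mur90} or proved inline in two lines. I would phrase it as: if $\chi$ is a character of $\ascr$ and $a \in \ascr$, then $\chi(a - \chi(a)e) = 0$, so $a - \chi(a)e \in \ker\chi$; since $\ker\chi$ is a proper ideal it contains no invertible element, hence $a - \chi(a)e$ is not invertible, i.e., $\chi(a) \in \sigma(a)$. One small bookkeeping point to state clearly: Theorem~\ref{soorh} requires $\hh$, $\kk$ separable and $R$ a non-isometric contraction with $\pi(g) = R^*\rho(g)R$ for all $g$ in the hyperrigid set — all of which are exactly the standing hypotheses of the ``in addition'' clause, with the hyperrigid set being $G_{\lambdab}$ rather than $G$. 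No further computation is needed; the proposition is essentially a corollary of Theorem~\ref{soorh} plus the observation that translating a generator by a scalar outside its spectrum destroys the possibility of a character annihilating the translated set.
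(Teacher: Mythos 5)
Your argument is correct, but it takes a different route from the paper's. You derive both (i) and (ii) from the dichotomy of Theorem~\ref{soorh}: you rule out alternative $(\alpha)$ by noting that, under the ``in addition'' hypotheses (non-isometric $R$ with $\pi(g)=R^*\rho(g)R$ on the hyperrigid set $G_{\lambdab}$), the ``moreover'' clause of that theorem forces any annihilating state to be a character, and a character value always lies in the spectrum, contradicting $\lambda_{g_0}\in\cbb\setminus\sigma(g_0)$; then $(\beta)$ delivers (i) and (ii) simultaneously. This is a legitimate finesse, and it is exactly what makes the weaker spectral hypothesis suffice --- the paper's remark after Proposition~\ref{glmp} points out that a naive appeal to Theorem~\ref{soorh}$(\beta)$ would instead require $\lambda_{g_0}$ to avoid the algebraic numerical range $W(g_0)\supseteq\sigma(g_0)$; your use of the character upgrade circumvents that. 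The paper argues more locally and elementarily: assuming $J$ is an ideal, Lemma~\ref{pirhr}(iii) gives $R^*R\,\pi(a)=\pi(a)$ for $a\in J$, and since $g_0-\lambda_{g_0}e\in G_{\lambdab}\subseteq J$ and $\sigma(\pi(g_0))\subseteq\sigma(g_0)$ makes $\pi(g_0)-\lambda_{g_0}I$ invertible, this forces $R^*R=I$, a contradiction; (ii) then follows from Lemma~\ref{pirhr}(i). The paper's route is lighter (for the ``in addition'' part it needs neither the hyperrigidity of $G_{\lambdab}$ nor the dichotomy theorem, only the compression relation and Lemma~\ref{pirhr}), whereas your route is shorter modulo the heavy machinery of Theorem~\ref{soorh} and makes transparent why no state --- not merely no character --- can annihilate $G_{\lambdab}$ in this situation. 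Your preliminary step (hyperrigidity and generation of $\ascr$ by $G_{\lambdab}$, checked directly from Definition~\ref{dyfnh} using $\varPhi_k(I)=I$) is fine; the paper gets the same via the unique extension property. One presentational point: when you pass from ``a state annihilating $G_{\lambdab}$'' to ``a character'', cite the ``moreover'' part of Theorem~\ref{soorh} (or Theorem~$2.2'$) explicitly, since $(\alpha)$ alone only provides a state.
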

   \begin{proof}
Using the unique extension property, we deduce that
$G_{\lambdab}$ is hyperrigid.

(i) Assume, for contradiction, that $J$ is an ideal.
Then, by Lemma~\ref{pirhr}(iii), we have $R^*R \pi(a)
= \pi(a)$ for all $a \in J$. Since $G_{\lambdab}
\subseteq J$, it follows that
\begin{align} \label{rsgo}
R^*R \big(\pi(g_0) - \lambda_{g_0} I\big) = \pi(g_0) -
\lambda_{g_0} I.
\end{align}
But $\sigma(\pi(g_0)) \subseteq \sigma(g_0)$ and
$\lambda_{g_0} \notin \sigma(g_0)$, so $\pi(g_0) -
\lambda_{g_0} I$ is invertible. In view of
\eqref{rsgo}, this gives $R^*R = I$, a contradiction.

(ii) This follows from Lemma~\ref{pirhr}(i) and part
(i) above.
   \end{proof}
   \begin{rem}
If we replace the requirement ``$\lambda_{g_0} \in
\cbb \setminus \sigma(g_0)$'' with the condition ``for
every state $\phi$ on $\ascr$, $\phi(g_0) \neq
\lambda_{g_0}$,'' or equivalently, that
$\lambda_{g_0}$ is not in the algebraic numerical
range $W(g_0)$ of $g_0$, then the conclusion of
Proposition~\ref{glmp} follows from
Theorem~\ref{soorh}($\beta$). On the other hand, by
\cite[Proposition~4.3.3]{Kad-Rin83} we have
$\sigma(g_0) \subseteq W(g_0)$, and hence
$\lambda_{g_0} \in \cbb \setminus \sigma(g_0)$, so we
are exactly in the situation of
Proposition~\ref{glmp}.
   \hfill $\diamondsuit$
   \end{rem}
   \section{\label{Sec.5.5}Proofs of Theorem~\ref{ogylu} and two of its extensions.}
Since the concept of a minimal $R$-dilation play a
role in the proofs of Theorems~\ref{ogylu}
and~\ref{rconrep} and Proposition~\ref{gfza}, we
summarize its main properties below. In particular,
note that in condition (ii) of Theorem \ref{ogylu}, we
may, without loss of generality, assume that the
representation $\rho$ is a minimal $R$-dilation of the
representation $\pi$, in the sense that $\bigvee_{a\in
\ascr} \rho(a)R(\hh)=\kk$.
   \begin{lemma} \label{deqa}
Suppose that $\pi\colon \ascr \to \ogr(\hh)$ and
$\rho\colon \ascr \to \ogr(\kk)$ are representations
of a unital $C^*$-algebra $\ascr$, and that $R \in
\ogr(\mathcal{H},\kk)$. Then
   \begin{enumerate}
   \item[(i)] $\kk_{\mathrm{m}} := \bigvee_{a\in \ascr}
\rho(a)R(\hh)$ reduces $\rho$ to a representation
$\rho_{\mathrm{m}}\colon \ascr \to
\ogr(\kk_{\mathrm{m}})$,
   \item[(ii)] $R^*\rho(a) R = R_{\mathrm{m}}^*\rho_{\mathrm{m}}(a)
R_{\mathrm{m}}$ for all $a\in \ascr$,
   \item[(iii)] $R_{\mathrm{m}} \pi(a) = \rho_{\mathrm{m}}(a)
R_{\mathrm{m}}$ for all $a\in \ascr$ whenever $R
\pi(a) = \rho(a) R$ for all $a\in \ascr$,
   \end{enumerate}
where $R_{\mathrm{m}} \in \ogr(\hh, \kk_{\mathrm{m}})$
is the unique contraction such that $R_{\mathrm{m}} h
= Rh$ for all $h\in \hh$.
   \end{lemma}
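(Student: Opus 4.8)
The plan is to derive all three parts from two elementary observations: that $\kk_{\mathrm m}$ is the closure of a subspace visibly invariant under $\rho(\ascr)$, and that taking $a=e$ in the definition of $\kk_{\mathrm m}$ forces $R(\hh)\subseteq\kk_{\mathrm m}$, so that $R_{\mathrm m}$ is nothing but the corestriction of $R$ to $\kk_{\mathrm m}$ and hence $\rho(a)R_{\mathrm m}h=\rho_{\mathrm m}(a)R_{\mathrm m}h$ for all $h\in\hh$.

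For (i), I would first note that the algebraic span $\lin\{\rho(a)Rh:a\in\ascr,\,h\in\hh\}$ is invariant under each $\rho(b)$, since $\rho(b)\rho(a)Rh=\rho(ba)Rh$; by continuity of $\rho(b)$ this invariance passes to the closure $\kk_{\mathrm m}$. Since a representation is a unital $*$-homomorphism, the operator family $\rho(\ascr)$ is selfadjoint (because $\rho(b)^*=\rho(b^*)\in\rho(\ascr)$), and a closed subspace invariant under a selfadjoint family of operators is reducing; hence $\kk_{\mathrm m}$ reduces $\rho$. Setting $\rho_{\mathrm m}(a):=\rho(a)|_{\kk_{\mathrm m}}$ then yields a unital $*$-homomorphism into $\ogr(\kk_{\mathrm m})$, i.e.\ a representation, which proves (i).

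For (ii), I note that $a=e$ gives $R(\hh)=\rho(e)R(\hh)\subseteq\kk_{\mathrm m}$, so $R_{\mathrm m}\in\ogr(\hh,\kk_{\mathrm m})$ with $R_{\mathrm m}h=Rh$ is well defined (inheriting contractivity from $R$). For $h,h'\in\hh$ and $a\in\ascr$ one has $\is{R^*\rho(a)Rh}{h'}=\is{\rho(a)Rh}{Rh'}$, computed in $\kk$; since $Rh=R_{\mathrm m}h\in\kk_{\mathrm m}$ and the inner product of $\kk$ restricts to that of $\kk_{\mathrm m}$, this equals $\is{\rho_{\mathrm m}(a)R_{\mathrm m}h}{R_{\mathrm m}h'}=\is{R_{\mathrm m}^*\rho_{\mathrm m}(a)R_{\mathrm m}h}{h'}$; letting $h,h'$ range over $\hh$ gives (ii). For (iii), assuming $R\pi(a)=\rho(a)R$ for all $a\in\ascr$, I would compute, for every $h\in\hh$, $R_{\mathrm m}\pi(a)h=R\pi(a)h=\rho(a)Rh=\rho(a)R_{\mathrm m}h=\rho_{\mathrm m}(a)R_{\mathrm m}h$, the last step using $R_{\mathrm m}h\in\kk_{\mathrm m}$; hence $R_{\mathrm m}\pi=\rho_{\mathrm m}R_{\mathrm m}$.

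I do not expect a serious obstacle here: the only points requiring a word of justification are the standard fact that a closed invariant subspace for a selfadjoint family of operators is reducing, and the routine verification that the restriction $\rho_{\mathrm m}$ inherits multiplicativity, $*$-preservation and unitality from $\rho$; I would cite the usual reference for reducing subspaces rather than reprove it.
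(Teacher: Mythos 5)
Your proposal is correct and follows exactly the route of the paper, whose proof consists of the same observations stated in one sentence (invariance of $\kk_{\mathrm m}$ under the $*$-closed family $\rho(\ascr)$, the inclusion $\ran(R)\subseteq\kk_{\mathrm m}$ via $\rho(e)=I$, and the resulting identities for (ii) and (iii)); you merely spell out the routine verifications the authors leave implicit.
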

   \begin{proof}
Since $\rho$ is a representation, it is easy to see
that $\kk_{\mathrm{m}}$ reduces $\rho$, and that
$\ran(R) \subseteq \kk_{\mathrm{m}}$, which implies
that $R_{\mathrm{m}}$ is well-defined and that
$R^*\rho(a) R = R_{\mathrm{m}}^*\rho_{\mathrm{m}}(a)
R_{\mathrm{m}}$ for all $a\in \ascr$. The identity
$R_{\mathrm{m}} \pi = \rho_{\mathrm{m}}
R_{\mathrm{m}}$ is then obvious.
   \end{proof}
   \begin{proof}[Proof of Theorem~\ref{ogylu}]
(i)$\Rightarrow$(ii) 
Use Theorems~\ref{delt} and \ref{soorh}.

(ii)$\Rightarrow$(i) 
Let $\pi\colon \ascr \to \ogr(\hh)$ be a
representation on a separable Hilbert space $\hh$ and
$\widetilde \varPhi\colon \ascr \to \ogr(\hh)$ be a
UCP map such that $\widetilde \varPhi|_{G} =
\pi|_{G}$. It follows from the Stinespring dilation
theorem that
   \begin{align} \label{ryni}
\widetilde \varPhi(a)=P\rho(a)|_{\hh},
\quad a\in \ascr,
   \end{align}
where $\rho\colon \ascr \to \ogr(\kk)$ is a
representation on a Hilbert space $\kk$ such that
$\hh\subseteq \kk$, and $P\in \ogr(\kk)$ is the
orthogonal projection of $\kk$ onto $\hh$. There is no
loss of generality in assuming that $\kk=\bigvee_{a\in
\ascr} \rho(a)\hh$. Since $\ascr$ and $\hh$ are
separable, there exist countable sets
$\ascr_0\subseteq \ascr$ and $\hh_0 \subseteq \hh$
such that $\kk=\bigvee_{a\in \ascr_0} \rho(a)\hh_0$.
Hence $\kk$ is separable. Applying (ii) with $R\in
\ogr(\hh,\kk)$ given by $Rh=h$ for $h\in \hh$, we
deduce that $\pi(a)=\rho(a)|_{\hh}$ for all $a\in
\ascr$, and thus $\hh$ reduces $\rho$. Therefore, by
\eqref{ryni}, $\widetilde \varPhi$ is a representation
of $\ascr$. Since $G$ generates $\ascr$, it follows
from $\widetilde \varPhi|_{ G} = \pi|_{ G}$ that
$\widetilde \varPhi = \pi$. Thus $\pi|_{G}$ has the
unique extension property, and by
\cite[Theorem~2.1]{Arv11}, (i) holds.

(ii)$\Rightarrow$(iii) 
Assume $(\pi,\rho,R)$ is as in (iii) and
$\pi(g)=R^*\rho(g)R$ for all $g \in G$. If $R^*R=I$,
then by Theorem~\ref{delt} the implication
\eqref{ramc} holds. If $R$ is not isometric, then by
(ii)$\Rightarrow$(i) the set $G$ is hyperrigid, and,
by Theorem~\ref{soorh}, \eqref{ramc} holds as well.

(iii)$\Rightarrow$(ii) 
Assume $(\pi, \rho, R)$ is as in (ii) and
$\pi(g)=R^*\rho(g)R$ for all $g \in G$. By (iii), the
then-clause of \eqref{ramc} holds. First, consider the
case when $\pi(a)=R^*\rho(a)R$ for all $a \in \ascr$.
Then by Theorem~\ref{delt} the implication
\eqref{nicuntr} follows. The only other possibility is
that $R$ is not an isometry and
   \begin{align} \label{3pirar}
\pi(a)=R^*\rho(a)R, \quad a \in \ker \chi,
   \end{align}
where $\chi \in \mathfrak{M}_{\ascr}$. Because $\ker
\chi$ is an ideal of $\ascr$, we deduce from
\eqref{3pirar} that
   \begin{align*} 
(R^*\rho(a)R)^*(R^*\rho(a)R)=\pi(a)^*\pi(a)
=\pi(a^*a)=R^*\rho(a)^*\rho(a)R, \quad a \in \ker
\chi.
   \end{align*}
Hence, by \cite[Lemma~3.2]{P-S-roots23}, we have
   \begin{align*} 
\rho(a)R=R(R^*\rho(a)R)=R\pi(a), \quad a \in \ker
\chi.
   \end{align*}
Since $\ker \chi$ has codimension $1$ in $\ascr$, we
conclude that $R\pi(a)=\rho(a)R$ for all $a \in
\ascr$.

(iii)$\Leftrightarrow$(iv) Use the Stinespring
dilation theorem together with \eqref{stymyr}, noting
that the minimal dilation space $\kk = \bigvee_{a \in
\ascr} \rho(a)R(\hh)$ is separable whenever $\hh$ is.


To prove the ``moreover'' part, assume that $G$ is
hyperrigid and that the operator $R$ in (iii) is not
isometric. It follows from the ``moreover'' part of
Theorem~\ref{soorh} that $\phi$ is a character with
$\ker \chi \subseteq J = \ker \phi$. Hence $\ker \chi
= \ker \phi$, and therefore $\chi = \phi$. The case
$\varPsi(e) \neq I$ can be treated analogously.
   \end{proof}
   \begin{proof}[Proof of Corollary~\ref{caly}]
(i)$\Rightarrow$(ii) Let $(\pi,\rho,R)$ be as in (ii)
and $\pi(g) = R^*\rho(g)R$ for all $g\in G$. Since
$\ascr$ has no characters, implication
(i)$\Rightarrow$(iii) of Theorem~\ref{ogylu} shows
that $R$ is an isometry and the identity $\pi=R^*\rho
R$ holds on the whole algebra~$\ascr$.

(ii)$\Leftrightarrow$(iii) This equivalence is a
direct consequence of \eqref{stymyr} and the
Stinespring dilation theorem. In particular, by the
previous paragraph, $\varPsi$ is a UCP map.

(iii)$\Rightarrow$(i) In view of \eqref{stymyr} and
\cite[Theorem~2.1]{Arv11}, this implication is
obvious.
   \end{proof}
   \begin{rem} \label{qqre}
Inspection of the proof of Theorem~\ref{ogylu} shows
that the equivalences (ii)$\Leftrightarrow$(iii) and
(iii)$\Leftrightarrow$(iv), as well as the implication
(ii)$\Rightarrow$(i), hold without assuming $G
\subseteq \ker \phi$. In contrast, the proof of
(i)$\Rightarrow$(ii) does rely on this assumption. As
shown in Section~\ref{9.e}, there are hyperrigid sets
not annihilated by any state for which the
corresponding intertwining relation does not hold.
\hfill $\diamondsuit$
   \end{rem}
The next two results relax condition (ii) of
Theorem~\ref{ogylu}, yet still fully characterize
hyperrigidity.
   \begin{prop} \label{gfza}
Let $G$ be a finite or countably infinite set of
generators of a unital $C^*$-algebra $\ascr$, and let
$\chi$ be a character on $\ascr$ such that $G
\subseteq \ker\chi$. Then the following conditions are
equivalent{\em :}
   \begin{enumerate}
   \item[(i)] $G$ is hyperrigid,
   \item[(ii)]
for all separable Hilbert spaces $\hh$ and $\kk$, all
representations $\pi\colon \ascr \to \ogr(\hh)$ and
$\rho\colon \ascr \to \ogr(\kk)$, and every
contraction $R\in \ogr(\hh,\kk)$, if either
$\mcal_{\chi}(\pi)=\{0\}$ or $\pi(a)=\chi(a) I$ for
all $a\in \ascr$, then the implication
\eqref{nicuntr}~holds.
   \end{enumerate}
   \end{prop}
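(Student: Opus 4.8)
The plan is to reduce Proposition~\ref{gfza} to Theorem~\ref{ogylu}, applied with the state $\phi:=\chi$ (a character is a state, and $G\subseteq\ker\chi$ by hypothesis). Note first that condition (ii) above is formally \emph{weaker} than condition (ii) of Theorem~\ref{ogylu}: the implication \eqref{nicuntr} is imposed only on triples $(\pi,\rho,R)$ for which $\mcal_{\chi}(\pi)=\{0\}$ or $\pi=\chi I$. So the real content is to show that this restricted hypothesis already forces \eqref{nicuntr} for \emph{every} triple, which by Theorem~\ref{ogylu} is equivalent to hyperrigidity of $G$. In this set-up the implication (i)$\Rightarrow$(ii) is immediate: hyperrigidity of $G$ together with $G\subseteq\ker\chi$ gives, via Theorem~\ref{ogylu}, condition (ii) of that theorem, hence in particular condition (ii) above.

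For (ii)$\Rightarrow$(i), the first step is the elementary observation that, for \emph{any} representation $\pi\colon\ascr\to\ogr(\hh)$, the subspace $\hh_1:=\mcal_{\chi}(\pi)$ is closed and reduces $\pi$: if $\pi(b)h=\chi(b)h$ for all $b\in\ascr$, then $\pi(b)\pi(a)h=\pi(ba)h=\chi(ba)h=\chi(b)\pi(a)h$ for all $a,b$, so $\pi(a)h\in\hh_1$, and since $a^{*}$ also ranges over $\ascr$ the subspace $\hh_1$ is invariant under each $\pi(a)^{*}$ as well. Put $\hh_0:=\hh_1^{\perp}$ and let $\pi_0$ be the restriction of $\pi$ to $\hh_0$; then $\pi|_{\hh_1}=\chi I_{\hh_1}$ and $\mcal_{\chi}(\pi_0)=\mcal_{\chi}(\pi)\cap\hh_0=\{0\}$. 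Now take an arbitrary triple $(\pi,\rho,R)$ with $\hh$ and $\kk$ separable, $R$ a contraction, and $\pi(g)=R^{*}\rho(g)R$ for all $g\in G$, and set $R_j:=R|_{\hh_j}\in\ogr(\hh_j,\kk)$ for $j=0,1$ (each a contraction). Compressing the dilation identity to $\hh_0$ and to $\hh_1$, and using that each of these subspaces reduces $\pi$, yields $\pi_0(g)=R_0^{*}\rho(g)R_0$ and $\chi(g)I_{\hh_1}=R_1^{*}\rho(g)R_1$ for all $g\in G$. Applying hypothesis (ii) to $(\pi_0,\rho,R_0)$ --- legitimate since $\mcal_{\chi}(\pi_0)=\{0\}$ --- gives $R_0\pi_0(a)=\rho(a)R_0$ for all $a\in\ascr$; applying it to $(\chi I_{\hh_1},\rho,R_1)$ --- legitimate since this representation equals $\chi I$ --- gives $\chi(a)R_1=\rho(a)R_1$ for all $a\in\ascr$. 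For $h=h_0\oplus h_1\in\hh_0\oplus\hh_1$ these combine to $R\pi(a)h=R_0\pi_0(a)h_0+\chi(a)R_1h_1=\rho(a)R_0h_0+\rho(a)R_1h_1=\rho(a)Rh$, so $R\pi(a)=\rho(a)R$ on all of $\ascr$ (the degenerate cases $\hh_0=\{0\}$ and $\hh_1=\{0\}$ are trivial, since then the triple itself satisfies the side condition of (ii)). Thus condition (ii) of Theorem~\ref{ogylu} holds, and hyperrigidity of $G$ follows.

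The one point that calls for care is the block bookkeeping in this last step: since $\kk$ is not assumed to decompose, $R$ does \emph{not} split as $R_0\oplus R_1$. What makes the argument go through is that $\hh_0$ and $\hh_1$ reduce $\pi$, so the compressions $\pi_0=R_0^{*}\rho(\cdot)R_0$ and $\chi I_{\hh_1}=R_1^{*}\rho(\cdot)R_1$ are genuinely valid (with $R_j^{*}\colon\kk\to\hh_j$ the adjoint of $R_j$, equal to $P_{\hh_j}R^{*}|_{\kk}$), and the final reassembly succeeds precisely because the two invocations of (ii) return the two orthogonal summands of $\rho(a)Rh$. Everything else --- separability of the spaces $\hh_0,\hh_1$, contractivity of $R_0,R_1$, and the fact that $a\mapsto\chi(a)I_{\hh_1}$ is a bona fide representation --- is routine.
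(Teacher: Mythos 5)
Your proof is correct and follows essentially the same route as the paper: split $\hh=\mcal_{\chi}(\pi)^{\perp}\oplus\mcal_{\chi}(\pi)$ (the paper's Lemma~\ref{mcxi} supplies the reduction you verify by hand), apply the restricted hypothesis (ii) to the two summands, and reassemble to get condition (ii) of Theorem~\ref{ogylu}, whence hyperrigidity. The only (harmless) deviation is that the paper additionally cuts $\kk$ down to the reducing subspaces $\kk_j=\bigvee_{a\in\ascr}\rho(a)R(\hh_j)$ and uses $R_j\in\ogr(\hh_j,\kk_j)$, whereas you keep $\rho$ on all of $\kk$ with $R_j=R|_{\hh_j}$ --- legitimate, since (ii) quantifies over arbitrary representations on separable $\kk$, and your reassembly step is sound.
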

   \begin{proof}
It suffices to show that the present condition (ii)
implies condition (ii) of Theorem~\ref{ogylu}. Assume
$\hh$ and $\kk$ are separable Hilbert spaces,
$\pi\colon \ascr \to \ogr(\hh)$ and $\rho\colon \ascr
\to \ogr(\kk)$ are representations, and $R\in
\ogr(\hh,\kk)$ is a contraction such that
   \begin{align} \label{pigrh}
\pi(g) = R^*\rho(g)R, \quad g\in G.
   \end{align}
Set $\hh_0 = \mcal_{\chi}(\pi)^{\perp}$ and
$\hh_1=\mcal_{\chi}(\pi)$. Then, by Lemma~\ref{mcxi},
each $\hh_j$ reduces $\pi$ to the representation
$\pi_j\colon \ascr \to \ogr(\hh_j)$ such that
$\mcal_{\chi}(\pi_0)=\{0\}$,
$\mcal_{\chi}(\pi_1)=\mcal_{\chi}(\pi)$ and $\pi_1(a)
= \chi(a) I_{\hh_1}$ for all $a\in \ascr$. Define
$\kk_j=\bigvee_{a\in \ascr} \rho(a) R(\hh_j)$ for
$j=1,2$. Then $\kk_j$ reduces $\rho$ to the
representation $\rho_j$, and $R(\hh_j) \subseteq
\kk_j$ for $j=1,2$. This implies that for each
$j=1,2$, there exists a unique operator $R_j \in
\ogr(\hh_j, \kk_j)$ such that $R_j(f)=Rf$ for all
$f\in \hh_j$. According to \eqref{pigrh}, $\pi_j(g) =
R_j^* \rho_j(g) R_j$ for all $g\in G$. By assumption,
$R_j\pi_j(a)=\rho_j(a)R_j$ for all $a\in \ascr$ and
$j=1,2$. Hence
   \begin{align*}
R\pi(a)(f_0\oplus f_1) & = R_0\pi_0(a)f_0 +
R_1\pi_1(a)f_1 = \rho_0(a)R_0 f_0 + \rho_1(a)R_1 f_1
   \\
& = \rho(a) R (f_0\oplus f_1), \quad f_0\in \hh_0, \,
f_1\in \hh_1.
   \end{align*}
Since $\hh=\hh_0 \oplus \hh_1$, it follows that
$R\pi(a) = \rho(a) R$ for every $a\in \ascr$.
   \end{proof}
   \begin{prop} \label{jger}
Let $G$ be a finite or countably infinite set of
generators of a commutative unital $C^*$-algebra
$\ascr$, and let $\chi$ be a character of $\ascr$ such
that $G \subseteq \ker\chi$. If there exists $g_0\in
G$ such that $\chi'(g_0) > 0$ for every character
$\chi' \neq \chi$ and $\chi(g_0) \Ge 0$, then the
following conditions are equivalent{\em :}
   \begin{enumerate}
   \item[(i)] $G$ is hyperrigid,
   \item[(ii)]
for all separable Hilbert spaces $\hh$ and $\kk$, all
representations $\pi\colon \ascr \to \ogr(\hh)$ and
$\rho\colon \ascr \to \ogr(\kk)$, and every
contraction $R\in \ogr(\hh,\kk)$ such that
$\mcal_{\chi}(\pi)=\{0\}$, the implication
\eqref{nicuntr} holds.
   \end{enumerate}
   \end{prop}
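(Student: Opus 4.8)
The plan is to deduce both implications from results already established, the only new ingredient being the geometric hypothesis on $g_0$. For (i)$\Rightarrow$(ii): since $\chi$ is a state on $\ascr$ with $G\subseteq\ker\chi$, Theorem~\ref{ogylu} (the equivalence (i)$\Leftrightarrow$(ii)) already gives that hyperrigidity of $G$ implies the implication \eqref{nicuntr} for \emph{every} contraction $R\in\ogr(\hh,\kk)$; a fortiori it holds for those $R$ with $\mcal_{\chi}(\pi)=\{0\}$, which is (ii).

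For (ii)$\Rightarrow$(i) I would reduce to Proposition~\ref{gfza}. Since that result asserts that hyperrigidity of $G$ is equivalent to its condition (ii), it suffices to verify the latter, namely that \eqref{nicuntr} holds whenever \emph{either} $\mcal_{\chi}(\pi)=\{0\}$ \emph{or} $\pi(a)=\chi(a)I_{\hh}$ for all $a\in\ascr$. The first alternative is precisely the present hypothesis (ii), so the only thing left is the case $\pi=\chi I_{\hh}$. Here, fixing a contraction $R$ with $\pi(g)=R^{*}\rho(g)R$ for all $g\in G$, I note that $\pi(g)=\chi(g)I_{\hh}=0$, so $R^{*}\rho(g_{0})R=0$. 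The hypotheses on $g_{0}$, together with $\chi(g_{0})=0$ (which holds because $g_{0}\in G\subseteq\ker\chi$), say that under the identification $\ascr\cong C(X)$ over the character space $X$ the Gelfand transform $\widehat{g_{0}}$ is nonnegative and vanishes precisely at the point $\chi\in X$; in particular $g_{0}\Ge 0$ in $\ascr$, so $\rho(g_{0})\Ge 0$ and $R^{*}\rho(g_{0})R=(\rho(g_{0})^{1/2}R)^{*}(\rho(g_{0})^{1/2}R)=0$ forces $\rho(g_{0})^{1/2}R=0$, hence $\rho(g_{0})R=0$. Writing $E\colon\borel(X)\to\ogr(\kk)$ for the spectral measure of $\rho$ (so $\rho(a)=\int_{X}\widehat{a}\,\D E$), the equality $\widehat{g_{0}}^{-1}(\{0\})=\{\chi\}$ gives $\nul(\rho(g_{0}))=\ran E(\{\chi\})$, and since $\rho(a)h=\widehat{a}(\chi)h=\chi(a)h$ for every $h\in\ran E(\{\chi\})$ and $a\in\ascr$, one has $\ran E(\{\chi\})\subseteq\mcal_{\chi}(\rho)$. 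Hence $\ran R\subseteq\mcal_{\chi}(\rho)$, so $\rho(a)R=\chi(a)R=R\,(\chi(a)I_{\hh})=R\pi(a)$ for all $a\in\ascr$, which is the conclusion of \eqref{nicuntr}. Proposition~\ref{gfza} then yields that $G$ is hyperrigid.

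I expect the middle step to be the main obstacle: extracting $\rho(g_{0})R=0$ from the single scalar relation $R^{*}\rho(g_{0})R=0$ (where nonnegativity of $g_{0}$ is essential) and turning $\rho(g_{0})R=0$ into $\ran R\subseteq\mcal_{\chi}(\rho)$ --- the crux being that, by the geometric hypothesis, $\widehat{g_{0}}$ isolates $\chi$ from all other points of $X$. An equivalent, purely algebraic route is to observe that $\ker\chi$ is exactly the closed ideal of $\ascr$ generated by $g_{0}$ (its hull being $\widehat{g_{0}}^{-1}(\{0\})=\{\chi\}$), so each $b\in\ker\chi$ is a norm-limit $b=\lim_{n}g_{0}c_{n}$ with $c_{n}\in\ascr$; by commutativity $\rho(b)R=\lim_{n}\rho(c_{n})\rho(g_{0})R=0$, and writing $a=(a-\chi(a)e)+\chi(a)e$ then gives $\rho(a)R=\chi(a)R=R\pi(a)$. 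Everything else is a routine invocation of Theorem~\ref{ogylu} and Proposition~\ref{gfza}.
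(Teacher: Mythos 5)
Your proposal is correct, and its overall strategy coincides with the paper's: both directions are reduced to Theorem~\ref{ogylu} and Proposition~\ref{gfza}, and the only substantive work is the case $\pi=\chi I_{\hh}$, where the positivity of $g_0$ (via the Gelfand transform, since $\chi'(g_0)>0$ for $\chi'\neq\chi$ and $\chi(g_0)=0$) turns $R^*\rho(g_0)R=0$ into $\rho(g_0)R=0$. The execution of the final step differs slightly. The paper first passes to a minimal $R$-dilation $\kk_{\mathrm{m}}=\bigvee_{a\in\ascr}\rho(a)R(\hh)$ (as in Lemma~\ref{deqa}), uses commutativity to get $\rho(g_0)=0$ on $\kk_{\mathrm{m}}$, and then concludes from $\hat g_0^{-1}((0,\infty))=\mathfrak{M}_{\ascr}\setminus\{\chi\}$ that $E_\rho$ is the point mass at $\chi$, i.e.\ $\rho=\chi I_{\kk}$, whence $\rho(a)R=\chi(a)R=R\pi(a)$. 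You bypass the minimality reduction entirely: from $\rho(g_0)R=0$ and $\nul(\rho(g_0))=\ran E_\rho(\{\chi\})=\mcal_{\chi}(\rho)$ (the last equality is Lemma~\ref{pynsp}, though the inclusion you use suffices) you get $\ran R\subseteq\mcal_\chi(\rho)$, which already yields $\rho(a)R=\chi(a)R=R\pi(a)$ without knowing anything about $\rho$ off the range of $R$. This localization is a genuine (if modest) simplification, and your alternative purely algebraic route --- that $\ker\chi$ is the closed ideal generated by $g_0$ because its hull is $\{\chi\}$, so $\rho(b)R=0$ for $b\in\ker\chi$ by commutativity --- is also valid and avoids spectral measures altogether; what the paper's minimality step buys instead is the stronger intermediate conclusion $\rho=\chi I_{\kk}$ on the minimal dilation space, which is not needed for the statement.
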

   \begin{proof}
In view of Theorem~\ref{ogylu} and
Proposition~\ref{gfza}, it suffices to prove that if
$\pi\colon \ascr \to \ogr(\hh)$ and $\rho\colon \ascr
\to \ogr(\kk)$ are representations on separable
Hilbert spaces, and $R\in \ogr(\hh,\kk)$ is a
contraction such that $\pi(a)=\chi(a) I$ for all $a\in
\ascr$, and
   \begin{align} \label{aswq}
\pi(g) = R^*\rho(g)R, \quad g\in G,
   \end{align}
then $R\pi(a)=\rho(a)R$ for all $a\in \ascr$. First,
observe that there is no loss of generality in
assuming that $\kk$ is a minimal $R$-dilation of
$\pi$, that is, $\kk=\bigvee_{a\in \ascr}
\rho(a)R(\hh)$. Indeed, otherwise, by
Lemma~\ref{deqa}, the subspace
$\kk_{\mathrm{m}}:=\bigvee_{a\in \ascr} \rho(a)R(\hh)$
reduces $\rho$ to the representation
$\rho_{\mathrm{m}}$. Since $R(\hh)\subseteq
\kk_{\mathrm{m}}$, there exists a unique operator
$R_{\mathrm{m}}\in \ogr(\hh,\kk_{\mathrm{m}})$ such
that $Rh=R_{\mathrm{m}}h$ for all $h\in \hh$. Then, we
have
   \begin{gather} \label{qqs}
R \pi(a) h= R_{\mathrm{m}} \pi(a) h \text{ and }
\rho(a) R h = \rho_{\mathrm{m}}(a) R_{\mathrm{m}} h
\text{ for all } h \in \hh \text{ and }a\in \ascr,
   \\  \notag
R^*\rho(a)R = R_{\mathrm{m}}^* \rho_{\mathrm{m}}(a)
R_{\mathrm{m}} \text{ for all } a\in \ascr.
   \end{gather}
Hence, by \eqref{aswq}, we obtain $\pi(g) =
R_{\mathrm{m}}^*\rho_{\mathrm{m}}(g)R_{\mathrm{m}}$
for all $g\in G$. Assuming that (ii) holds in the case
of minimal $R$-dilations, we conclude that
$R_{\mathrm{m}}\pi(a)=\rho_{\mathrm{m}}(a)R_{\mathrm{m}}$
for all $a\in \ascr$. It then follows from \eqref{qqs}
that $R\pi(a)=\rho(a)R$ for all $a\in \ascr$.

So assume that $\kk_{\mathrm{m}}=\kk$. Then, by
assumption and \eqref{aswq}, we have
   \begin{align*}
0 = \chi(g_0) I = \pi(g_0)= R^* \rho(g_0) R.
   \end{align*}
Since by the Gelfand-Krein theorem $g_0 \Ge 0$, it
follows that $\rho(g_0) \Ge 0$. This yields
   \begin{align*}
0=\|\sqrt{\rho(g_0)}Rh\|^2 = \is{R^* \rho(g_0)R h}{h}
= 0, \quad h \in \hh,
   \end{align*}
which implies that $\rho(g_0)R(\hh)=\{0\}$. As a
consequence of the commutativity of $\ascr$ and the
multiplicativity of $\rho$, we obtain
   \begin{align*}
\rho(g_0)\big(\rho(a)Rh\big) =
\rho(a)\big(\rho(g_0)Rh\big) = 0, \quad h\in \hh, \, a
\in \ascr.
   \end{align*}
Since $\kk_{\mathrm{m}}=\kk$, we conclude that
$\rho(g_0)=0$. By the spectral theorem for
representations (see \cite[Theorem~12.22]{Rud73}, see
also \cite[Theorem~A.3]{P-S-24}), the following~holds:
   \begin{align*}
0 = \rho(g_0) = \int_{\mathfrak{M}_{\ascr}} \hat{g}_0
\, \D E_{\rho},
   \end{align*}
where $\hat{g}_0$ denotes the Gelfand transform of
$g_0$ (see Section~\ref{Sec.7}), and $E_{\rho}$ is the
spectral measure of $\rho$. Since $\hat{g}_0 \Ge 0$,
the above implies
   \begin{align*}
0=E_{\rho}(\hat{g}_0^{-1}((0,\infty))) =
E_{\rho}(\mathfrak{M}_{\ascr} \setminus \{\chi\}).
   \end{align*}
In other words, $E_{\rho}(\varDelta)=
\delta_{\chi}(\varDelta) I_{\kk}$ for all $\varDelta
\in \borel(\mathfrak{M}_{\ascr})$. It now follows that
   \begin{align*}
\rho(a) R = \int_{\mathfrak{M}_{\ascr}} \hat{a} \, \D
E_{\rho} R = \chi(a) R = R \pi(a), \quad a \in \ascr.
   \end{align*}
In conclusion, condition (ii) of
Proposition~\ref{gfza} is satisfied.
   \end{proof}
   \section{\label{Sect.8}Proofs of Theorems~\ref{rconrep}
and~\ref{rroiu}, and Corollary~\ref{uspc}.}
   Apart from the above-mentioned proofs, this section
also provides two consequences of
Theorem~\ref{rconrep}, namely Corollaries~\ref{tiply}
and~\ref{hpse}. Let $\mcal_{\chi}(\pi)$ be as in
\eqref{mycj}. Using the decomposition
$\ascr=\ker\chi+\mathbb{C}\cdot e$ (where $e$ is the
unit of $\ascr$), we obtain
   \allowdisplaybreaks
   \begin{align*}
\mcal_{\chi}(\pi) & = \bigcap_{a \in
\ker\chi} \nul(\pi(a) - \chi(a)I),
   \\
\mcal_{\chi}(\pi)^\perp & = \bigvee_{a
\in \ascr} \overline{\ran(\pi(a) -
\chi(a)I)}=\bigvee_{a \in \ker\chi}
\overline{\ran(\pi(a) - \chi(a)I)}.
   \end{align*}
   \begin{lemma} \label{mcxi}
Let $\ascr$ be a unital $C^*$-algebra,
$\pi\colon \mathscr{A} \to \ogr(\hh)$
be a representation on a Hilbert space
$\hh$, and $\chi$ be a character of
$\mathscr{A}$. Denote by $P_{\chi}$ the
orthogonal projection of $\hh$ onto
$\mcal_{\chi}(\pi)$. Then
   \begin{enumerate}
    \item[(i)] $\mcal_{\chi}(\pi)$ is the largest
closed subspace reducing $\pi$ such that
   \begin{align*}
\pi(a)|_{\mcal_{\chi}(\pi)} =
\chi(a)I_{\mcal_{\chi}(\pi)}, \quad a
\in \ascr,
   \end{align*}
    \item[(ii)] if $A\in \ogr(\hh)$ commutes with
$\pi$, then $\mcal_{\chi}(\pi)$ reduces
$A$,
    \item[(iii)] $P_{\chi} \in W^*(\pi)$, where
$W^*(\pi)$ stands for the von Neumann algebra
generated by $\{\pi(a)\colon a \in \ascr\}$.
   \end{enumerate}
   \end{lemma}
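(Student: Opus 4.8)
The idea is to read everything off the definition $\mcal_{\chi}(\pi)=\bigcap_{a\in\ascr}\nul(\pi(a)-\chi(a)I)$: prove (i) directly, obtain (ii) by a one-line commutation argument, and deduce (iii) from (ii) via the bicommutant theorem.

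For (i): first I would observe that $\mcal:=\mcal_{\chi}(\pi)$ is closed, being an intersection of kernels of bounded operators, and that $\pi(a)|_{\mcal}=\chi(a)I_{\mcal}$ for all $a\in\ascr$ holds by the very definition of $\mcal$. To see that $\mcal$ reduces $\pi$ (and not merely that it is invariant), note that for $h\in\mcal$ and $b\in\ascr$ one has $\pi(b)h=\chi(b)h\in\mcal$ and, since $\pi$ preserves involutions, also $\pi(b^*)h=\overline{\chi(b)}\,h\in\mcal$; thus $\mcal$ is invariant under the $*$-closed set $\pi(\ascr)$, hence reduces $\pi$. Maximality is immediate: if $\mcal'$ is any closed subspace reducing $\pi$ with $\pi(a)|_{\mcal'}=\chi(a)I_{\mcal'}$, then $\mcal'\subseteq\nul(\pi(a)-\chi(a)I)$ for every $a\in\ascr$, so $\mcal'\subseteq\mcal$.

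For (ii): suppose $A\in\ogr(\hh)$ commutes with $\pi$, i.e.\ $A\pi(a)=\pi(a)A$ for all $a\in\ascr$. Taking adjoints and using $\pi(a)^*=\pi(a^*)$ shows that $A^*$ commutes with $\pi$ as well. For $h\in\mcal$ one computes $\pi(a)(Ah)=A\pi(a)h=\chi(a)Ah$, so $Ah\in\mcal$, and the same computation with $A^*$ in place of $A$ gives $A^*h\in\mcal$; hence $\mcal$ reduces $A$. For (iii): by the bicommutant theorem $W^*(\pi)=\pi(\ascr)''$, and $\pi(\ascr)'$ is precisely the set of bounded operators commuting with $\pi$. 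By (ii), every $A\in\pi(\ascr)'$ has $\mcal$ as a reducing subspace, which is equivalent to $AP_{\chi}=P_{\chi}A$; therefore $P_{\chi}$ commutes with every element of $\pi(\ascr)'$, i.e.\ $P_{\chi}\in\pi(\ascr)''=W^*(\pi)$.

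The whole argument is elementary. The only point meriting a little care is distinguishing \emph{invariant} from \emph{reducing}: in (i) this is handled by invoking the $*$-closedness of $\pi(\ascr)$, and in (ii) by checking that $A^*$, like $A$, lies in the commutant of $\pi$. I do not anticipate any genuine obstacle.
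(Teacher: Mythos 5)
Your proposal is correct and follows essentially the same route as the paper: direct verification of (i) from the definition (the paper checks that $\pi(a)f$ again satisfies the eigenvector condition, while you note $\pi(b)h=\chi(b)h\in\mcal_{\chi}(\pi)$ directly — an immaterial difference), the same commutation argument with $A$ and $A^*$ for (ii), and the same appeal to the double commutant theorem for (iii).
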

   \begin{proof}
(i) Let $f \in \mcal_{\chi}(\pi)$.
Then, for every $a \in \ascr$, we have
    \begin{align*}
\pi(b)(\pi(a)f) = \pi(ba)f = \chi(ba)f =
\chi(b)(\chi(a)f) = \chi(b)(\pi(a)f), \quad b\in
\ascr.
    \end{align*}
Hence, $\pi(a)\mcal_{\chi}(\pi)
\subseteq \mcal_{\chi}(\pi)$ for all
$a\in \ascr$, which implies that
$\mcal_{\chi}(\pi)$ reduces $\pi$. In
turn, if $\mcal$ is a closed subspace
of $\hh$ reducing $\pi$ and
$\pi(a)|_{\mcal}=\chi(a)I_{\mcal}$ for
all $a \in \ascr$, then clearly $\mcal
\subseteq \mcal_{\chi}(\pi)$.

(ii) Let $f \in \mcal_{\chi}(\pi)$.
Then
    \begin{align*}
\pi(a)(A f) = A\pi(a)f = \chi(a) A f, \quad a \in
\ascr.
    \end{align*}
    Hence, $A \mcal_{\chi}(\pi)
\subseteq \mcal_{\chi}(\pi)$. Since
$A^*$ also commutes with $\pi$, we
deduce that $A^* \mcal_{\chi}(\pi)
\subseteq \mcal_{\chi}(\pi)$.
Therefore, $\mcal_{\chi}(\pi)$ reduces
$A$.

(iii) According to (ii), $P_{\chi} \in
\pi(\ascr)^{\prime\prime}$, where
$\pi(\ascr)^{\prime\prime}$ denotes the bicommutant of
$\pi(\ascr)$. Therefore, by the von Neumann double
commutant theorem (see \cite[Theorem~4.1.5]{Mur90}),
we have $P_{\chi} \in W^*(\pi)$.
   \end{proof}
The proof of the next lemma follows directly from
Definition~\ref{dsfim}.
   \begin{lemma} \label{2ryzl}
Let $\ascr$ be a unital $C^*$-algebra and $\chi$ be a
character of $\ascr$. Let $\hh$ and $\kk$ be Hilbert
spaces, $\pi\colon \ascr \to \ogr(\hh)$ and
$\rho\colon \ascr \to \ogr(\mathcal{K})$ be
representations and $R \in \ogr(\mathcal{H},
\mathcal{K})$ be a contraction. Suppose that the
triplet $(\pi, \rho, R)$ has an isometric
$\chi$-decomposition relative to the quadruple
$(\hh_0, \hh_1, \kk_0, \kk_1$). Then
   \begin{enumerate}
   \item[(i)] $\pi(a)  = R_0^*\rho(a)|_{\kk_0}
R_0 \oplus \chi(a) I_{\hh_1} = R^*\rho(a) R + \chi(a)
\triangle_R$ for all $a \in \ascr$,
   \item[(ii)] $\pi(a)|_{\hh_1}  = R_1^*\rho(a)|_{\kk_1}
R_1 + \chi(a) \triangle_{R_1} = \chi(a) I_{\hh_1}$ for
all $a \in \ascr$,
   \item[(iii)] $\rho(a)  = \rho(a)|_{\kk_0}
\oplus \chi(a) I_{\kk_1}$ for all $a \in \ascr$.
   \end{enumerate}
   \end{lemma}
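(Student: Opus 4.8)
The plan is to unwind the defining conditions (i)--(v) of Definition~\ref{dsfim} and carry out the block computations dictated by the orthogonal decompositions $\hh = \hh_0 \oplus \hh_1$ and $\kk = \kk_0 \oplus \kk_1$. Set $\triangle_{R_1} := I_{\hh_1} - R_1^*R_1$ and recall from condition (iv) that $R(h_0 \oplus h_1) = R_0 h_0 \oplus R_1 h_1$, so that $R^*$ decomposes as $R_0^* \oplus R_1^*$ relative to these sums. Since $\kk_0$ and $\kk_1$ reduce $\rho$ (condition (ii)), we have $\rho(a) = \rho(a)|_{\kk_0} \oplus \rho(a)|_{\kk_1}$ for every $a \in \ascr$, and condition (iii) identifies $\rho(a)|_{\kk_1} = \chi(a) I_{\kk_1}$; this is precisely assertion (iii) of the lemma.

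Next I would compute the blocks of $R^*\rho(a)R$ and of $\triangle_R$. Using the decomposition of $\rho(a)$ just obtained together with the block form of $R$, one gets $R^*\rho(a)R = R_0^*\rho(a)|_{\kk_0}R_0 \oplus R_1^*\rho(a)|_{\kk_1}R_1$ and $R^*R = R_0^*R_0 \oplus R_1^*R_1$; since $R_0$ is isometric (condition (iv)), $R_0^*R_0 = I_{\hh_0}$, hence $\triangle_R = 0 \oplus \triangle_{R_1}$. On the $\hh_1$-summand, the identity $\rho(a)|_{\kk_1} = \chi(a) I_{\kk_1}$ yields
\begin{align*}
R_1^*\rho(a)|_{\kk_1}R_1 + \chi(a)\triangle_{R_1}
&= \chi(a)R_1^*R_1 + \chi(a)(I_{\hh_1} - R_1^*R_1) \\
&= \chi(a)I_{\hh_1},
\end{align*}
which, combined with $\pi(a)|_{\hh_1} = \chi(a)I_{\hh_1}$ (condition (iii)), is exactly assertion (ii). Collecting the block formulas above,
\[
R^*\rho(a)R + \chi(a)\triangle_R = R_0^*\rho(a)|_{\kk_0}R_0 \oplus \chi(a)I_{\hh_1},
\]
and since $\hh_0$ and $\hh_1$ reduce $\pi$ while conditions (v) and (iii) give $\pi(a)|_{\hh_0} = R_0^*\rho(a)|_{\kk_0}R_0$ and $\pi(a)|_{\hh_1} = \chi(a)I_{\hh_1}$, we conclude that $\pi(a) = R_0^*\rho(a)|_{\kk_0}R_0 \oplus \chi(a)I_{\hh_1} = R^*\rho(a)R + \chi(a)\triangle_R$, which is assertion (i).

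The argument is entirely routine; the only point requiring care is the bookkeeping of the orthogonal sums --- keeping straight which identity operator ($I_{\hh_0}$, $I_{\hh_1}$, or $I_{\kk_1}$) sits in which slot, and invoking the isometry of $R_0$ at exactly the moment where the $\hh_0$-block of $\triangle_R$ must vanish. I do not foresee any genuine obstacle.
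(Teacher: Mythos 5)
Your proof is correct and is exactly the verification the paper has in mind: the paper omits the argument, saying only that the lemma ``follows directly from Definition~\ref{dsfim}'', and your block computation (using $R=R_0\oplus R_1$, the reducing subspaces, $R_0^*R_0=I_{\hh_0}$, hence $\triangle_R=0\oplus\triangle_{R_1}$, together with conditions (iii) and (v) of the definition) is precisely that direct unwinding.
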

   \begin{proof}[Proof of Theorem~\ref{rroiu}]
(ii) Immediate, because $\rho$ is a representation.

(i)\&(iii) Since, by Lemma~\ref{mcxi},
$\mcal_{\chi}(\pi)$ reduces $\pi$, and $\pi(e)=I$, we
deduce from Theorem~\ref{soorh}($\alpha$) that
   \begin{align*}
\lcal_{\chi} = \bigvee_{a \in \ascr}
\rho(a) R (\mcal_{\chi}(\pi)) =
\bigvee_{a \in \ascr} R
\pi(a)(\mcal_{\chi}(\pi)) =
\overline{R(\mcal_{\chi}(\pi))}.
   \end{align*}
Similarly, we show that $\lcal_{\chi_{\perp}} =
\overline{R(\mcal_{\chi}(\pi)^{\perp})}$.

Now take $f\in \mcal_{\chi}(\pi)$. Then it follows
that
   \begin{align*}
0=\chi(a) f = \pi(a) f \overset{\eqref{xzaw}}=
R^*\rho(a)R f + \chi(a) \triangle_{R} f = R^*\rho(a)R
f, \quad a\in \ker\chi.
   \end{align*}
Since $a^*a\in \ker\chi$ for every $a \in \ker\chi$,
we deduce that
   \begin{align*}
\|\rho(a)Rf\|^2 = \is{R^*\rho(a^*a)Rf}{f}=0, \quad a
\in \ker\chi.
   \end{align*}
This implies that
   \begin{align} \label{rery}
\rho(a)R|_{\mcal_{\chi}(\pi)} = 0, \quad a \in
\ker\chi.
   \end{align}

Next, we show that $\lcal_{\chi} \perp
\lcal_{\chi_{\perp}}$. Let $a,b \in \ker\chi$ and
$\alpha, \beta \in \cbb$. By Theorem~\ref{soorh-2}(i),
$R^*R$ commutes with $\pi$, so by
Lemma~\ref{mcxi}(ii), $\mcal_{\chi}(\pi)$ reduces
$R^*R$, i.e., $R^*R f \in \mcal_{\chi}(\pi)$ for every
$f\in \mcal_{\chi}(\pi)$. This yields (i) and
   \allowdisplaybreaks
   \begin{align*}
\is{\rho(a+\alpha e)R f}{\rho(b+\beta e)R g} &
\overset{\eqref{rery}} = \is{\alpha R f}{\rho(b)Rg
+\beta R g}
   \\
& \hspace{1ex}= \alpha \is{\rho(b^*)R f}{Rg} + \alpha
\bar \beta \is{R^*R f}{g}
   \\
& \overset{\eqref{rery}} = \alpha \bar \beta \is{R^*R
f}{g} =0, \quad f \in \mcal_{\chi}(\pi), \, g \in
\mcal_{\chi}(\pi)^{\perp}.
   \end{align*}
Since $\ascr=\ker\chi+\comp \cdot e$, we deduce that
$\lcal_{\chi} \perp \lcal_{\chi_{\perp}}$, and
consequently $\lcal_{\chi_\perp} \subseteq
\lcal_{\chi}^\perp$. This proves (iii).

(iv) By Theorems~\ref{delt} and~\ref{soorh}, together
with Lemma~\ref{mcxi}(i), we have
   \begin{align*}
\rho(a) \big(\rho(b) R f \big) & = \rho(ab) R f =
R\pi(ab)f = R \pi(a) \pi(b) f
   \\
& = \chi(a) R \pi(b) f = \chi(a) \big(\rho(b) Rf),
\quad f \in \mcal_{\chi}(\pi), \, a,b \in \ascr.
   \end{align*}
This implies that $\lcal_{\chi} \subseteq
\mcal_{\chi}(\rho)$, which, together with (iii), gives
the first inclusion in (iv). Since, by
Theorem~\ref{soorh-2},
   \begin{align*}
\pi (a) = \pi(a)|_{\nul(\triangle_{R})} \oplus \chi(a)
I_{\overline{\ran(\triangle_{R})}}, \quad a \in \ascr,
   \end{align*}
we infer from Lemma~\ref{mcxi}(i) that
$\ran(\triangle_{R}) \subseteq
\mcal_{\chi}(\pi)$. This proves (iv).

(v) That the operators $R_0$ and $R_1$
are well-defined, and that $R = R_0
\oplus R_1$, follows from (iii). By
(iv), we have
$\mcal_{\chi}(\pi)^{\perp} \subseteq
\nul(\triangle_{R})$, which implies
that $R_0$ is an isometry. By
Lemma~\ref{mcxi}, $\mcal_{\chi}(\pi)$
reduces $\pi$ and
$\pi(a)|_{\mcal_{\chi}(\pi)} =
\chi(a)I_{\mcal_{\chi}(\pi)}$ for every
$a \in \ascr$. According to (iii) and
\eqref{rery}, we have
$\rho(a)|_{\lcal_{\chi}}=0$ for every
$a \in \ker\chi$. Since
$\ascr=\ker\chi+\comp \cdot e$, it
follows that
$\rho(a)|_{\lcal_{\chi}}=\chi(a)
I_{\lcal_{\chi}}$ for every $a \in
\ascr$. Summarizing, conditions
(i)-(iv) of Definition~\ref{dsfim} are
satisfied. That condition (v) of
Definition~\ref{dsfim} is also
satisfied can be deduced from the
following identities:
   \begin{align*}
\pi(a) & \overset{\eqref{xzaw}}= R^*\rho(a)R + \chi(a)
\triangle_{R}
   \\
& = \left(R_0^*\rho(a)|_{\kk_0} R_0 + \chi(a)
\triangle_{R_0}\right) \oplus \left(R_1^*
\rho(a)|_{\kk_1} R_1 + \chi(a) \triangle_{R_1}\right)
   \\
& = R_0^*\rho(a)|_{\kk_0} R_0 \oplus \chi(a)
I_{\hh_1}, \quad a \in \ascr.
   \end{align*}

(vi) This follows from (v) and Theorems~ \ref{delt}
and~\ref{soorh}($\alpha$).

To prove the ``moreover'' part, suppose
that $\kk = \bigvee_{a \in \ascr}
\rho(a)R(\hh)$. Let $g \in \kk$ be such
that $g \perp \lcal_{\chi} \oplus
\lcal_{\chi_{\perp}}$. Then, we have
   \begin{align*}
\is{g}{\rho(a) R (h\oplus h')} = \is{g}{\rho(a) R h} +
\is{g}{\rho(a) R h'} = 0,
   \end{align*}
for all $h\in \mcal_{\chi}(\pi)$,
$h'\in \mcal_{\chi}(\pi)^{\perp}$ and
$a \in \ascr$. This implies that $g
\perp \kk$, and hence $g=0$.
Consequently, $\lcal_{\chi} \oplus
\lcal_{\chi_{\perp}} = \kk$. By (v),
$R_0$ is an isometry. In turn, by
(iii), we have
$\lcal_{\chi}^{\perp}=\lcal_{\chi_{\perp}}=
\overline{\ran(R_0)}$, which shows that
$R_0$ is unitary.
   \end{proof}
   \begin{proof}[Proof of Theorem~\ref{rconrep}]
   (i)$\Rightarrow$(ii)
If $R$ is isometric, then by Theorem~\ref{delt} the
implication \eqref{plyc} holds. If $R$ is not
isometric, then \eqref{xzaw} yields \eqref{plyc} as
well.

   (ii)$\Rightarrow$(i)
Assume $\pi\colon \ascr \to \ogr(\hh)$ is a
representation on a separable Hilbert space $\hh$, and
$\varPhi\colon \ascr \to \ogr(\hh)$ is a UCP map such
that $\pi(a)=\varPhi(a)$ for all $a\in G$. By the
Stinespring dilation theorem,
$\varPhi(a)=P\rho(a)|_{\hh}$ for all $a \in \ascr$,
where $\rho\colon \ascr \to \ogr(\kk)$ is a
representation on a Hilbert space $\kk$, $\hh
\subseteq \kk$ and $P\in \ogr(\kk)$ is the orthogonal
projection of $\kk$ onto $\hh$. We can assume that
$\kk$ is separable (see the proof of
Theorem~\ref{ogylu}). Applying (ii) to the triplet
$(\pi,\rho,R)$ with $Rh=h$ for $h\in \hh$, we~get
   \begin{align*}
\pi(a) = R^*\rho(a) R + \chi(a) \triangle_R =
P\rho(a)|_{\hh}= \varPhi(a), \quad a \in \ascr.
   \end{align*}
Hence, $\pi|_{G}$ has the unique extension property,
and consequently $G$ is hyperrigid.

(i)$\Rightarrow$(iv)\&(i)$\Rightarrow$(v)
Both implications are consequences of
Theorem~\ref{rroiu}.

(v)$\Rightarrow$(ii)
Let $(\pi, \rho, R)$ be as in (ii), and suppose that
$\pi(g) = R^* \rho(g) R$ for all $g \in G$. In view of
Lemma~\ref{deqa}, the triplet $(\pi,
\rho_{\mathrm{m}}, R_{\mathrm{m}})$ satisfies the
minimality condition and the if-clause of the
implication in (v) holds. Hence, $(\pi,
\rho_{\mathrm{m}}, R_{\mathrm{m}})$ has a unitary
$\chi$-decomposition. Since
$\triangle_R=\triangle_{R_{\mathrm{m}}}$ and, by
Lemma~\ref{deqa}, $R^*\rho(a) R =
R_{\mathrm{m}}^*\rho_{\mathrm{m}}(a) R_{\mathrm{m}}$
for all $a\in \ascr$, we deduce from
Lemma~\ref{2ryzl}(i) that the then-clause of (ii)
holds.

(iv)$\Rightarrow$(ii)
Argue as in the proof of (v)$\Rightarrow$(ii).

(ii)$\Rightarrow$(iii) Obvious.

(iii)$\Rightarrow$(ii) Apply the identity
$\ascr=\ker\chi+\comp \cdot e$.
   \end{proof}
   \begin{corollary} \label{tiply}
Let $G$ be a hyperrigid subset of a unital
$C^*$-algebra $\ascr$. Let $\pi\colon \ascr \to
\ogr(\hh)$ and $\rho\colon \ascr \to \ogr(\kk)$ be
representations on separable Hilbert spaces $\hh$ and
$\kk$ respectively, and $R\in \ogr(\hh,\kk)$ be a
non-isometric contraction such that $\pi(g) =
R^*\rho(g)R$ for every $g\in G$. Then the following
conditions are equivalent{\em :}
   \begin{enumerate}
   \item[(i)] $G$ is rigid at $0$,
   \item[(ii)] there is no character of $\ascr$ vanishing on $G$.
   \end{enumerate}
Moreover, there exists at most one state $\phi$ on
$\ascr$ such that $G \subseteq \ker \phi$, and if such
a state exists, it is a character of $\ascr$.
   \end{corollary}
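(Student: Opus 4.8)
The plan is to read the corollary off from the dichotomy in Theorem~\ref{soorh} together with Salomon's characterization of rigidity at $0$. Since $G$ is a countable set of generators of $\ascr$, the algebra $\ascr$ is separable, so \cite[Theorem~3.3]{Sal19} applies and shows that (i) is equivalent to the statement that no state on $\ascr$ annihilates $G$. Hence it suffices to prove that a state of $\ascr$ vanishes on $G$ if and only if a \emph{character} of $\ascr$ vanishes on $G$; taking negations then yields (i)$\Leftrightarrow$(ii).

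One implication is immediate, since every character is a state. For the converse I would argue contrapositively: suppose some state $\phi$ on $\ascr$ satisfies $G\subseteq\ker\phi$. Then alternative ($\beta$) of Theorem~\ref{soorh}, which asserts that $G\nsubseteq\ker\psi$ for \emph{every} state $\psi$, is false, so alternative ($\alpha$) must hold for the given triple $(\pi,\rho,R)$; this is exactly where the hypotheses that $R$ is a non-isometric contraction and $\pi(g)=R^*\rho(g)R$ for all $g\in G$ enter. The ``moreover'' part of Theorem~\ref{soorh} then tells us that any state whose kernel contains $G$, in particular $\phi$, is a character of $\ascr$ with kernel equal to the set $J$ of \eqref{zbyr}. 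Thus a character of $\ascr$ vanishes on $G$, which is what we needed.

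For the ``moreover'' assertion of the corollary, assume a state $\phi$ with $G\subseteq\ker\phi$ exists. As just noted, we are then in case ($\alpha$), so $\phi$ is a character and $\ker\phi=J$. If $\psi$ is any state with $G\subseteq\ker\psi$, the same reasoning forces $\ker\psi=J=\ker\phi$; since $\ker\phi$ has codimension one in $\ascr$ and $\phi(e)=\psi(e)=1$, writing an arbitrary $a\in\ascr$ as $a=b+\lambda e$ with $b\in\ker\phi$ and $\lambda\in\cbb$ gives $\psi(a)=\lambda=\phi(a)$, so $\psi=\phi$. Hence at most one such state exists, and if it exists it is a character.

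No estimates or constructions are involved; the only point needing care is the observation that \emph{every} state annihilating $G$ has the \emph{same} kernel, namely $J$. This is precisely what the ``moreover'' clause of Theorem~\ref{soorh} supplies, and it is what both upgrades ``a state vanishes on $G$'' to ``a character vanishes on $G$'' and delivers the uniqueness in the ``moreover'' part --- so the substantive work has already been carried out in the proof of Theorem~\ref{soorh}.
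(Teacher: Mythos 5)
Your argument is correct and takes essentially the paper's route: both rest on Salomon's characterization of rigidity at zero for the (automatically separable) algebra $\ascr$ and on the ``moreover'' clause of the dichotomy, which upgrades any state annihilating $G$ to a character with kernel $J$ (the paper channels this through Theorem~\ref{ogylu}, whose moreover part is itself obtained from Theorem~\ref{soorh}, which you cite directly). The only cosmetic difference is the uniqueness step: the paper observes that two characters vanishing on the generating set $G$ must coincide, whereas you identify both kernels with $J$ and use the codimension-one decomposition $\ascr=J\dotplus\cbb e$ --- both are immediate and equally valid.
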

   \begin{proof}
In view of Theorem~\ref{ogylu}, only the ``moreover''
part requires the proof. Since $R$ is not an isometry,
we see that $e\notin G$, where $e$ is the unit of
$\ascr$. Suppose that $\phi_1$ and $\phi_2$ are states
on $\ascr$ vanishing on $G$. From Theorem~\ref{ogylu},
it follows that $\phi_1$ and $\phi_2$ are characters
of $\ascr$ that coincide on the set $G$. Since $G$
generates the $C^*$-algebra $\ascr$, we conclude that
$\phi_1=\phi_2$.
   \end{proof}
   \begin{corollary} \label{hpse}
Let $G$ be a hyperrigid subset of a unital
$C^*$-algebra $\ascr$. Then the following conditions
are equivalent{\em :}
   \begin{enumerate}
   \item[(i)] there exists a character $\chi$ of $\ascr$
such that $G \subseteq \ker \chi$,
   \item[(ii)] there exist representations
$\pi\colon \ascr \to \ogr(\hh)$ and $\rho\colon \ascr
\to \ogr(\kk)$ on separable Hilbert spaces $\hh$ and
$\kk$, a non-isometric contraction $R\in
\ogr(\hh,\kk)$ and a state $\phi$ on $\ascr$ such that
$G \subseteq \ker \phi$ and $\pi(g) = R^*\rho(g)R$ for
all $g\in G$,
   \item[(iii)] there exist representations
$\pi\colon \ascr \to \ogr(\hh)$ and $\rho\colon \ascr
\to \ogr(\kk)$ on separable Hilbert spaces $\hh$ and
$\kk$, a pure contraction $R\in \ogr(\hh,\kk)$ with
dense range and a state $\phi$ on $\ascr$ such that $G
\subseteq \ker \phi$ and $\pi(g) = R^*\rho(g)R$ for
all $g\in G$.
   \end{enumerate}
Moreover, if there exists a character $\chi$ of
$\ascr$ vanishing on $G$, then any state on $\ascr$
vanishing on $G$ coincides with $\chi$.
   \end{corollary}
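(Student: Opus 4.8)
The plan is to establish the cyclic chain (ii)$\Rightarrow$(i)$\Rightarrow$(iii)$\Rightarrow$(ii), and then to deduce the ``moreover'' part from the uniqueness assertion in Corollary~\ref{tiply}. For (ii)$\Rightarrow$(i) I would take $(\pi,\rho,R)$ and the state $\phi$ from (ii) and observe that the quadruple $(G,\pi,\rho,R)$ meets all the hypotheses of Theorem~\ref{soorh}: $G$ is hyperrigid, $\hh$ and $\kk$ are separable, and $R$ is a non-isometric contraction with $\pi(g)=R^*\rho(g)R$ for every $g\in G$. Since $\phi$ is a state with $G\subseteq\ker\phi$, we are in alternative $(\alpha)$ of the dichotomy, so the ``moreover'' part of Theorem~\ref{soorh} forces $\phi$ to be a character of $\ascr$; taking $\chi:=\phi$ yields (i). The implication (iii)$\Rightarrow$(ii) I would dispatch immediately, since a pure contraction with dense range is in particular a non-isometric contraction.

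For (i)$\Rightarrow$(iii) it suffices to exhibit one admissible triple. I would take $\hh=\kk=\comp$, let $\pi=\rho\colon\ascr\to\ogr(\comp)$ be the character $\chi$ itself (viewed as a one-dimensional representation), put $R:=\tfrac12 I_{\comp}$, and set $\phi:=\chi$ (every character of a $C^*$-algebra is a state). Then $R$ is a pure contraction with dense range on the separable space $\comp$, and, since $G\subseteq\ker\chi$, for each $g\in G$ one has $\pi(g)=\chi(g)=0$ and $R^*\rho(g)R=\tfrac14\chi(g)=0$, so $\pi(g)=R^*\rho(g)R$; moreover $G\subseteq\ker\phi$. (More generally this is the special case $\hh_0=\kk_0=\{0\}$ of Proposition~\ref{ghkds}, applied with any separable $\hh_1,\kk_1$ carrying a pure contraction $R_1$ of dense range.) This establishes (iii) and closes the cycle, hence the equivalence of (i), (ii) and (iii).

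For the ``moreover'' part I would argue as follows: assuming (i), with $\chi$ a character of $\ascr$ such that $G\subseteq\ker\chi$, the equivalence (i)$\Leftrightarrow$(ii) just proved supplies representations $\pi,\rho$ on separable Hilbert spaces together with a non-isometric contraction $R$ satisfying $\pi(g)=R^*\rho(g)R$ for all $g\in G$, so the hypotheses of Corollary~\ref{tiply} are met. Its ``moreover'' clause guarantees that at most one state on $\ascr$ vanishes on $G$; since $\chi$ is such a state, it is the unique one, and therefore every state on $\ascr$ vanishing on $G$ equals $\chi$. I do not expect any real obstacle here: the corollary is a short consequence of Theorem~\ref{soorh} and Corollary~\ref{tiply}, and the only point needing a direct check is the elementary verification in (i)$\Rightarrow$(iii) that the chosen $R$ is a pure contraction with dense range on a separable space, which is trivial.
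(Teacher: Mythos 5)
Your proof is correct and takes essentially the same approach as the paper: a concrete scalar construction for (i)$\Rightarrow$(iii), the trivial observation that a pure contraction on a nonzero space is non-isometric for (iii)$\Rightarrow$(ii), and the ``moreover'' part of Corollary~\ref{tiply} combined with (i)$\Rightarrow$(ii) for the uniqueness clause. The only cosmetic difference is that you obtain (ii)$\Rightarrow$(i) directly from the ``moreover'' part of Theorem~\ref{soorh}, while the paper cites Theorem~\ref{ogylu}; these are interchangeable, since that part of Theorem~\ref{ogylu} is itself deduced from Theorem~\ref{soorh}.
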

   \begin{proof}
(i)$\Rightarrow$(iii) Take nonzero Hilbert spaces
$\hh$ and $\kk$. Define the representations $\pi\colon
\ascr \to \ogr(\hh)$ and $\rho\colon \ascr \to
\ogr(\kk)$ by $\pi(a)=\chi(a)I_{\hh}$ and
$\rho(a)=\chi(a)I_{\kk}$ for every $a\in \ascr$. Set
$\phi=\chi$. Take an arbitrary $R\in \ogr(\hh,\kk)$.
Then $\pi(g)=R^*\rho(g)R$ for every $g\in G$. If, in
addition, $\dim \hh \Ge \dim \kk$, then $R$ can always
be chosen to be a pure contraction with dense range.

(iii)$\Rightarrow$(ii) Obvious.

(ii)$\Rightarrow$(i) This implication is a direct
consequence of Theorem~\ref{ogylu}.

The ``moreover'' part follows from the ``moreover''
part of Corollary~\ref{tiply} and implication
(i)$\Rightarrow$(ii).
   \end{proof}
   \begin{rem} As observed in the proof of
implication (i)$\Rightarrow$(iii) of
Corollary~\ref{hpse}, if there exists a character
$\chi$ of $\ascr$ such that $G \subseteq \ker \chi$,
then for all Hilbert spaces $\hh$ and $\kk$ and all
operators $R\in \ogr(\hh,\kk)$, there exist
representations $\pi\colon \ascr \to \ogr(\hh)$ and
$\rho\colon \ascr \to \ogr(\kk)$ such that $\pi(g) =
R^*\rho(g)R$ for all $g\in G$. \hfill $\diamondsuit$
   \end{rem}
For an isometry $V\in \ogr(\hh,\kk)$, the {\em defect
space} $\dfi(V)$ of $V$ is defined by
   \begin{align*}
\dfi(V)=\ran(V)^{\perp} = \nul(V^*).
   \end{align*}
It is worth noting that the term
``defect space'' is also used in the
context of contractions to refer to the
closure of the range of the defect
operator (see \cite[Sec.\
I.3]{Sz-F-B-K10}). These notions
coincide only when the isometry is
unitary. Since the second
interpretation of defect space does not
occur in this paper, we adapt the first
one.
   \begin{lemma} \label{rdrs}
Let $R\in \ogr(\hh,\kk)$ be a contraction.
Then
   \begin{enumerate}
   \item[(i)]
$R(\nul(\triangle_R))^\perp = (R^*)^{-1}
(\overline{\ran(\triangle_R)})$,
   \item[(ii)] $\overline{\ran(R \triangle_R)}^{\perp} =
(R^*)^{-1} \left( \nul(\triangle_R) \right)$.
   \end{enumerate}
   \end{lemma}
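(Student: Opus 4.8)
The plan is to deduce both equalities from the elementary identity
\begin{align*}
[T(M)]^{\perp} = (T^*)^{-1}(M^{\perp}),
\end{align*}
valid for every bounded operator $T\in \ogr(\hh,\kk)$ and every subset $M\subseteq \hh$ (here $(T^*)^{-1}(\,\cdot\,)$ denotes the full preimage under $T^*$, not an inverse operator), together with the fact that $\triangle_R = I - R^*R$ is selfadjoint — indeed positive, since $R$ is a contraction — so that $\nul(\triangle_R)^{\perp} = \overline{\ran(\triangle_R)}$.

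For part (i), I would apply the identity above with $T = R$ and $M = \nul(\triangle_R)$. Spelled out: for $k\in \kk$ one has $k \perp R(\nul(\triangle_R))$ if and only if $\is{k}{Rh} = \is{R^*k}{h} = 0$ for every $h\in \nul(\triangle_R)$, that is, if and only if $R^*k \in \nul(\triangle_R)^{\perp} = \overline{\ran(\triangle_R)}$, which is precisely the assertion that $k \in (R^*)^{-1}(\overline{\ran(\triangle_R)})$.

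For part (ii), I would instead use $\overline{\ran(S)}^{\perp} = \nul(S^*)$ with $S = R\triangle_R$. Since $\triangle_R$ is selfadjoint, $S^* = \triangle_R R^*$, and hence
\begin{align*}
\overline{\ran(R\triangle_R)}^{\perp} = \nul(\triangle_R R^*) = \{k\in \kk : R^*k \in \nul(\triangle_R)\} = (R^*)^{-1}(\nul(\triangle_R)).
\end{align*}

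The argument is purely formal, and I do not expect any genuine obstacle; the work is bookkeeping. The only points meriting a moment's care are: (a) reading $R(\nul(\triangle_R))^{\perp}$ as $\big(R(\nul(\triangle_R))\big)^{\perp}$ rather than $R\big(\nul(\triangle_R)^{\perp}\big)$ (a quick check with $R$ a pure contraction confirms this is the intended parsing); and (b) noting that in both identities the set pulled back through $R^*$ — namely $\overline{\ran(\triangle_R)}$ in (i) and $\nul(\triangle_R)$ in (ii) — is already closed, so no further closure is needed on the right-hand side.
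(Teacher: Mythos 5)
Your proof is correct and is essentially the paper's own argument: both parts reduce to unwinding $\is{g}{R\triangle_R h}=\is{R^*g}{\triangle_R h}$ (resp.\ $\is{g}{Rh}=\is{R^*g}{h}$) and using that $\triangle_R$ is selfadjoint, so $\nul(\triangle_R)^{\perp}=\overline{\ran(\triangle_R)}$; your parsing of $R(\nul(\triangle_R))^{\perp}$ as $\bigl(R(\nul(\triangle_R))\bigr)^{\perp}$ matches the paper's reading.
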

   \begin{proof}
(i) Let $g \in \kk$. Then $g\in
R(\nul(\triangle_R))^\perp$ if and only if
   \begin{align*}
0 = \langle g, R h \rangle = \langle R^* g,
h \rangle, \quad h \in \nul(\triangle_R),
   \end{align*}
which is equivalent to $R^*g \in
\nul(\triangle_R)^{\perp} =
\overline{\ran(\triangle_R)}$, and thus to $g\in
(R^*)^{-1} (\overline{\ran(\triangle_R)})$.

(ii) Let $g \in \kk$. Then $g \in
\overline{\ran(R \triangle_R)}^{\perp}$ if
and only if
   \begin{align*}
0 = \langle g, R \triangle_R h\rangle =
\langle R^* g, \triangle_R h \rangle, \quad
h \in \hh,
   \end{align*}
which is equivalent to $R^* g\in
\overline{\ran(\triangle_R)}^{\perp}=
\nul(\triangle_R)$, and so to $g\in
(R^*)^{-1}(\nul(\triangle_R))$. This completes the
proof.
   \end{proof}
   \begin{proof}[Proof of Corollary~\ref{uspc}]
Assertions (i)-(vii) follow from
Theorems~\ref{soorh-2} and \ref{rroiu}, together with
Lem\-ma~\ref{mcxi}, using basic properties of
orthogonal projections.

(viii) By Lemma~\ref{rdrs}, we have
   \allowdisplaybreaks
   \begin{align*}
\dfi(R_{I}) & = \overline{\ran(R \triangle_R)}^{\perp}
\ominus R(\nul(\triangle_R))
   \\
& = (R^*)^{-1}(\nul(\Delta_R)) \cap
(R^*)^{-1}(\overline{\ran(\Delta_R)})
   \\
& = (R^*)^{-1}\big(\nul(\Delta_R) \cap
\overline{\ran(\Delta_R)}\big) = (R^*)^{-1}(\{0\}) =
\nul(R^*).
   \end{align*}
It remains to show that $\dfi(R_{I}) = \dfi(R_{S})$,
which can be done as follows using (ii), (v) and (vi):
   \allowdisplaybreaks
   \begin{align*}
\dfi(R_{I}) & = \kk_{I} \ominus
\ran(R_{I}) = \left[\kk_{U} \oplus
\kk_{S}\right] \ominus
\left[\ran(R_{U}) \oplus
\ran(R_{S})\right] = \{0\} \oplus
\dfi(R_{S}).
   \end{align*}
This completes the proof.
   \end{proof}
   \section{\label{Sec.7}Proofs of
Propositions~\ref{dtdw}, \ref{dttr}, \ref{kridut},
\ref{dwct} and \ref{axer}.}
   We begin by showing that the space
$\mcal_{\chi}(\pi)$, defined in \eqref{mycj}, plays,
in some sense, the role of a generalized joint
eigenspace when the underlying $C^*$-algebra is
commutative (see Lemma~\ref{pynsp}). This
correspondence becomes especially clear when the
representation $\pi$ is induced by a normal $d$-tuple
(see Lemma~\ref{versyn}).

It follows from the Gelfand-Naimark
theorem (see
\cite[Theorem~11.18]{Rud73}) and
\cite[Proposition~4.5]{Paul02} that if
$\ascr$ is a commutative unital
$C^*$-algebra and $\pi\colon \ascr \to
\ogr(\hh)$ is a representation on a
Hilbert space $\hh$, then there exists
a unique regular spectral measure
$E_{\pi}\colon
\borel(\mathfrak{M}_{\ascr}) \to
\ogr(\hh)$ such that
   \begin{align*}
\pi(a) = \int_{\mathfrak{M}_{\ascr}}
\hat{a} \, \D E_{\pi}, \quad a \in
\ascr,
   \end{align*}
where $\mathfrak{M}_{\ascr}$ denotes
the maximal ideal space of $\ascr$ and
$\hat{a}\colon \mathfrak{M}_{\ascr} \to
\cbb$ is the Gelfand transform of $a\in
\ascr$. The regularity of $E_{\pi}$
means that $\langle
E_{\pi}(\cdot)f,f\rangle$ is a regular
complex measure for every $f\in \hh$.
We refer to $E_{\pi}$ as the {\em
spectral measure} of the representation
$\pi$. It is well known that Borel
spectral measures on Polish spaces are
automatically regular (see
\cite[Theorem~II.3.2]{Par67}).

For commutative unital $C^*$-algebras,
the space $\mcal_{\chi}(\pi)$ can be
described in terms of the spectral
measure of $\pi$ as follows.
   \begin{lemma} \label{pynsp} Let $\ascr$ be a
commutative unital $C^*$-algebra and $\chi$
be a character of $\ascr$. Let $\pi\colon
\ascr \to B(\hh)$ be a representation of
$\ascr$ on a Hilbert space $\hh$ and
$E_{\pi}\colon \borel(\mathfrak{M}_{\ascr})
\to \ogr(\hh)$ be the spectral measure of
$\pi$. Then $\mcal_{\chi}(\pi) =
\ran(E_\pi(\{\chi\}))$.
   \end{lemma}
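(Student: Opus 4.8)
The plan is to prove the two inclusions of $\mcal_{\chi}(\pi) = \ran(E_{\pi}(\{\chi\}))$ separately, writing $P := E_{\pi}(\{\chi\})$ throughout and using repeatedly that $\chi(a) = \hat{a}(\chi)$ for every $a \in \ascr$ (i.e. that $\chi$ is evaluation at $\chi$ composed with the Gelfand transform).

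First I would verify the easy inclusion $\ran(P) \subseteq \mcal_{\chi}(\pi)$. Let $f \in \ran(P)$, so $Pf = f$. Since $P = E_{\pi}(\{\chi\})$ commutes with each $E_{\pi}(\varDelta)$, for every $g \in \hh$ we have $E_{\pi}(\varDelta)f = E_{\pi}(\varDelta\cap\{\chi\})f$, so the complex measure $\varDelta \mapsto \is{E_{\pi}(\varDelta)f}{g}$ is the point mass $\is{f}{g}\,\delta_{\chi}$. Integrating $\hat{a}$ against it gives $\is{\pi(a)f}{g} = \hat{a}(\chi)\is{f}{g} = \chi(a)\is{f}{g}$ for all $g$, hence $\pi(a)f = \chi(a)f$ for every $a \in \ascr$, that is, $f \in \bigcap_{a\in\ascr}\nul(\pi(a)-\chi(a)I) = \mcal_{\chi}(\pi)$.

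For the reverse inclusion $\mcal_{\chi}(\pi) \subseteq \ran(P)$ — the part I expect to be the main obstacle — let $f \in \mcal_{\chi}(\pi)$ and put $\mu_{f}(\varDelta) := \is{E_{\pi}(\varDelta)f}{f} = \|E_{\pi}(\varDelta)f\|^{2}$, a finite positive regular Borel measure on $\mathfrak{M}_{\ascr}$. For every $a \in \ker\chi$ one has $\pi(a)f = \chi(a)f = 0$, so $\int_{\mathfrak{M}_{\ascr}}|\hat{a}|^{2}\,\D\mu_{f} = \is{\pi(a^{*}a)f}{f} = \|\pi(a)f\|^{2} = 0$, whence $\hat{a} = 0$ $\mu_{f}$-a.e. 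By the Gelfand--Naimark theorem (see \cite[Theorem~11.18]{Rud73}) the Gelfand transform carries $\ker\chi$ onto $\{g \in C(\mathfrak{M}_{\ascr}) : g(\chi) = 0\}$, so every continuous function on $\mathfrak{M}_{\ascr}$ vanishing at $\chi$ vanishes $\mu_{f}$-a.e. Given an arbitrary compact $K \subseteq \mathfrak{M}_{\ascr}\setminus\{\chi\}$, Urysohn's lemma supplies $g \in C(\mathfrak{M}_{\ascr})$ with $g(\chi) = 0$ and $g \equiv 1$ on $K$; since $K \subseteq \{g \neq 0\}$, we get $\mu_{f}(K) = 0$. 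Then inner regularity of $\mu_{f}$ — exactly the property encoded in the regularity of $E_{\pi}$, and automatic when $\mathfrak{M}_{\ascr}$ is metrizable (cf. \cite[Theorem~II.3.2]{Par67}) — yields $\mu_{f}(\mathfrak{M}_{\ascr}\setminus\{\chi\}) = \sup\{\mu_{f}(K) : K \subseteq \mathfrak{M}_{\ascr}\setminus\{\chi\}\ \text{compact}\} = 0$. Hence $\|(I-P)f\|^{2} = \mu_{f}(\mathfrak{M}_{\ascr}\setminus\{\chi\}) = 0$, so $f = Pf \in \ran(P)$, finishing the argument.

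The only genuinely delicate step is the last passage from ``$\mu_{f}(K) = 0$ for every compact $K$ avoiding $\chi$'' to ``$\mu_{f}$ gives no mass to $\mathfrak{M}_{\ascr}\setminus\{\chi\}$'': this is where non-metrizability of the maximal ideal space could in principle cause trouble, and it is precisely why regularity is built into the definition of the spectral measure of a representation. Everything else is a routine use of the spectral calculus together with the identity $\chi(a) = \hat{a}(\chi)$.
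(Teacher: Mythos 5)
Your proposal is correct and follows essentially the same route as the paper: both arguments reduce the lemma to the computation $\|(\pi(a)-\chi(a)I)h\|^{2}=\int_{\mathfrak{M}_{\ascr}}|\hat a(\tau)-\chi(a)|^{2}\,\langle E_{\pi}(\D\tau)h,h\rangle$ and then show that the scalar measure $\langle E_{\pi}(\cdot)h,h\rangle$ of any $h\in\mcal_{\chi}(\pi)$ is concentrated on $\{\chi\}$. The only difference is presentational: the paper phrases the concentration through the closed support of the measure and the fact that $\bigcap_{a\in\ascr}\{\tau:\tau(a)=\chi(a)\}=\{\chi\}$, whereas you test against compact sets via Urysohn's lemma and invoke inner regularity explicitly, which is just a more detailed rendering of the same regularity point.
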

   \begin{proof}
If $h \in \mathcal{H}$, then
   \begin{align*}
h \in \mcal_\chi(\pi) & \iff \| (\pi(a) - \chi(a)I)h
\|^2 = 0 \quad \forall a \in \ascr
   \\
& \iff \int_{\mathfrak{M}_{\ascr}} |
\hat{a}(\tau) - \chi(a) |^2 \, \langle
E_\pi(\D \tau) h, h \rangle = 0 \quad
\forall a \in \ascr
   \\
& \iff \mathrm{supp} \langle E_\pi(\cdot)h,
h \rangle \subseteq \bigcap_{a \in \ascr}
\big\{\tau \in \mathfrak{M}_{\ascr}\colon
\tau(a) = \chi(a) \big\}
   \\
& \iff \mathrm{supp} \langle E_\pi(\cdot)h, h \rangle
\subseteq \{ \chi \}
   \\
& \iff \langle E_\pi(\mathfrak{M}_{\ascr} \setminus
\{\chi\})h, h \rangle = 0
   \\
& \iff (I - E_\pi(\{\chi\})) h = 0
   \\
& \iff h \in \ran(E_\pi(\{\chi\})),
   \end{align*}
which completes the proof.
   \end{proof}
According to Theorem~\ref{rroiu}(iv),
we have $R(\mcal_{\chi}(\pi)) \subseteq
\mcal_{\chi}(\rho)$. We now show that
if $\kk = \bigvee_{a \in \ascr}
\rho(a)R(\hh)$ and the $C^*$-algebra
$\ascr$ is commutative, then the
closure of $R(\mcal_{\chi}(\pi))$
coincides with $\mcal_{\chi}(\rho)$.
   \begin{lemma}
Let $G$ be a hyperrigid subset of a
commutative unital $C^*$-algebra
$\ascr$ and $\chi$ be a character of
$\ascr$ such that $G \subseteq
\ker\chi$. Let $\pi\colon \ascr \to
\ogr(\hh)$ and $\rho\colon \ascr \to
\ogr(\kk)$ be representations of
$\ascr$ on separable Hilbert spaces
$\hh$ and $\kk$, respectively, and $R
\in \ogr(\mathcal{H},\kk)$ be a
contraction such~that $\pi(g) = R^*
\rho(g) R$ for every $g\in G$. Suppose
that $\kk = \bigvee_{a \in \ascr}
\rho(a)R(\hh)$. Then
   \begin{align}  \label{clyp}
\mcal_{\chi}(\rho) = \overline{R(\mcal_{\chi}(\pi))}.
   \end{align}
   \end{lemma}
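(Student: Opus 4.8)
The plan is to reduce \eqref{clyp} entirely to the $\chi$-decomposition furnished by Theorem~\ref{rroiu}. The inclusion $\overline{R(\mcal_{\chi}(\pi))} \subseteq \mcal_{\chi}(\rho)$ is already contained in Theorem~\ref{rroiu}(iv), since $\mcal_{\chi}(\rho)$ is closed (being an intersection of kernels); so the whole content of the lemma is the reverse inclusion. First I would put $\hh_1 = \mcal_{\chi}(\pi)$, $\hh_0 = \hh_1^{\perp}$, $\kk_1 = \lcal_{\chi}$, $\kk_0 = \lcal_{\chi}^{\perp}$, and invoke Theorem~\ref{rroiu}(iii),(v),(vi) together with its ``moreover'' part --- here the minimality hypothesis $\kk = \bigvee_{a\in\ascr}\rho(a)R(\hh)$ is exactly what is needed: $(\pi,\rho,R)$ has a \emph{unitary} $\chi$-decomposition relative to $(\hh_0,\hh_1,\kk_0,\kk_1)$, with $\kk_1 = \overline{R(\mcal_{\chi}(\pi))}$ and with $R_0 \in \ogr(\hh_0,\kk_0)$ a unitary operator satisfying $R_0\,\pi(a)|_{\hh_0} = \rho(a)|_{\kk_0}\,R_0$ for all $a \in \ascr$.

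Next, since $\kk_0$ and $\kk_1$ reduce $\rho$, the subspace $\mcal_{\chi}(\rho)$ decomposes as $\big(\mcal_{\chi}(\rho)\cap\kk_0\big) \oplus \big(\mcal_{\chi}(\rho)\cap\kk_1\big)$. Condition (iii) of Definition~\ref{dsfim} gives $\rho(a)|_{\kk_1} = \chi(a) I_{\kk_1}$ for every $a$, so $\kk_1 \subseteq \mcal_{\chi}(\rho)$ and the second summand equals $\kk_1 = \overline{R(\mcal_{\chi}(\pi))}$. It therefore remains only to show $\mcal_{\chi}(\rho)\cap\kk_0 = \{0\}$. Writing $\rho_0 := \rho(\cdot)|_{\kk_0}$ and $\pi_0 := \pi(\cdot)|_{\hh_0}$, one reads off directly from \eqref{mycj} that $\mcal_{\chi}(\rho)\cap\kk_0 = \mcal_{\chi}(\rho_0)$; and since $R_0$ is a unitary intertwiner of $\pi_0$ and $\rho_0$ (so $\rho_0(a) = R_0\pi_0(a)R_0^{*}$), one has $\mcal_{\chi}(\rho_0) = R_0\big(\mcal_{\chi}(\pi_0)\big)$.

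The final step is to check $\mcal_{\chi}(\pi_0) = \{0\}$. By Lemma~\ref{mcxi}(i) applied to $\pi_0$, the space $\mcal_{\chi}(\pi_0)$ is a closed subspace of $\hh_0$ that reduces $\pi_0$ --- hence also reduces $\pi$ --- and on which $\pi$ acts as $\chi(\cdot)I$; the maximality clause of Lemma~\ref{mcxi}(i), this time applied to $\pi$ itself, then forces $\mcal_{\chi}(\pi_0) \subseteq \mcal_{\chi}(\pi) = \hh_1 = \hh_0^{\perp}$, whence $\mcal_{\chi}(\pi_0) = \{0\}$. Consequently $\mcal_{\chi}(\rho)\cap\kk_0 = R_0(\{0\}) = \{0\}$ and $\mcal_{\chi}(\rho) = \kk_1 = \overline{R(\mcal_{\chi}(\pi))}$, which is \eqref{clyp}.

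The only place where something genuine happens is the appeal to the ``moreover'' part of Theorem~\ref{rroiu}: it is the minimality hypothesis on $\rho$ that promotes the isometric $\chi$-decomposition of Theorem~\ref{rroiu}(v) to a unitary one and thereby makes $\pi_0$ and $\rho_0$ unitarily equivalent. Once that is in hand, the vanishing of the generalized eigenspace $\mcal_{\chi}(\rho_0)$ is merely the transport, along $R_0$, of the essentially tautological vanishing of $\mcal_{\chi}(\pi_0)$, so no further obstacle remains; commutativity of $\ascr$ enters only through the cited results and could, if desired, be by-passed in favour of the spectral description $\mcal_{\chi}(\rho) = \ran\big(E_{\rho}(\{\chi\})\big)$ from Lemma~\ref{pynsp}, which does not shorten the argument.
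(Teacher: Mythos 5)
Your argument is correct, and it is genuinely different from the paper's. You derive \eqref{clyp} purely from the unitary $\chi$-decomposition: by Theorem~\ref{rroiu}(iii),(v),(vi) and its ``moreover'' part, $R=R_0\oplus R_1$ with $R_0$ a unitary intertwining $\pi|_{\hh_0}$ and $\rho|_{\kk_0}$ and with $\kk_1=\lcal_\chi=\overline{R(\mcal_\chi(\pi))}$; since $\kk_0,\kk_1$ reduce $\rho$, the space $\mcal_\chi(\rho)$ splits as $(\mcal_\chi(\rho)\cap\kk_0)\oplus(\mcal_\chi(\rho)\cap\kk_1)$, the $\kk_1$-summand is all of $\kk_1$ by Definition~\ref{dsfim}(iii), and the $\kk_0$-summand is $R_0(\mcal_\chi(\pi|_{\hh_0}))$, which vanishes because $\mcal_\chi(\pi|_{\hh_0})=\mcal_\chi(\pi)\cap\mcal_\chi(\pi)^{\perp}=\{0\}$ (your appeal to the maximality clause of Lemma~\ref{mcxi}(i) accomplishes the same). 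The paper instead works with the spectral measures $E_\pi$, $E_\rho$ of the two representations: it first shows $\kk=\bigvee_{\varDelta}E_\rho(\varDelta)R(\hh)$, then proves the intertwining $E_\rho(\varDelta)R=RE_\pi(\varDelta)$ (using $R\pi=\rho R$ from Theorem~\ref{ogylu}(ii) and the Riesz representation theorem), and finally computes $\mcal_\chi(\rho)=\ran E_\rho(\{\chi\})\subseteq\overline{RE_\pi(\{\chi\})\hh}$ via Lemma~\ref{pynsp}. What your route buys is that it never uses commutativity of $\ascr$ --- Theorem~\ref{rroiu}, Lemma~\ref{mcxi} and Definition~\ref{dsfim} are all stated for general unital $C^*$-algebras --- so it actually proves the identity \eqref{clyp} without the commutativity hypothesis; your closing remark that commutativity ``enters through the cited results'' is therefore not quite accurate, though harmless. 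What the paper's route buys is the measure-theoretic intertwining relation for the spectral measures and the refined minimality statement, which are of independent use in the commutative setting; neither argument is circular, since both are placed after the proof of Theorem~\ref{rroiu}.
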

   \begin{proof}
Denote by $E_{\pi}$ and $E_{\rho}$ the spectral
measures of the representations $\pi$ and $\rho$,
respectively. First, we show that
   \begin{align} \label{kmynm}
\kk = \bigvee_{\Delta \in
\borel(\mathfrak{M}_{\ascr})}
E_\mathcal{\rho}(\varDelta) R(\hh).
   \end{align}
Indeed, let $g \in \kk \ominus \bigvee_{\Delta \in
\borel(\mathfrak{M}_{\ascr})}
E_\mathcal{\rho}(\varDelta) R(\hh)$. Then the
(regular) complex measure $\langle E_{\rho}(\cdot) R
h, g \rangle$ vanishes for every $h \in \hh$, which
implies that
   \begin{align*}
\is{\rho(a) Rh}{g} =
\int_{\mathfrak{M}_{\ascr}}
\hat{a}(\tau) \langle E_{\rho} (\D
\tau) Rh, g \rangle = 0, \quad h \in
\hh, \, a \in \ascr.
   \end{align*}
Therefore, $g \perp \bigvee_{a \in \ascr} \rho(a)
R(\hh) = \mathcal{K}$, implying that $g = 0$. This
proves \eqref{kmynm}.

Next, we show that
   \begin{align} \label{prvp}
E_{\rho}(\varDelta) R = R E_{\pi}(\varDelta), \quad
\varDelta \in \borel(\mathfrak{M}_{\ascr}).
   \end{align}
Indeed, it follows from
Theorem~\ref{ogylu}(ii) that
$R\pi(a)=\rho(a)R$ for every $a \in
\ascr$. We then have
   \begin{align} \notag
\int_{\mathfrak{M}_{\ascr}} \hat{a}(\tau)
\langle E_{\rho} (\D \tau) Rh, g \rangle & =
\is{\rho(a) Rh}{g} = \is{\pi(a) h}{R^* g}
   \\ \label{prwsp}
& = \int_{\mathfrak{M}_{\ascr}}
\hat{a}(\tau) \langle E_{\pi} (\D \tau) h,
R^* g \rangle, \quad h \in \hh, \, g \in
\kk.
   \end{align}
Since, by the Gelfand-Naimark theorem,
$C(\mathfrak{M}_{\ascr})= \{\hat{a}\colon a \in
\ascr\}$, we infer from \eqref{prwsp} and the Riesz
representation theorem (see
\cite[Theorem~6.19]{Rud87}) that $\langle E_{\rho}
(\cdot) Rh, g \rangle = \langle E_{\pi} (\cdot) h, R^*
g \rangle$ for all $h \in \hh$ and $g \in \kk,$ which
proves~\eqref{prvp}.

Now, by Lemma~\ref{pynsp}, we have
   \begin{align*}
\mcal_{\chi}(\rho) & = E_{\rho}(\{\chi\})
\kk \overset{\eqref{kmynm}} =
E_{\rho}(\{\chi\}) \bigvee_{\varDelta \in
\borel(\mathfrak{M}_{\ascr})}
E_{\rho}(\varDelta) R(\hh)
   \\
& \subseteq \bigvee_{\varDelta \in
\borel(\mathfrak{M}_{\ascr})}
E_{\rho}(\{\chi\}) E_{\rho}(\varDelta)
R(\hh)
   \\
& = \bigvee_{\varDelta \in
\borel(\mathfrak{M}_{\ascr})} E_{\rho}(\{\chi\} \cap
\varDelta) R(\hh) = \overline{E_{\rho}(\{\chi\})
R(\hh)}
   \\
& \hspace{-1ex}\overset{\eqref{prvp}}= \overline{R
E_{\pi}(\{\chi\}) \hh} =
\overline{R(\mcal_{\chi}(\pi))}.
   \end{align*}
Since, by Theorem~\ref{rroiu}(iv),
$\overline{R(\mcal_{\chi}(\pi))}
\subseteq \mcal_{\chi}(\rho)$, we
conclude that identity \eqref{clyp}
holds.
   \end{proof}
For representations $\pi \colon C(X)
\to \ogr(\hh)$ induced by normal
$d$-tuples $\tbold \in \ogr(\hh)^d$
with Taylor spectrum contained in $X$,
the space $\mcal_{\chi}(\pi)$ is the
joint eigenspace corresponding to the
joint eigenvalue $\lambdab$ of
$\tbold$, where $\lambdab$ is uniquely
determined by the character $\chi$.
Recall that the characters of $C(X)$
are precisely the evaluation
functionals at points of $X$.
    \begin{lemma} \label{versyn}
Suppose that $X$ is a nonempty compact
subset of $\comp^d$ $($$d\in \natu$$)$,
$\hh$ is a Hilbert space, $\tbold=(T_1,
\ldots, T_d) \in \ogr(\hh)^d$ is a normal
$d$-tuple such that $\sigma(\tbold)
\subseteq X$, and $\lambdab = (\lambda_1,
\ldots, \lambda_d) \in X$. Then
   \begin{align} \label{cegnv}
\mcal_{\chi_{_{\lambdab}}}(\pi) =
\bigcap_{j=1}^d \nul(\lambda_j I-T_j),
   \end{align}
where $\pi$ is the representation of
$C(X)$ induced by $\tbold$ $($see
\eqref{dyfq}$)$ and
$\chi_{_{\lambdab}}$ is the character
of $C(X)$ defined by
$\chi_{_{\lambdab}}(f) = f(\lambdab)$
for $f\in C(X)$.
   \end{lemma}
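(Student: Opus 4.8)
The plan is to prove the two inclusions in \eqref{cegnv} separately, working directly with the joint spectral measure $E_{\tbold}$ of $\tbold$ and with the fact that the induced representation $\pi$ acts through the Stone--von Neumann calculus, $\pi(f)=f(\tbold)=\int_{\sigma(\tbold)}f\,\D E_{\tbold}$ for $f\in C(X)$, where $\sigma(\tbold)=\supp(E_{\tbold})\subseteq X$ (see \eqref{tyjyt} and \eqref{dzer}). Recall also that $\mcal_{\chi_{_{\lambdab}}}(\pi)=\bigcap_{f\in C(X)}\nul(\pi(f)-\chi_{_{\lambdab}}(f)I)$ by \eqref{mycj}.

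For the inclusion ``$\subseteq$'', I would test against coordinate functions. Since $\sigma(\tbold)\subseteq X$, the restriction $\xi_j|_X$ of the $j$-th coordinate function belongs to $C(X)$, and $\pi(\xi_j)=\xi_j(\tbold)=T_j$ by \eqref{tyjyt}, while $\chi_{_{\lambdab}}(\xi_j)=\xi_j(\lambdab)=\lambda_j$. Hence every $h\in\mcal_{\chi_{_{\lambdab}}}(\pi)$ satisfies $(T_j-\lambda_j I)h=(\pi(\xi_j)-\chi_{_{\lambdab}}(\xi_j)I)h=0$ for $j=1,\ldots,d$, so $h\in\bigcap_{j=1}^d\nul(\lambda_j I-T_j)$.

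For the reverse inclusion, let $h\in\bigcap_{j=1}^d\nul(\lambda_j I-T_j)$ and consider the finite positive Borel measure $\mu_h:=\is{E_{\tbold}(\cdot)h}{h}$ on $\cbb^d$, which is concentrated on $\sigma(\tbold)$ and has total mass $\|h\|^2$. From
\[
0=\|(T_j-\lambda_j I)h\|^2=\int_{\cbb^d}|z_j-\lambda_j|^2\,\D\mu_h(\zbold),\qquad j=1,\ldots,d,
\]
it follows that $\mu_h$ is concentrated on $\{\zbold:z_j=\lambda_j\}$ for each $j$, and intersecting these sets over $j=1,\ldots,d$ shows that $\mu_h$ is concentrated on the singleton $\{\lambdab\}$. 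Thus $\is{E_{\tbold}(\cbb^d\setminus\{\lambdab\})h}{h}=0$, and since $E_{\tbold}(\cbb^d\setminus\{\lambdab\})$ is an orthogonal projection this forces $E_{\tbold}(\cbb^d\setminus\{\lambdab\})h=0$, i.e.\ $E_{\tbold}(\{\lambdab\})h=h$. Using that $f(\tbold)E_{\tbold}(\{\lambdab\})=f(\lambdab)E_{\tbold}(\{\lambdab\})$ for every $f\in C(X)$, I then get $\pi(f)h=f(\tbold)E_{\tbold}(\{\lambdab\})h=f(\lambdab)E_{\tbold}(\{\lambdab\})h=f(\lambdab)h=\chi_{_{\lambdab}}(f)h$, so $h\in\nul(\pi(f)-\chi_{_{\lambdab}}(f)I)$ for all $f\in C(X)$, that is, $h\in\mcal_{\chi_{_{\lambdab}}}(\pi)$. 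This establishes \eqref{cegnv}.

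The whole argument is essentially bookkeeping with the joint spectral theorem, so I do not expect a serious obstacle; the only step that needs a moment's care is the passage from the vanishing of $\int|z_j-\lambda_j|^2\,\D\mu_h$ for each $j$ to $E_{\tbold}(\{\lambdab\})h=h$, which relies on the projection-valued nature of $E_{\tbold}$. (Alternatively one could invoke Lemma~\ref{pynsp} together with the canonical identification of $\mathfrak{M}_{C(X)}$ with $X$, under which the spectral measure of $\pi$ corresponds to $E_{\tbold}$, reducing \eqref{cegnv} to the identity $\ran(E_{\tbold}(\{\lambdab\}))=\bigcap_{j=1}^d\nul(\lambda_j I-T_j)$; but the direct route above avoids setting up that identification.)
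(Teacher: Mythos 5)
Your proof is correct and follows essentially the same route as the paper's: both directions are handled by direct bookkeeping with the joint spectral measure $E_{\tbold}$, the paper testing the function $\zbold\mapsto\|\zbold-\lambdab\|$ where you test the coordinate functions $\xi_j$, and both reducing the reverse inclusion to the concentration of $\is{E_{\tbold}(\cdot)h}{h}$ at $\{\lambdab\}$. These are only cosmetic variations on the argument given in the paper.
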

   \begin{proof}
Set $\mcal=
\mcal_{\chi_{_{\lambdab}}}(\pi)$. Let
$E_{\tbold}$ be the joint spectral
measure of $\tbold$. By \eqref{dzer},
we have $\supp(E_{\tbold}) =
\sigma(\tbold) \subseteq X$. First, we
prove the inclusion ``$\subseteq$'' in
\eqref{cegnv}. To this end, take $h\in
\mcal$. Then $(f-f(\lambdab)
\boldsymbol{1})(\tbold)h=0$ for all
$f\in C(X)$, or equivalently, if and
only if $\int_{X}
|f(\zbold)-f(\lambdab)|^2
\is{E_{\tbold}(\D \zbold)h}{h} = 0$ for
all $f\in C(X)$. Substituting the
function $f_{\lambdab}\in C(X)$ defined
by
$f_{\lambdab}(\zbold)=\|\zbold-\lambdab\|$
for $\zbold\in X$, we get
   \begin{align*}
0=\int_{X} \|\zbold-\lambdab\|^2
\is{E_{\tbold}(\D \zbold)h}{h} & =
\sum_{j=1}^d \int_{\cbb^d}
|\xi_j(\zbold)-\lambda_j|^2
\is{E_{\tbold}(\D \zbold)h}{h}
   \\
& \hspace{-1ex}\overset{\eqref{tyjyt}}=
\sum_{j=1}^d \|T_jh -\lambda_j h\|^2,
   \end{align*}
where $\xi_j$ is as in \eqref{corge}.
Consequently, $h \in \bigcap_{j=1}^d
\nul(\lambda_j I-T_j)$. Conversely, if
$h \in \bigcap_{j=1}^d \nul(\lambda_j
I-T_j)$, then arguing as above we see
that $\int_{X} \|\zbold-\lambdab\|^2
\is{E_{\tbold}(\D \zbold)h}{h}=0$,
which implies that $\supp
\is{E_{\tbold}(\cdot)h}{h} \subseteq
\{\lambdab\}$ (equivalently: $h \in
\ran(E_{\tbold}(\{\lambdab\}))$), and
thus
   \begin{align*}
\|(f-f(\lambdab)
\boldsymbol{1})(\tbold)h\|^2 = \int_{X}
|f(\zbold)-f(\lambdab)|^2 \is{E_{\tbold}(\D
\zbold)h}{h} = 0, \quad f\in C(X).
   \end{align*}
This implies that $h\in \mcal$. It is worth noting
that, along the way, we have proved that $\mcal =
\ran(E_{\tbold}(\{\lambdab\}))$.
   \end{proof}
   \begin{proof}[Proof of Proposition~\ref{kridut}]
It follows from the Stone-Weierstrass
theorem and the automatic continuity of
representations of $C^*$-algebras that,
if $X$ is a compact subset of $\cbb^d$
and both $\pi \colon C(X) \to
\ogr(\hh)$ and $\rho \colon C(X) \to
\ogr(\kk)$ are representations, then
for every $f \in C(X)$, there exists a
sequence $\{p_n\}_{n=1}^{\infty}$ of
complex polynomials in the
indeterminates $\xi_1, \ldots, \xi_d$
and $\bar\xi_1, \ldots, \bar\xi_d$ such
that
   \begin{align*}
\pi(f) & = \lim_{n \to \infty} p_n(T_1, \ldots, T_d,
T_1^*, \ldots, T_d^*),
   \\
\rho(f) & = \lim_{n\to\infty} p_n(N_1, \ldots, N_d,
N_1^*, \ldots, N_d^*),
   \end{align*}
where $T_j = \pi(\xi_j|_{X})$ and $N_j =
\rho(\xi_j|_{X})$ for $j=1, \ldots, d$. Using this
fact and Proposition~\ref{ryled}, one can easily
verify that condition (ii) (respectively, (iii)) of
Proposition~\ref{kridut} is equivalent to condition
(ii) (respectively, (v)) of Theorem~\ref{rconrep} with
$\ascr = C(X)$ and $\chi(f) = f(\lambdab)$ for $f \in
C(X)$. This theorem therefore completes the proof.
   \end{proof}
   \begin{proof}[Proof of Proposition~\ref{dtdw}]
This corollary follows from Propositions~\ref{gfza}
and \ref{ryled}, and Lemma~\ref{versyn}, using the
fact that if $\pi\colon C(X) \to \ogr(\hh)$ and
$\rho\colon C(X) \to \ogr(\kk)$ are representations
induced by normal $d$-tuples $\tbold$ and $\nbold$,
respectively, with Taylor spectrum in $X$, and if
$R\in \ogr(\hh,\kk)$ is a contraction, then
   \begin{align} \label{fdea}
   \begin{minipage}{70ex}
{\em $RT_j=N_jR$ for all $j=1,\ldots, d$ if and only
if $R\pi(a)=\rho(a)R$ for all $a\in \ascr$ with
$\ascr=C(X)$.}
   \end{minipage}
   \end{align}
The equivalence \eqref{fdea}, in turn, follows from
the Putnam-Fuglede theorem and the Stone-Weierstrass
theorem (as in the proof of Proposition~\ref{kridut}).
   \end{proof}
   \begin{proof}[Proof of
Proposition~\ref{dttr}]
      Argue similarly to the proof of
Proposition~\ref{dtdw}, using Proposition~\ref{jger}
in place of Proposition~\ref{gfza}, via the
equivalence \eqref{fdea}.
   \end{proof}
   \begin{proof}[Proof of
Proposition~\ref{dwct}]
   Apply Theorem~\ref{ogylu}, Proposition~\ref{ryled},
and the equivalence~\eqref{fdea}.
   \end{proof}
   \begin{proof}[Proof of Proposition~\ref{axer}]
(i)$\Leftrightarrow$(ii) According to
Proposition~\ref{ryled}, condition (ii) of
Theorem~\ref{ogylu} with $\ascr = C(X)$ is equivalent
to condition (ii) of Proposition~\ref{axer}. Hence,
defining the state $\phi$ on $C(X)$ by $\phi(f) =
\int_X f \, \D\mu$ for $f\in C(X)$, we deduce from
Theorem~\ref{ogylu} that the required equivalence
holds.

(ii)$\Rightarrow$(iii) Apply (ii) to the triplet
$(\kk,R,N)$ defined by $\kk:=\kk_1 \oplus \cdots
\oplus \kk_n$, $Rh:= R_1h \oplus \cdots \oplus R_n h$
for $h \in \hh$, and $N:=N_1 \oplus \cdots \oplus
N_n$.

(iii)$\Rightarrow$(ii) Set $\kk_i=\kk$ and $N_i=N$ for
$i=1, \ldots, n$, $R_1=R$ and $R_i=0$ for $i=2,
\ldots, n$. Substituting these into (iii) yields
$NR=RT$.

The ``moreover'' part follows from the above
arguments, together with Proposition~\ref{ryled},
Theorem~\ref{soorh}, and the equality
$\mathfrak{M}_{C(X)}=X$.
   \end{proof}
   \section{\label{9.e}Examples}
In this section we provide two examples showing that a
hyperrigid set may not be annihilated by any state,
that the corresponding intertwining relation does not
hold, and that the set $J$ in \eqref{zbyr} need not be
an ideal (cf.~Proposition~\ref{glmp}).
   \begin{exa} \label{nrhu}
Fix $\kappa \in \natu \cup \{\infty\}$ and set
$L_{\kappa} = \{j \in \natu \colon j \le \kappa\}$.
Let $\hh$ and $\kk$ be Hilbert spaces with $\dim \hh =
\dim \kk = \kappa$, and let $\{e_j\}$ and $\{f_j\}$ be
orthonormal bases of $\hh$ and $\kk$, respectively.
Take $\{r_j\}_{j =1}^{\kappa} \subseteq (0,1)$ with
$\inf_{j \in L_{\kappa}} r_j > 0$, and fix $\lambda
\in \cbb \setminus \{0\}$. For $j\in L_{\kappa}$, set
$t_j = \lambda (1 + r_j)$ and $n_j = \frac{\lambda (1
+ r_j)}{r_j}$. The sequences $\{t_j\}_{j=1}^{\kappa}$,
$\{n_j\}_{j=1}^{\kappa}$ and $\{r_j\}_{j=1}^{\kappa}$
are bounded. Consider the normal operators $T \in
B(\hh)$ and $N \in B(\kk)$, and the contraction $R \in
B(\hh,\kk)$ given by $T e_j = t_j e_j$, $N f_j = n_j
f_j$ and $R e_j = r_j f_j$ for $j \in L_{\kappa}$.
Define the compact set $X =
\overline{\{t_j\}_{j=1}^{\kappa} \cup
\{n_j\}_{j=1}^{\kappa}}$. Then $\sigma(T) \cup
\sigma(N) \subseteq X$, $0 \notin X$ and $\lambda
\notin X$. By \cite[Corollary~1, p.~795]{Sh20} (or
\cite[Theorem~2.4(i)]{P-S-24}), the set $G := \{\xi,
\bar{\xi}\xi\}$ is hyperrigid in $C(X)$, where
$\xi(z)=z$ for $z\in X$. The unique extension property
implies that $G_{\lambda}:=\{\xi - \lambda,
\bar{\xi}\xi\}$ is also hyperrigid. By construction,
$T - \lambda I = R^* (N - \lambda I)R$ and $T^*T =
R^*N^*N R$. Using Propositions~\ref{glmp}
and~\ref{ryled}, we deduce that $RT \ne NR$, and the
set $J := \big\{ a \in C(X)\colon a(T) = R^* a(N) R
\big\}$ is not an ideal. Moreover, there is no state
$\phi$ on $C(X)$ vanishing on $G_{\lambda}$. Indeed,
otherwise there would exist a Borel probability
measure $\mu$ on $X$ such that $\int_{X} |z|^2
\D\mu(z) = \phi(\bar{\xi} \xi) = 0$. Since $|z|^2 > 0$
for all $z \in X$, we would obtain $0 = \mu(X)$,
contradicting $\mu(X) = 1$.
   \hfill $\diamondsuit$
   \end{exa}
   \begin{exa} \label{MScherer}
Let $\xi(z)=z$ on
$\tbb=\{z\in\cbb\colon |z|=1\}$. By
\cite[Corollary~2, p.~795]{Sh20} (or
\cite[Theorem~2.4(i)]{P-S-24}), the set
$G := \big\{\xi\big\}$, and
consequently $G_{2} :=
\big\{\xi-2\big\}$, is hyperrigid in
$C(\tbb)$. Define $T =
\left[\begin{smallmatrix}
-1 & 0 \\
0 & 1
\end{smallmatrix}\right]$,
$N = \left[\begin{smallmatrix}
-1 & 0 \\
0 & -1
\end{smallmatrix}\right]$
and $R = \left[\begin{smallmatrix}
1 & 0 \\
0 & 1/\sqrt{3}
\end{smallmatrix}\right]$.
Then $T$ and $N$ are unitary, and
$\|R\|\Le 1$. Clearly, $R^*(N - 2I)R =
T - 2I$. As in Example~\ref{nrhu} (or
by direct computation), $RT \ne NT$,
the corresponding set $J$ is not an
ideal, and $G_2$ is not annihilated by
any state on $C(\tbb)$.
   \hfill $\diamondsuit$
   \end{exa}
   \appendix
\section{Annihilation of hyperrigid sets.}
The question of the existence of a
hyperrigid set $G$ annihilated by a
given character of the $C^*$-algebra
$\ascr$ is completely answered below in
the case of a singly generated
commutative unital $C^*$-algebra
$\ascr$.
   \begin{prop} \label{chyr}
Let $\ascr$ be a commutative unital
$C^*$-algebra generated by a single
element $t \in \ascr$. Then, for every
character $\chi$ of $\ascr$, there
exists a hyperrigid set $G \subseteq
\ascr$ of cardinality at most two such
that $G \subseteq \ker \chi$. Moreover,
if $G$ is a hyperrigid subset of
$\ascr$, then there exists at most one
character $\chi$ of $\ascr$ such that
$G \subseteq \ker \chi$.
   \end{prop}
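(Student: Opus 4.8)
The plan is to deduce the existence assertion from the hyperrigidity of sets of the form $\{t^{*m}t^n\}$ proved in \cite[Theorem~2.4(i)]{P-S-24}, and to obtain the uniqueness (``moreover'') assertion by an elementary codimension argument.

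For the existence part I would first invoke the Gelfand--Naimark theorem to identify $\ascr$ with $C(X)$, where $X=\sigma(t)\subseteq\cbb$ is compact and $t$ corresponds to the identity function $\xi$ on $X$; under this identification the characters of $\ascr$ are exactly the point evaluations, so the prescribed character $\chi$ is evaluation at the point $\lambda:=\chi(t)\in X$. Replacing the generator $t$ by $t-\lambda e$, which still generates $\ascr$ and turns $\chi$ into evaluation at $0$, I may assume $\chi(t)=0$. Then I would take $G:=\{t,\,t^*t\}$. This set contains $t$, hence generates $\ascr$; it has cardinality at most two; and it lies in $\ker\chi$, since $\chi(t)=0$ and, by multiplicativity of $\chi$, $\chi(t^*t)=|\chi(t)|^2=0$. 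Finally, under $\ascr\cong C(X)$ the set $G$ corresponds to $\{\xi,\bar\xi\xi\}$, which is hyperrigid in $C(X)$ by \cite[Theorem~2.4(i)]{P-S-24} (cf.\ also \cite[Corollary~1, p.~795]{Sh20}), exactly as used in Example~\ref{nrhu}. This completes the existence part.

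For the ``moreover'' part I would argue directly. Suppose $\chi_1$ and $\chi_2$ are characters of $\ascr$ with $G\subseteq\ker\chi_1\cap\ker\chi_2$. Since $G$ is hyperrigid, it generates $\ascr$; hence, writing $\ascr_G$ for the $*$-subalgebra (not necessarily unital) generated by $G$, its closure $\overline{\ascr_G}$ is a closed ideal with $\ascr=\overline{\ascr_G}+\cbb e$, so $\overline{\ascr_G}$ has codimension at most one in $\ascr$. On the other hand, each $\ker\chi_j$ is a closed selfadjoint ideal containing $G$, hence containing $\overline{\ascr_G}$, so $\overline{\ascr_G}\subseteq\ker\chi_1\cap\ker\chi_2$. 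If $\chi_1\neq\chi_2$, then $\chi_1$ and $\chi_2$ are linearly independent functionals, so $\ker\chi_1\cap\ker\chi_2$ has codimension two, forcing $\overline{\ascr_G}$ to have codimension at least two --- a contradiction. Therefore $\chi_1=\chi_2$. (Alternatively, it follows immediately from the ``moreover'' part of Corollary~\ref{hpse}, or of Corollary~\ref{tiply}.)

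The genuinely nontrivial ingredient is the hyperrigidity of $\{\xi,\bar\xi\xi\}$ in $C(X)$, which I import from \cite[Theorem~2.4(i)]{P-S-24}; everything else --- the Gelfand identification, the remark that replacing $t$ by $t-\lambda e$ preserves generation, the membership $t^*t\in\ker\chi$, and the codimension count --- is routine. Hence I do not expect a real obstacle; the only point deserving a moment's care is the reduction, namely that after the translation $t\mapsto t-\lambda e$ one is still in the singly generated setting and that $\{t,t^*t\}$ still generates $\ascr$. It is worth noting that the uniqueness assertion in fact holds for an arbitrary generating set of any unital $C^*$-algebra, the singly generated commutative hypothesis being used only for the existence part.
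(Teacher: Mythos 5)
Your proof is correct and follows essentially the same route as the paper: reduce via the Gelfand identification and a translation of the generator so that the given character becomes evaluation at $0$, then invoke the hyperrigidity of $\{\xi,\bar\xi\xi\}$ from \cite[Theorem~2.4(i)]{P-S-24} and transport it back through the induced $*$-isomorphism. The only divergence is in the ``moreover'' part, where the paper simply cites Corollary~\ref{hpse} while you give a direct (and equally valid) argument --- indeed, two characters vanishing on a generating set already agree on the dense unital $*$-algebra it generates, so your codimension count, though slightly roundabout, is fine.
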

   \begin{proof}
By \cite[Theorem~11.19]{Rud73}, there
is no loss of generality in assuming
that $\ascr=C(X)$ and $t=\xi$, where
$X$ is a nonempty compact subset of
$\cbb$ and $\xi$ is the so-called {\em
coordinate function} on $X$ defined by
$\xi(z)=z$ for $z\in X$. Given
$\omega\in X$, we define the map
$\tau_{\omega}\colon X \to X-\omega$ by
$\tau_{\omega}(z)=z-\omega$ for $z\in
X$. Then $0\in \tau_{\omega}(X)$. Set
$\widetilde{G}_{\omega}=\{\xi_{\omega},
\bar\xi_{\omega} \xi_{\omega}\}$, where
$\xi_{\omega}$ is the coordinate
function on $\tau_{\omega}(X)$. By
\cite[Theorem~2.4(i)]{P-S-24},
$\widetilde{G}_{\omega}$ is a
hyperrigid set of generators of
$C(\tau_{\omega}(X))$ and $f(0)=0$ for
every $f\in \widetilde{G}_{\omega}$.
Define the composition map
$\pi_{\omega}\colon C(\tau_{\omega}(X))
\to C(X)$ by $\pi_{\omega} (f)=f\circ
\tau_{\omega}$ for $f\in
C(\tau_{\omega}(X))$. Then
$\pi_{\omega}$ is a unital
$C^*$-algebra isomorphism. As a
consequence,
$G_{\omega}:=\pi_{\omega}(\widetilde{G}_{\omega})$
is a hyperrigid subset of $C(X)$
(cf.~Definition~\ref{dyfnh}). It is
easy to see that $f(\omega)=0$ for
every $f\in G_{\omega}$, or
equivalently $G_{\omega} \subseteq \ker
\chi_{\omega}$, where $\chi_{\omega}$
is the character of $C(X)$ given by
$\chi_{\omega}(f)=f(\omega)$ for $f\in
C(X)$. Since characters of $C(X)$ are
of the form $\chi_{\omega}$ with
$\omega \in X$ (see \cite[p.\ 271,
Example~(a)]{Rud73}), this implies that
for every character $\chi$ of $C(X)$,
there exists a hyperrigid set $G$ of
generators of $C(X)$ such that $G
\subseteq \ker \chi$.

The ``moreover'' part follows from
Corollary~\ref{hpse}.
   \end{proof}
As shown below, the assumption
``$G\subseteq \ker \phi$'' appearing in
Theorem~\ref{soorh} can be replaced by
more general conditions.
   \begin{prop} \label{sweq}
Let $G$ be a hyperrigid subset of a
unital $C^*$-algebra $\ascr$, let
$\pi\colon \ascr \to \ogr(\hh)$ and
$\rho\colon \ascr \to \ogr(\kk)$ be
representations of $\ascr$ on separable
Hilbert spaces $\hh$ and $\kk$,
respectively, and let $R\in
\ogr(\hh,\kk)$ be a contraction such
that $\pi(g) = R^*\rho(g)R$ for all
$g\in G$. Then the following assertions
hold{\em :}
   \begin{enumerate}
   \item[(i)] If $\{\phi_{\iota}\}_{\iota \in
\varLambda}$ is a family of states on $\ascr$ such
that $G \subseteq \bigcap_{\iota \in \varLambda} \ker
\phi_{\iota}$, then $R \pi(a) = \rho(a) R$ for all
$a\in \ascr$ and $\ker \phi_{\iota} \subseteq J$ for
all $\iota \in \varLambda$, where $J$ $($see {\em
\eqref{zbyr}}$)$ is a closed ideal in $\ascr$.
   \item[(ii)] If $\{\phi_{\iota}\}_{\iota \in
\varLambda}$ is as in {\em (i)} and $R$
is non-isometric, then there exists a
character $\chi$ of $\ascr$ such that
$\phi_{\iota}=\chi$ for all $\iota\in
\varLambda$.
   \item[(iii)] If $\mcal$ is a
Hilbert space and $\varPsi\colon \ascr
\to \ogr(\mcal)$ is a nonzero
completely positive map such that $G
\subseteq \ker \varPsi$, then there
exists a character $\chi$ of $\ascr$
such that $\varPsi(a)=\chi(a)
\varPsi(e)$ for all $a\in \ascr$.
   \end{enumerate}
   \end{prop}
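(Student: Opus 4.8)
The plan is to derive all three assertions from Theorem~\ref{soorh} together with the unique-extension-property computation used in its proof, treating $(\pi,\rho,R)$ as a triple in the setting of that theorem. For (i), fix any $\iota_0\in\varLambda$ (the family being nonempty) and set $\phi:=\phi_{\iota_0}$, so $G\subseteq\ker\phi$. If $R$ is an isometry, then $J=\ascr$ by the footnote to \eqref{zbyr}, whence $\ker\phi_\iota\subseteq J$ holds trivially and $\ascr$ is a closed ideal, while $R\pi(a)=\rho(a)R$ for all $a$ follows from Theorem~\ref{delt} applied to the isometry $R$. If $R$ is non-isometric, then Theorem~\ref{soorh} applies, and since a state vanishing on $G$ exists we are in alternative $(\alpha)$: this gives $R\pi(a)=\rho(a)R$ for all $a$ and the fact that $J$ is a closed ideal, and, running the ``moreover'' part of Theorem~\ref{soorh} with each $\phi_\iota$ in place of $\phi$, it gives $\ker\phi_\iota=J$, in particular $\ker\phi_\iota\subseteq J$. (Alternatively, redoing the unique-extension-property step in the proof of alternative $(\alpha)$ of Theorem~\ref{soorh} with $\phi_\iota$ produces $\pi(a)=R^*\rho(a)R+\phi_\iota(a)\triangle_R$, from which $\ker\phi_\iota\subseteq J$ is immediate, and the ideal property of $J$ then comes from Lemma~\ref{pirhr}(i).)

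For (ii) I would simply note that, under the non-isometric hypothesis, the ``moreover'' part of Theorem~\ref{soorh} shows that each $\phi_\iota$ is a character with $\ker\phi_\iota=J$; since a character of $\ascr$ is determined by its kernel (the quotient being one-dimensional), all the $\phi_\iota$ coincide, and their common value is the desired $\chi$. For (iii) I would reduce to (i) and (ii) via the vector functionals of $\varPsi$. First, $\varPsi(e)\neq 0$: $\varPsi$ is positive (being completely positive), and if $\varPsi(e)=0$ then for every $m\in\mcal$ the positive linear functional $a\mapsto\is{\varPsi(a)m}{m}$ vanishes at $e$, hence identically by the Cauchy-Schwarz inequality for positive functionals, so $\varPsi=0$ after polarization, a contradiction. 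Choose $m_0$ with $\is{\varPsi(e)m_0}{m_0}>0$; for each $m$ with $\is{\varPsi(e)m}{m}>0$, the normalized vector state $\phi_m(a):=\is{\varPsi(a)m}{m}/\is{\varPsi(e)m}{m}$ is a state on $\ascr$ vanishing on $G$, since $\varPsi(g)=0$ for $g\in G$. Applying (ii) to this nonempty family yields a single character $\chi$ with $\phi_m=\chi$ for all such $m$, that is, $\is{\varPsi(a)m}{m}=\chi(a)\is{\varPsi(e)m}{m}$; for the remaining $m$, those with $\is{\varPsi(e)m}{m}=0$, the positive functional $a\mapsto\is{\varPsi(a)m}{m}$ again vanishes identically, so the identity persists. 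Hence $\is{\varPsi(a)m}{m}=\chi(a)\is{\varPsi(e)m}{m}$ for every $m\in\mcal$, and polarizing in $m$ gives $\varPsi(a)=\chi(a)\varPsi(e)$ for all $a\in\ascr$.

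The substantive content is carried entirely by Theorem~\ref{soorh}; the only real care is needed in (iii), where one must verify that $\varPsi(e)\neq 0$ (so that the family of vector states is nonempty) and must dispose of the degenerate directions $m$ with $\is{\varPsi(e)m}{m}=0$ before polarizing. It also seems worth flagging that the non-isometric hypothesis on $R$ is genuinely used in (ii) and (iii): when $\ascr=\mathcal{O}_n$ and $G=G_n$ there is a state vanishing on $G$ yet no character of $\ascr$, which by Theorem~\ref{soorh} forces every $R$ with $\pi(g)=R^*\rho(g)R$ on $G$ to be isometric, so the conclusions of (ii) and (iii) would otherwise fail.
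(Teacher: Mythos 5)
Your proof is correct and takes essentially the same route as the paper: parts (i) and (ii) are read off from Theorem~\ref{soorh}($\alpha$) and its ``moreover'' part applied to each $\phi_{\iota}$, and part (iii) is reduced to (ii) via the normalized vector states $a\mapsto\is{\varPsi(a)m}{m}/\is{\varPsi(e)m}{m}$, with the Cauchy--Schwarz inequality disposing of the vectors where $\is{\varPsi(e)m}{m}=0$ and polarization finishing the argument. Your explicit remark that (iii) uses the non-isometric hypothesis on $R$ through (ii) is consistent with the paper, whose proof of (iii) likewise invokes (ii).
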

   \begin{proof} (i) It is a
direct consequence of Theorem~\ref{soorh}($\alpha$).

(ii) Suppose $R$ is non-isometric. Then condition
($\alpha$) of Theorem~\ref{soorh} implies that for
every $\iota\in \varLambda$, $\ker \phi_{\iota}=J$ and
$\phi_{\iota}$ is a character of $\ascr$. Hence,
$\phi_{\iota}=\chi$ for every $\iota\in \varLambda$,
where $\chi$ is a character of $\ascr$.

(iii) Suppose now that $G \subseteq
\ker \varPsi$ for some nonzero
completely positive map $\varPsi\colon
\ascr \to \ogr(\mcal)$. We will show
that
   \begin{align} \label{insq}
G \subseteq \bigcap_{f \in \varOmega}
\ker \phi_f,
   \end{align}
where $\varOmega=\big\{f\in \mcal\colon
\|f\|=1 \text{ and }
\is{\varPsi(e)f}{f} \neq 0\big\}$, and
$\phi_f$ is the state on $\ascr$
defined by
   \begin{align*}
\phi_f(a)=\frac{\is{\varPsi(a)f}{f}}{\is{\varPsi(e)f}{f}},
\quad a\in \ascr, \, f \in \varOmega.
   \end{align*}
That inclusion \eqref{insq} holds is an
immediate consequence of the identity
$\ker \varPsi = \bigcap_{f \in
\varOmega} \ker \phi_f$, whose proof
proceeds as follows. The inclusion
``$\subseteq$'' is obvious, while the
converse ``$\supseteq$'' can be deduced
from the Cauchy-Schwarz inequality:
   \begin{align} \label{fguyw}
|\is{\varPsi(a)f}{f}|^2 \Le
\is{\varPsi(a^*a)f}{f}
\is{\varPsi(e)f}{f}, \quad f\in \mcal,
\, a \in \ascr.
   \end{align}
It follows from (ii) and \eqref{insq}
that there exists a character $\chi$ of
$\ascr$ such that $\phi_f = \chi$ for
all $f \in \varOmega$. Fix $g\in
\varOmega$. Then
   \begin{align*}
\frac{\is{\varPsi(a)f}{f}}{\is{\varPsi(e)f}{f}}
= \phi_f(a) = \phi_g(a) =
\frac{\is{\varPsi(a)g}{g}}{\is{\varPsi(e)g}{g}},
\quad f \in \varOmega.
   \end{align*}
This, together with \eqref{fguyw},
implies that
   \begin{align*}
\is{\is{\varPsi(e)g}{g} \varPsi(a)f}{f}
= \is{\is{\varPsi(a)g}{g}
\varPsi(e)f}{f}, \quad f\in \mcal.
   \end{align*}
Hence, we have
   \begin{align*}
\is{\varPsi(e)g}{g} \varPsi(a) =
\is{\varPsi(a)g}{g} \varPsi(e), \quad a
\in \ascr,
   \end{align*}
so
   \begin{align*}
\varPsi(a) =
\frac{\is{\varPsi(a)g}{g}}{\is{\varPsi(e)g}{g}}
\varPsi(e) = \phi_g(a) \varPsi(e) =
\chi(a) \varPsi(e), \quad a \in \ascr.
   \end{align*}
This shows that $\varPsi(a)=\chi(a)
\varPsi(e)$ for all $a \in \ascr$.
   \end{proof}
   \section{\label{App.B}Representations versus normal $d$-tuples.}
Proposition~\ref{ryled} below describes
the relationship between
representations of $C(X)$ and normal
$d$-tuples. For the reader's
convenience, we include its proof.
   \begin{prop} \label{ryled}
Let $X$ be a nonempty compact subset of
$\comp^d$ $($$d\in \natu$$)$, and let
$\hh$ be a Hilbert space. If $\pi\colon
C(X)\to\ogr(\hh)$ is a representation
and $T_j=\pi(\xi_j|_{X})$ for
$j=1,\dots,d$, then
$\tbold=(T_j)_{j=1}^d$ is a normal
$d$-tuple such that $\sigma(\tbold)
\subseteq X$, and
   \begin{equation} \label{dyfq}
\pi(f)= f (\tbold), \quad f\in C(X).
   \end{equation}
Conversely, if $\tbold=(T_j)_{j=1}^d\in
\ogr(\hh)^d$ is a normal $d$-tuple such
that $\sigma(\tbold) \subseteq X$, then
the map $\pi\colon C(X)\to\ogr(\hh)$
defined by \eqref{dyfq}, is the unique
representation satisfying
$\pi(\xi_j|_{X})=T_j$ for
$j=1,\dots,d$. Moreover, if $\rho\colon
C(X) \to \ogr(\kk)$ is a representation
on a Hilbert space $\kk$ and $R\in
\ogr(\hh,\kk)$, then $R\pi(f)=\rho(f)R$
for all $f\in C(X)$ if and only if
$RT_j=S_jR$ for $j=1,\ldots,d$, where
$S_j=\rho(\xi_j|_{X})$ for
$j=1,\dots,d$.
   \end{prop}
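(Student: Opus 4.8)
The plan is to treat the three assertions of Proposition~\ref{ryled} — the forward direction, the converse together with its uniqueness clause, and the intertwining equivalence — in that order, the main external inputs being the spectral theorem for normal $d$-tuples and the Fuglede--Putnam theorem.

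For the forward direction, I would first observe that $\xi_1|_X,\dots,\xi_d|_X$ are pairwise commuting normal elements of $C(X)$, so applying the unital $*$-homomorphism $\pi$ produces pairwise commuting normal operators $T_j$; thus $\tbold=(T_j)_{j=1}^d$ is a normal $d$-tuple. To locate $\sigma(\tbold)$ and establish \eqref{dyfq}, the key step is to invoke the spectral theorem for commutative $C^*$-algebras (the Gelfand--Naimark and Riesz representation package recalled just before Lemma~\ref{pynsp}, now with $\mathfrak{M}_{C(X)}=X$): $\pi$ is given by $\pi(f)=\int_X f \D E$ for a unique regular spectral measure $E$ on $\borel(X)$. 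Extending $E$ by zero to $\borel(\comp^d)$ and specializing to $f=\xi_j|_X$ gives $T_j=\int_{\comp^d}\xi_j \D E$, so by uniqueness of the joint spectral measure in \eqref{tyjyt} we must have $E=E_{\tbold}$. Hence $\supp(E_{\tbold})\subseteq X$, and \eqref{dzer} yields $\sigma(\tbold)=\supp(E_{\tbold})\subseteq X$; the Stone--von Neumann calculus then gives $\pi(f)=\int_X f \D E_{\tbold}=\int_{\sigma(\tbold)} f \D E_{\tbold}=f(\tbold)$, which is \eqref{dyfq}.

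For the converse, I would start from a normal $d$-tuple $\tbold$ with $\sigma(\tbold)\subseteq X$ and use that the Stone--von Neumann calculus $f\mapsto f(\tbold)$ is a unital $*$-homomorphism of $C(X)$ into $\ogr(\hh)$; this makes the map $\pi$ defined by \eqref{dyfq} a representation, and $\pi(\xi_j|_X)=T_j$ follows from \eqref{tyjyt}. Uniqueness is then a Stone--Weierstrass argument: any representation $\pi'$ with $\pi'(\xi_j|_X)=T_j$ must agree with $\pi$ on every polynomial in $\xi_1|_X,\dots,\xi_d|_X,\bar\xi_1|_X,\dots,\bar\xi_d|_X$ (both being unital $*$-homomorphisms), these polynomials are dense in $C(X)$, and representations of $C^*$-algebras are contractive, so $\pi'=\pi$.

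For the intertwining equivalence, the implication ``$R\pi(f)=\rho(f)R$ for all $f\in C(X)$'' $\Rightarrow$ ``$RT_j=S_jR$ for $j=1,\dots,d$'' is immediate on taking $f=\xi_j|_X$. The converse carries the real content: given $RT_j=S_jR$ with each $T_j$ and each $S_j=\rho(\xi_j|_X)$ normal, the Fuglede--Putnam theorem upgrades this to $RT_j^*=S_j^*R$; hence $R$ intertwines every polynomial in $T_1,\dots,T_d,T_1^*,\dots,T_d^*$ with the corresponding polynomial in $S_1,\dots,S_d,S_1^*,\dots,S_d^*$, which — since $\pi$ and $\rho$ are $*$-homomorphisms — says precisely that $R\pi(q)=\rho(q)R$ for every $q$ in the dense $*$-subalgebra of $C(X)$ generated by $\xi_1|_X,\dots,\xi_d|_X$, and continuity of $\pi$ and $\rho$ extends it to all of $C(X)$. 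I expect the only genuinely non-routine points to be the identification $E=E_{\tbold}$ (and hence $\sigma(\tbold)\subseteq X$), which rests squarely on the uniqueness clause of the spectral theorem for normal $d$-tuples, together with the use of Fuglede--Putnam in the intertwining step; everything else is bookkeeping via Stone--Weierstrass and automatic continuity of $C^*$-algebra representations.
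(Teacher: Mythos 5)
Your proposal is correct, but in the forward direction it takes a different route from the paper. Where you invoke the spectral measure $E_\pi$ of the representation (the Gelfand--Naimark package recalled before Lemma~\ref{pynsp}, with $\mathfrak{M}_{C(X)}$ identified with $X$), extend it by zero to $\borel(\comp^d)$, and then use the \emph{uniqueness} clause of the joint spectral theorem \eqref{tyjyt} to conclude $E_\pi=E_{\tbold}$, hence $\supp(E_{\tbold})\subseteq X$ and \eqref{dyfq} in one stroke, the paper argues more by hand: it first shows $\pi(f|_X)=\int_{\sigma(\tbold)}f\,\D E_{\tbold}$ for all $f\in C(X\cup\sigma(\tbold))$ via polynomials and Stone--Weierstrass, and then excludes any $\lambdab\notin X$ from $\sigma(\tbold)$ by an Urysohn-lemma bump function $f_0$ vanishing on $X$ with $f_0(\lambdab)=1$, forcing $E_{\tbold}(\{f_0>0\})=0$. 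Your identification-of-measures argument is shorter and leans on the uniqueness statement for $E_{\tbold}$ (which the paper does assert in the Prerequisites, so this is legitimate; note your extended measure is compactly supported and automatically regular, so the uniqueness applies), while the paper's argument avoids appealing to the spectral measure of the representation altogether and works directly with polynomial approximation. The converse/uniqueness part (Stone--von Neumann calculus is a representation, $\pi(\xi_j|_X)=T_j$ from \eqref{tyjyt}--\eqref{dzer}, uniqueness by Stone--Weierstrass and continuity) and the intertwining equivalence (trivial forward direction; Fuglede--Putnam to get $RT_j^*=S_j^*R$, then polynomials, density, and continuity) are exactly the paper's arguments, merely written out in more detail than the paper's one-line citation of Stone--Weierstrass and Fuglede--Putnam.
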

   \begin{proof}
Let $\pi\colon C(X)\to\ogr(\hh)$ be a
representation, and set
$\tbold=(T_j)_{j=1}^d$, where
$T_j=\pi(\xi_j|_{X})$ for
$j=1,\dots,d$. Then, clearly, $\tbold$
is a normal $d$-tuple. We now show that
the Taylor spectrum of $\tbold$ is
contained in $X$. To see this, observe
that
   \begin{align*}
\pi(\xi_j|_{X}) = T_j
\overset{\eqref{tyjyt}}= \int_{\cbb^d}
\xi_j \D E_{\tbold}
\overset{\eqref{dzer}}=
\int_{\sigma(\tbold)} \xi_j \D
E_{\tbold}, \quad j=1, \ldots, d.
   \end{align*}
This implies that for every complex
polynomial $p$ in the variables $\xi_1,
\ldots, \xi_d$ and $\bar\xi_1, \ldots,
\bar \xi_d$, we have
   \begin{align*}
\pi(p|_{X}) = \int_{\sigma(\tbold)} p
\, \D E_{\tbold}.
   \end{align*}
Hence, by the Stone-Weierstrass
theorem, we obtain
   \begin{align} \label{dqal}
\pi(f|_{X}) = \int_{\sigma(\tbold)} f\,
\D E_{\tbold}, \quad f \in C(X \cup
\sigma(\tbold)).
   \end{align}
Now, take $\lambdab \in \cbb^d\setminus
X$. Then, by Urysohn's lemma, there
exists an open set $V\subseteq \cbb^d$
and a continuous function $f_0 \colon
\cbb^d \to [0,\infty)$ such that
$f_0(\lambdab)=1$, $\lambdab \in V$,
$V\cap X=\emptyset$, and $\overline{\{z
\in \cbb^d\colon f_0(z) > 0\}}
\subseteq V$. Since $f_0|_{X}=0$, we
infer from \eqref{dzer} and
\eqref{dqal} that $E_{\tbold}(\{z\in
\cbb^d\colon f_0(z) > 0\})=0$, or
equivalently, that $\supp E_{\tbold}
\subseteq f_0^{-1}(\{0\})$. But since
$f_0(\lambdab)>0$, it follows from
\eqref{dzer} that $\lambdab \in \cbb^d
\setminus \sigma(\tbold)$. This shows
that $\sigma(\tbold) \subseteq X$.

Our next goal is to prove that
\eqref{dyfq} is valid. Using
\eqref{tyjyt} and \eqref{dzer}, we see
that equality \eqref{dyfq} holds for
$f=\xi_j|_{X}$ for each $j=1, \ldots,
d$. Since, by the Stone-Weierstrass
theorem, the set $\{\xi_1|_{X}, \ldots,
\xi_d|_{X}\}$ generates the
$C^*$-algebra $C(X)$, and since the
right-hand side of \eqref{dyfq} defines
a representation of $C(X)$, we conclude
that equality \eqref{dyfq} holds for
all $f\in C(X)$.

Now we prove the ``converse''
implication. It is well-known that the
Stone-von Neumann calculus
$C(\sigma(\tbold)) \in f \longmapsto
f(\tbold) \in \ogr(\hh)$ is a
representation, and consequently so is
$\pi$ defined by \eqref{dyfq}.
Moreover, we have
   \begin{align*}
\pi(\xi_j|_{X}) = \int_{\sigma(\tbold)}
\xi_j \, \D E_{\tbold} \overset{
\eqref{dzer}}= \int_{\cbb^d} \xi_j \,
\D E_{\tbold} \overset{\eqref{tyjyt}}=
T_j, \quad j = 1, \dots, d.
   \end{align*}
Arguing as in the previous paragraph,
we establish the uniqueness of the
representation $\pi$ satisfying
$\pi(\xi_j|_{X})=T_j$ for
$j=1,\dots,d$.

The ``moreover'' part follows from the
Stone-Weierstrass and Fuglede-Putnam
theorems. This completes the proof.
   \end{proof}
If $\pi$ and $\tbold$ are related as in
Proposition~\ref{ryled}, we say that
the representation $\pi$ is {\em
induced} by the normal $d$-tuple
$\tbold$.
   \subsection*{Acknowledgements}
The authors would like to thank Marcel Scherer for
inspiring discussions around Proposition~\ref{glmp}
and for providing Example~\ref{MScherer}.

   \bibliographystyle{amsalpha}

   \end{document}